\numberwithin{equation}{section}
\newcommand{\Dd}{{\mathcal{D}}}
\newcommand{\KK}{\mathbb{K}}
\newcommand{\Qq}{\mathbb{Q}}
\newcommand{\QQ}{\mathbb{Q}}
\newcommand{\Rr}{\mathbb{R}}
\newcommand{\RR}{\mathbb{R}}
\newcommand{\ZZ}{\mathbb{Z}}
\newcommand{\nt}{\operatorname{nt}}
\newcommand{\vol}{\operatorname{vol}}
\newcommand{\Exc}{\operatorname{Exc}}
\newcommand{\Aut}{\operatorname{Aut}}
\newcommand{\Nklt}{\operatorname{Nklt}}
\newcommand{\mld}{\operatorname{mld}}
\newcommand{\pet}{\operatorname{pet}}
\newcommand{\Supp}{\operatorname{Supp}}
\newcommand{\Diff}{\operatorname{Diff}}
\newcommand{\mult}{\operatorname{mult}}
\newtheorem{thm}{Theorem}[section]
\newtheorem{cor}[thm]{Corollary}
\newtheorem{lem}[thm]{Lemma}
\newtheorem{prop}[thm]{Proposition}
\newtheorem{claim}[thm]{Claim}
\theoremstyle{definition}
\newtheorem{defn}[thm]{Definition}
\newtheorem{ques}[thm]{Question}
\theoremstyle{definition}
\newtheorem{rem}[thm]{Remark}
\newtheorem{rmk}[thm]{Remark}
\newtheorem{ex}[thm]{Example}
\theoremstyle{definition}
\newcommand{\Gal}{\mathrm{Gal}}
\newcommand{\GL}{\mathrm{GL}}
\newcommand{\Hess}{\mathrm{Hess}}
\newcommand{\I}{\mathrm{I}}
\newcommand{\II}{\mathrm{II}}
\newcommand{\III}{\mathrm{III}}
\newcommand{\klt}{\mathrm{klt}}
\newcommand{\nklt}{\mathrm{nklt}}
\newcommand{\ord}{\mathrm{ord}}
\newcommand{\Proj}{\mathrm{Proj}}
\newcommand{\CC}{\mathbb{C}}
\newcommand{\FF}{\mathbb{F}}
\newcommand{\PP}{\mathbb{P}}
\newcommand{\sD}{\mathcal{D}}
\newcommand{\sO}{\mathcal{O}}
\newcommand{\sS}{\mathcal{S}}
\newcommand{\tS}{\widetilde{S}}
\tikzstyle{wbullet}=[circle, draw=black, fill=white, thick, inner sep=2pt, minimum size=1.5mm]
\tikzstyle{bbullet}=[circle, draw=black, fill=black, inner sep=2pt, minimum size=1.5mm]
\tikzstyle{rbullet}=[circle, draw=black, fill=red, thick, inner sep=2pt, minimum size=1.5mm]
\tikzset{
  symbol/.style={
    draw=none,
    every to/.append style={
      edge node={node [sloped, allow upside down, auto=false]{$#1$}}}
  }
}
\begin{document}

\title[The minimal volume of surfaces with non-empty non-klt locus]{The minimal volume of surfaces of log general type with non-empty non-klt locus}

\author{Jihao Liu and Wenfei Liu}

\address{Department of Mathematics, Northwestern University, 2033 Sheridan Rd, Evanston, IL 60208, USA}
\email{jliu@northwestern.edu}

\address{School of Mathematical Sciences, Xiamen University, Siming South Road 422, Xiamen, Fujian 361005, P. R. China}
\email{wliu@xmu.edu.cn}

\subjclass[2020]{14J29, 14B05, 14E30}
\keywords{stable surface, minimal volume, accumultation point, moduli space}
\date{\today}

\begin{abstract}
We show that the minimal volume of surfaces of log general type, with non-empty non-klt locus on the ample model, is $\frac{1}{825}$. Furthermore, the ample model $V$ achieving the minimal volume is determined uniquely up to isomorphism. The canonical embedding presents $V$ as a degree $86$ hypersurface of $\mathbb P(6,11,25,43)$. This motivates a one-parameter deformation of $V$ to klt stable surfaces within the weighted projective space. Consequently, we identify a \emph{complete} rational curve in the corresponding moduli space.

As an important application, we deduce that the smallest accumulation point of the set of volumes for projective log canonical surfaces equals $\frac{1}{825}$.
\end{abstract}

\maketitle
\tableofcontents

\section{Introduction}\label{sec: introduction}
The volume of a projective surface $X$ with log canonical (lc) singularities measures the growth rate of the pluri-canonical systems:
\[
h^0(X, mK_X) =\frac{\vol(K_X)}{2}m^2 + o(m^2),
\]
and it is a key invariant in the classification theory of both smooth and non-smooth surfaces. If $\vol(K_X)>0$, then we say that $K_X$ is \emph{big}, and $X$ is of \emph{log general type}. The term ``log" indicates that $X$ might have non-canonical singularities, and the terminology generalizes easily to the case of pairs. By the existence of ample models for surfaces of log general type, there is no loss of generality to assume that $K_X$ is ample instead of big. In other words, we may assume that $X$ is a \emph{stable surface}.

Let $\sS$ be the set of projective log canonical surfaces with ample $K_X$. It is a fundamental result of Alexeev \cite{Ale94} that the volume set
$$\KK^2:=\{K_X^2\mid X\in \sS\}$$
satisfies the descending chain condition (DCC)\footnote{This result was expanded to all dimensions in \cite{HMX14}.}. Equivalently, any non-empty subset of $\KK^2$ attains the minimum. A more general version of Alexeev's result includes surfaces with boundary $\Rr$-divisors whose coefficients come from a fixed DCC set. In this paper, we will encounter log canonical surface pairs $(X,B)$ where $B$ is a reduced divisor.

For the sake of classification, it is an interesting problem
to determine the minimal volumes of various naturally appearing classes of stable surfaces. The second author showed that the smallest volume of normal stable surfaces with positive geometric genus $h^0(X,K_X)$ is $\frac{1}{143}$ (see \cite{Liu25}). However, by existing examples, the absolute minimum of $\KK^2$ is attained by a surface with vanishing geometric genus. So far, the smallest known volume is $\frac{1}{48983}$, found by Alexeev and the second author \cite{AL19a}, and later in \cite{Tot23} using a different method. On the other hand, the current proved lower bound for $\KK^2$ is:
\[
\min\KK^2\geq \frac{1}{42\cdot 84^{128 \cdot 42^5+168}}.
\]
This value is much smaller than the conjectured lower bound $\frac{1}{48983}$.

Another feature of $\KK^2$ is that it has accumulation points, which appear as the volumes of surfaces with accessible non-klt centers (see \cite{AL19b}). Given this, it is beneficial to partition $\sS$ into two subsets based on the nature of the singularities. Specifically, we can write
\begin{equation}\label{eq: decompose S}
    \sS= \sS_{\klt}\sqcup \sS_{\nklt},
\end{equation}
where
\[
  \sS_{\klt}:= \{X\in \sS\mid X \text{ is klt}\}
 \text{ and }
 \sS_{\nklt}:= \{X\in \sS\mid X \text{ is not klt} \}.
\]
The respective volume sets are then denoted by
\[
\KK^2_{\klt}:=\{K_X^2\mid X\in  \sS_{\klt} \}  \text{ and }  \KK^2_{\nklt}:=\{K_X^2\mid X\in  \sS_{\nklt} \}.
\]
A primary objective of this paper is to determine the minimum value in $\KK_\nklt^2$:
\begin{thm}\label{thm: 825}
The value $\min\KK_\nklt^2$ is $\frac{1}{825}$, and the surface that achieves this minimum is uniquely determined up to isomorphism, as in Example \ref{ex: lc surface smallest volume}.
\end{thm}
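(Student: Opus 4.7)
\emph{Upper bound.} For the upper bound $\min\KK_\nklt^2 \leq \frac{1}{825}$, let $V\subset \mathbb{P}(6,11,25,43)$ be the degree $86$ surface constructed in Example \ref{ex: lc surface smallest volume}. A direct computation gives $K_V = \mathcal{O}_V(86 - 6 - 11 - 25 - 43) = \mathcal{O}_V(1)$ and
\[
K_V^2 = \frac{86}{6\cdot 11\cdot 25\cdot 43} = \frac{1}{825}.
\]
It then remains to verify that $V$ is lc with $K_V$ ample and that the non-klt locus of $V$ is non-empty; this is a local computation at the four coordinate vertices of the weighted projective space, whose cyclic quotient structure is dictated by the weights $(6,11,25,43)$.

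\emph{Lower bound.} Given $X\in\sS_\nklt$, put $W := \Nklt(X)$ and split into cases. If $\dim W = 1$, pick an irreducible curve $C\subseteq W$ so that $(X,C)$ is lc. Adjunction yields
\[
(K_X + C)\cdot C = \deg\bigl(K_{\tilde C} + \Diff_{\tilde C}(0)\bigr),
\]
where $\tilde C$ is the normalization of $C$ and $\Diff_{\tilde C}(0)$ is effective with coefficients in a standard DCC set determined by the indices of the klt singularities of $X$ lying on $C$. Combined with the ampleness of $K_X$ and the Hodge index inequality $K_X^2\cdot C^2 \leq (K_X\cdot C)^2$, this constrains $K_X^2$ in terms of a finite list of discrete invariants: the arithmetic genus of $\tilde C$, the coefficients of $\Diff_{\tilde C}(0)$, and the klt singularity data of $X$ off $C$. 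The minimum over all admissible data turns out to be $\frac{1}{825}$. If $\dim W = 0$, run a divisorial extraction $\pi\colon Y\to X$ producing, at each isolated non-klt point $p$, a unique exceptional prime divisor $E_p$ with log discrepancy $a(E_p,X) = 0$; setting $E := \sum E_p$, we have $K_Y + E = \pi^*K_X$ and $E$ lies in the non-klt locus of $(Y, E)$, reducing to the curve case applied to $(Y, E)$.

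\emph{Uniqueness.} In the equality case $K_X^2 = \frac{1}{825}$, every inequality in the lower-bound argument must saturate. This pins down the arithmetic genus and singular type of $C$, the exact support and coefficients of $\Diff_{\tilde C}(0)$, and the list of klt cyclic quotient singularities of $X$ away from $C$. Solving the resulting finite Diophantine system produces a unique numerical type, and from its canonical graded ring one reconstructs $X$ as the degree $86$ weighted hypersurface in $\mathbb{P}(6,11,25,43)$, yielding the asserted uniqueness up to isomorphism.

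\emph{Main obstacle.} The hard part is the discrete optimization in the lower bound: enumerating tuples (genus of $\tilde C$, coefficients of $\Diff_{\tilde C}(0)$, klt singularity data of $X\setminus C$) compatible with the adjunction-plus-Hodge-index system, and verifying that $\frac{1}{825}$ is the minimum, attained by a unique configuration. The DCC coefficient set is infinite, but the boundedness results of \cite{Ale94} and \cite{HMX14} reduce attention to finitely many cases near the extremum; nevertheless, extracting the sharp bound $\frac{1}{825}$ and ruling out all near-optimal competitors requires a delicate case-by-case analysis, and is where essentially all of the work lies.
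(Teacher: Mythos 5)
The proposal has a genuine gap in the lower bound, and the case split at the start of that argument is also a red herring.

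First, a minor but telling structural issue: for a normal surface $X$ with $B=0$, the non-klt locus $\Nklt(X)$ is \emph{always} a finite set of points. Indeed, a prime divisor $C\subset X$ has log discrepancy $a(C,X,0)=1$, so it cannot be an lc place; hence every lc place is exceptional over $X$ and its center is a point. Your case $\dim W=1$ therefore never occurs. What is true, and what the paper does, is that after extracting the lc places via a partial resolution $f\colon Y\to X$ with $f^*K_X=K_Y+E$, the divisor $E$ becomes a reduced \emph{boundary} curve on $Y$, and one then reasons about the pair $(Y,E)$. This is close to your $\dim W=0$ reduction, but you say it ``reduces to the curve case applied to $(Y,E)$,'' and the curve case as you stated it (adjunction on a curve $C\subseteq\Nklt(X)$ plus Hodge index) is a different setup.

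The real gap is the lower bound itself. You assert that combining adjunction, $\Diff_{\tilde C}(0)$ with DCC coefficients, ampleness of $K_X$, and Hodge index ``constrains $K_X^2$'' and that ``the minimum over all admissible data turns out to be $\frac{1}{825}$,'' but you do not produce the mechanism by which the specific value $\frac{1}{825}$ drops out, and your ``Main obstacle'' paragraph explicitly concedes that this is where all the work lies. The paper's proof hinges on quantitative inputs your proposal does not invoke: Lemma~\ref{lem: e^2 and -1/3} giving $E^2\leq -\tfrac13$ (with the equality configuration classified), the pseudo-effective threshold gap of Theorem~\ref{thm: gap pet} ($\pet=\tfrac{12}{13}$ or $\leq\tfrac{10}{11}$), and the bound $(K_Z+E_Z)^2\geq\tfrac{1}{462}$ for reduced-boundary pairs (Theorem~\ref{thm: LS 462}). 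Feeding these into the identity
\[
K_X^2=(K_Y+tE)^2+(1-t)^2(-E^2)\geq \left(\frac{t-c}{1-c}\right)^2(K_Z+E_Z)^2+\frac{1}{3}(1-t)^2
\]
and optimizing the resulting quadratic in the nef threshold $t$ is exactly what isolates $\frac{1}{825}$ (at $t=\tfrac{24}{25}$). A Hodge-index-type bound $K_X^2 C^2\leq (K_X\cdot C)^2$ does not recover this: the bound is only useful when $C^2>0$, and even then one has no a priori control on $K_X\cdot C$ without the threshold machinery. Similarly, your uniqueness sketch (``solving the resulting finite Diophantine system'') omits the essential step: the paper needs to show that $(Z,E_Z)$ is the unique pair $(U,B_U)$ attaining $\tfrac{1}{462}$, and this requires Theorem~\ref{thm: 462}, which in turn rests on the classification of the K3 surface with a non-symplectic order-11 automorphism (Lemma~\ref{lem: K3}), together with Lemma~\ref{lem: index of positive contraction} to control the order of singular points under the $(K_Y+E)$-positive contraction $g\colon Y\to Z$. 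None of this is visible in the proposal.

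The upper bound paragraph is fine and matches the paper: the adjunction $K_V=\sO_V(1)$ on the degree-$86$ hypersurface in $\PP(6,11,25,43)$ gives $K_V^2=\tfrac{1}{825}$, and the local analysis at the coordinate points shows $V$ is lc with one non-klt point.
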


The proof for the lower bound $\min\KK_\nklt^2 \geq\frac{1}{825}$ draws on the recent results of \cite{LS23} and the techniques used therein. The characterization of the surface achieving the minimum, however, relies on the construction from \cite{AL19a} and a detailed classification of K3 surfaces with a non-symplectic automorphism of order $11$ \cite{OZ00, AST11, OZ11}.

The surface discussed in Example \ref{ex: lc surface smallest volume} has an accessible non-klt center, as defined in \cite[Definition 2.3]{AL19b}. This implies that its volume $\frac{1}{825}$ is an accumulation point of $\KK^2$. On the other hand, every accumulation point of $\KK^2$ is realized as the volume of a normal stable surface with non-empty non-klt locus, as stated in \cite[Theorem 1.1]{AL19b}. Consequently, we deduce the following:
\begin{cor}\label{cor: min acc}
The smallest accumulation point of $\KK^2$ is $\frac{1}{825}$.
\end{cor}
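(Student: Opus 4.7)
The plan is to prove the corollary as a direct consequence of Theorem \ref{thm: 825} together with \cite[Theorem 1.1]{AL19b}, by establishing two inequalities: that $\frac{1}{825}$ is itself an accumulation point of $\KK^2$, and that no smaller accumulation point exists.

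For the first inequality, I would take the unique minimizing surface $V$ produced by Theorem \ref{thm: 825}, namely the stable surface of Example \ref{ex: lc surface smallest volume} with $K_V^2=\frac{1}{825}$. As highlighted in the introduction, this $V$ carries an accessible non-klt center in the sense of \cite[Definition 2.3]{AL19b}. The existence of such a center is precisely the input that, through the perturbation/degeneration machinery of \cite{AL19b}, yields a sequence of pairwise non-isomorphic stable surfaces whose volumes converge to $K_V^2$ while remaining distinct from it; this certifies that $\frac{1}{825}$ is an accumulation point of $\KK^2$.

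For the reverse inequality, I would take an arbitrary accumulation point $v$ of $\KK^2$ and apply \cite[Theorem 1.1]{AL19b}, which realizes every accumulation point as $K_X^2$ for some normal stable surface $X$ whose non-klt locus is non-empty. Such an $X$ lies in $\sS_\nklt$, so $v\in\KK_\nklt^2$, and Theorem \ref{thm: 825} then gives $v\geq \min\KK_\nklt^2=\frac{1}{825}$. Combining the two bounds yields the claimed equality.

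The main obstacle does not lie within this corollary but is packaged into its two ingredients: Theorem \ref{thm: 825}, whose lower-bound half rests on the techniques of \cite{LS23} together with the delicate classification of K3 surfaces with an order-$11$ non-symplectic automorphism, and \cite[Theorem 1.1]{AL19b}, which identifies accumulation points with volumes of non-klt stable surfaces. Once both are available, the corollary is the short matching of inequalities above, with the only subtlety being the verification that the accessible non-klt center on $V$ genuinely produces an accumulating sequence rather than an isolated point; this is a direct application of the construction in \cite{AL19b} to the explicit $V$ of Example \ref{ex: lc surface smallest volume}.
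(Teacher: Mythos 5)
Your proposal matches the paper's argument exactly: the paper also derives the corollary by noting that $V$ has an accessible non-klt center (so $\frac{1}{825}$ is an accumulation point via \cite[Definition 2.3 and its consequences]{AL19b}), and that every accumulation point of $\KK^2$ is realized by a normal stable surface with non-empty non-klt locus (\cite[Theorem 1.1]{AL19b}), hence bounded below by $\min\KK^2_\nklt=\frac{1}{825}$ from Theorem \ref{thm: 825}. This is the same two-inequality matching you describe, so the proposal is correct and essentially identical to the paper's proof.
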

Previously, it was only known that the minimal accumulation point of $\KK^2$ lies between $\frac{1}{86436}$ and $\frac{1}{462}$ \cite[Theorem 1.7(iii)]{AL19b}.

\medskip

Due to Theorem \ref{thm: 825} and Corollary \ref{cor: min acc}, it becomes interesting to study the moduli space $M_{\frac{1}{825}}$ of stable surfaces with $K_X^2=\frac{1}{825}$. While Theorem~\ref{thm: 825} assures the uniqueness of the non-klt stable surface with $K_X^2=\frac{1}{825}$, there do exist klt stable surfaces with the same volume: Following the construction of the unique non-klt stable surface $V$ with minimal possible volume in Example \ref{ex: 825}, we manage to embed $V$ into the weighted projective space $\PP(6,11,25,43)$ as a hypersurface of degree $86$, and discover that $V$ deforms to klt hypersurfaces of the same degree. As a result, we identify a complete rational curve in $M_{\frac{1}{825}}$ that is homeomorphic to $\PP^1$. We refer the reader to Theorem~\ref{thm: WPS86} for details.

This phenomenon prompts the following question:
\begin{ques}
   Are there klt stable surfaces with volume $\frac{1}{825}$ that are not isomorphic to a hypersurface of $\PP(6,11,25,43)$?
\end{ques}

The paper is organized as follows. In Section \ref{sec: preliminaries}, we detail the basic notions and facts about the birational geometry and singularities of surfaces. We construct a useful partial resolution of a surface with non-klt singularities and control the order of a point on a plt surface pair under a $K$-positive contraction. Regarding the global geometry of projective surfaces, we demonstrate how to utilize pseudo-effective and nef thresholds to estimate the canonical volume. Moreover, we investigate a special configuration of $(-2)$-curves on a special K3 surface with a non-symplectic automorphism of order $11$ (\cite{OZ11, AST11}). This configuration proves important to the pivotal example, Example~\ref{ex: 825}. In Section \ref{sec: 462}, we build upon the results of \cite{LS23}, proving the uniqueness of the projective log canonical surface pair $(X,B)$ such that $K_X+B$ is ample, $B=\lfloor B\rfloor\neq 0$, and $(K_X+B)^2=\frac{1}{462}$. It is worth noting that this surface is closely connected to Example \ref{ex: 825}. Finally, in Section \ref{sec: 825}, we first construct the stable surface where $\Nklt(X)\neq \emptyset$ and $K_X^2=\frac{1}{825}$, and study its canonical embedding into $\PP(6,11,25,43)$. Then we present the proof of the main theorem, Theorem~\ref{thm: 825}, and provide several variants and applications.

\medskip

\noindent\textbf{Acknowledgement}. Part of this project began when the first author visited Xiamen University in April and June 2023. The first author wishes to extend his appreciation for the warm hospitality he received during his stay. He would also like to thank Vyacheslav V. Shokurov for invaluable discussions related to the subject matter. The second author is grateful for the enlightening conversations and collaboration with Valery Alexeev on the topic of singular surfaces with small volumes. He also extends his gratitude to De-Qi Zhang, who pointed out the reference \cite{OZ11} during his visit to the National University of Singapore in 2019. The first author was supported by the National Key R\&D Program of China (\#2024YFA1014400). The second author was supported by the NSFC (No.~11971399 and 12571046) and the Presidential Research Fund of Xiamen University (No.~20720210006).

\section{Preliminaries}\label{sec: preliminaries}

We work over the field of complex numbers $\mathbb C$. 

\subsection{Birational geometry and singularities of surfaces}
In this subsection, we revisit basic notions and facts related to birational geometry and the singularities of surfaces. Although many concepts are valid across all dimensions \cite{Sho92,KM98,BCHM10}, we confine our discussion to the case of surfaces for simplicity.

A \emph{surface} is defined as a connected quasi-projective variety of dimension $2$. Unless stated otherwise, it is always assumed to be normal. For a field $\FF$, which can be $\QQ$, or $\RR$, an \emph{$\FF$-divisor} $B$ on a surface is a finite formal sum $B=\sum_i b_iB_i$ of prime divisors with $\FF$-coefficients. We say that $B$ is \emph{effective} if $b_i\geq 0$ for every index $i$, and denote $B\geq 0$. The \emph{round-down} of $B$ is given by $\lfloor B\rfloor:=\sum_i \lfloor b_i\rfloor B_i$, with $\lfloor b_i\rfloor$ being the largest integer less than or equal to $b_i$. A surface pair $(X,B)$ consists of a surface $X$ and an $\RR$-divisor $B\geq 0$ on $X$ such that $K_X+B$ is $\RR$-Cartier, and we say that $B$ is the \emph{boundary} of $(X,B)$. A surface $X$ with an $\RR$-Cartier $K_X$ can also be interpreted as a surface pair $(X,0)$. A \emph{germ} of a surface pair $(X\ni x,B)$ consists of a surface pair $(X,B)$ and a closed point $x\in X$.

When selecting the canonical divisors of the surfaces under study, we ensure that for any birational morphism $f\colon Y\rightarrow X$, the relation $f_*K_Y=K_X$ holds. In particular, for any $\Rr$-divisor $B$ on $X$ such that $K_X+B$ is $\RR$-Cartier, there exists a  uniquely determined $\RR$-divisor $B_Y$ on $Y$ such that $K_Y+B_Y:=f^*(K_X+B)$ and $f_*B_Y=B$. 

Let $Y$ be a surface and $D$ an $\RR$-Cartier $\RR$-divisor on $Y$. A birational morphism  $f\colon Y\rightarrow X$ is called \emph{$D$-positive} (resp.~\emph{$D$-non-negative}, resp.~\emph{$D$-negative}) if $D\cdot C$ is positive  (resp.~non-negative, resp. negative) for any curve $C\subset Y$ contracted by $f$.

A birational morphism $f\colon Y\rightarrow X$ between surfaces is called a \emph{resolution} of $X$ if $Y$ is smooth. $f$ is called the \emph{minimal resolution} of $X$ if the exceptional locus of $f$ does not contain any $(-1)$-curves. It is well-known that any resolution of $X$ factors through the minimal resolution of $X$.

For a surface pair $(X,B)$, we call a birational morphism $f\colon Y\rightarrow X$ a \emph{log resolution} of $(X,B)$ if $\Exc(f)\cup f^{-1}_* B$ is a simple normal crossing, where $\Exc(f)$ denotes the exceptional locus of $f$.

Let $(X,B)$ be a surface pair. For a birational morphism $f\colon Y\rightarrow X$ and any prime divisor $E\subset Y$, we say that $E$ is \emph{over $X$}. We say that $c_X(E):=f(E)$ is the \emph{center} of $E$ on $X$, and say that $E$ is \emph{over} $c_X(E)$. The coefficient of $E$ in $B_Y$ is denoted by $\mult_EB_Y$, and $a(E,X,B):=1-\mult_EB_Y$ is called the \emph{log discrepancy} of $E$ with respect to $(X,B)$.  If $a(E,X,B)=0$, then $E$ is called an \emph{lc place} of $(X,B)$, and $c_X(E)$ is called an \emph{lc center} of $(X,B)$. The \emph{non-klt locus} $\Nklt(X,B)$ is the union of all lc centers of $(X,B)$.

Let $(X\ni x,B)$ be a surface germ. 
$$\mld(X\ni x,B):=\inf\{a(E,X,B)\mid E \text{ is a divisor over $x$}\}$$
is called the \emph{mld} of $(X, B)$ at $x$. We say that $(X\ni x, B)$ is 
\[
\begin{cases}
log\,\, canonical\,\, (lc) & \text{if $\mld(X\ni x, B)\geq 0$}\\
Kawamata\,\, log\,\, terminal\,\, (klt) & \text{if $\mld(X\ni x, B)> 0$ and $\lfloor B\rfloor$ does not pass through $x$}\\
purely\,\, log\,\, terminal\,\,(plt) & \text{if $\mld(X\ni x, B)> 0$}.
\end{cases}
\]
It is clear that 
\[
klt \Rightarrow plt \Rightarrow lc.
\]
By \cite[Section 4]{KM98}, if $(X\ni x,B)$ is plt, then $X\ni x$ is klt. If $(X\ni x,B)$ is lc and $x\in \Supp(B)$, then $X\ni x$ is klt. We say that $X\ni x$ is \emph{non-klt} if it is not klt. We say $(X, B)$ is lc (resp.~klt, resp.~plt) if $(X\ni x, B)$ is so for every closed point $x\in X$.

A surface pair $(X,B)$ is called \emph{dlt} if there exists a log resolution $f: Y\rightarrow X$ of $(X,B)$ such that $a(E,X,B)>0$ for any prime $f$-exceptional divisor $E$ on $Y$. For any lc surface pair $(X,B)$, a \emph{dlt modification} of $(X,B)$ is a birational morphism $f: Y\rightarrow X$ which only extracts divisors with log discrepancy $0$ with respect to $(X,B)$, and $(Y,B_Y)$ is dlt, where $K_Y+B_Y:=f^*(K_X+B)$. We recall that any lc surface pair has a dlt modification (cf.~\cite[Proposition 3.3.1]{HMX14}).

To classify lc surface singularities $(X\ni x, B)$, one needs to take the minimal resolution $f\colon Y\rightarrow X$ and looks at its (extended) dual graph, defined as follows. 

\begin{defn}[Dual graph]\label{defn: dual graph}
Let $C=\cup_{i=1}^nC_i$ a collection of irreducible projective curves on a smooth surface. We define the \emph{dual graph} $\Dd(C)$ of $C$ as follows.
\begin{enumerate}
    \item The vertices of $\Dd(C)$ are the curves $C_i$, which is usually decorated with the integer $-C_i^2$, or some other relevant data about the curve $C_i$ if necessary.
    \item For $i\neq j$, the vertices $C_i$ and $C_j$ are connected by $C_i\cdot C_j$ edges.
\end{enumerate}
For convenience, we allow $C$ to be an empty curve, and define $\Dd(\emptyset)=\emptyset$. 

We say that $\mathcal{D}(C)$ contains a \emph{cycle} if there exists $C_{1},\dots,C_{n}$ and edges $e_1, \dots, e_n$ ($n\geq 1$) such that $e_i$ connects $C_i$ and $C_{i+1}$, where $C_{n+1}$ is meant to be $C_1$. A connected graph $\mathcal{D}(C)$ is a \emph{tree} if it does not contain a cycle.

Suppose that $\Dd(C)$ is a tree. A vertex $C_i\in \Dd(C)$ is called a \emph{fork} of $\mathcal{D}(C)$ if $C_i\cdot C_j\geq 1$ for at least three different $j\not=i$. The tree $\Dd(C)$ is called a \emph{chain} if it does not have a fork. A vertex $C_i\in \Dd(C)$ is called a \emph{tail} if $C_i$ intersects $C_j$ for only one $j\not=i$. 

If $C\not=\emptyset$, we call the matrix $(C_i\cdot C_j)_{1\leq i,j\leq n}$ the \emph{intersection matrix} of $\mathcal{D}(C)$. The \emph{determinant} of $\mathcal{D}(C)$ is defined as
\[
\det(\mathcal{D}(C)):=
\begin{cases}
\det(-(C_i\cdot C_j)_{1\leq i,j\leq n}) & \text{if $C\not=\emptyset$} \\
1 & \text{if $C=\emptyset$}.
\end{cases}
\]

Let $(X\ni x,B)$ be a surface germ and $f: Y\rightarrow X$ the minimal resolution of $X$ near $x$. Then we call $\Dd(X\ni x):=\Dd(\Exc(f))$ the dual graph of $X\ni x$, and $\Dd(\Exc(f)\cup f^{-1}_*\Supp B)$ the \emph{extended dual graph} of $(X\ni x,B)$.
\end{defn}

For the purpose of this paper, we divide lc surface singularities into klt singularities and non-klt singularities. 
 We have the following important classification result (cf.~\cite[Theorems~4.7 and 4.15]{KM98} and \cite[3.39]{Kol13}):
\begin{enumerate}
\item A surface singularity $X\ni x$ is klt if and only if it is a quotient singularity, that is, analytically locally $(X\ni x) = (\CC^2\ni 0)/G$, where $G$ is a finite group acting linearly on $\CC^2$ and only the origin $0$ has a possibly non-trivial stabilizer (\cite[Theorem~4.18]{KM98}). In this case, the dual graph $\Dd(X\ni x)$ is a tree with at most one fork, and each vertex corrsponds to a smooth rational curve. Moreover, we have $|G|=\det\Dd(X\ni x)$, which is called the \emph{order} of $X\ni x$ and is denoted by $\ord(X\ni x)$. 
\item An lc surface singularity $(X\ni x, B)$ with $B=\lfloor B\rfloor$ is non-klt if either $B\neq 0$ and $\Dd(X\ni x, B)$ is as in \cite[Theorem~4.15]{KM98}, or $B=0$ and one of the following cases holds:
\begin{enumerate}
    \item $\sD(X\ni x)$ is a smooth elliptic curve or a rational curve with exactly one node. 
    \item $\sD(X\ni x)$ is a cycle of smooth rational curves $E_1,\dots,E_n$:
\begin{center}
    \begin{tikzpicture}[font=\tiny]
        \node[wbullet, label=right: $E_1$](E1)at (0:1){};
        \node[wbullet, label=above: $E_2$](E2)at (60:1){};
        \node[wbullet](E3)at (120:1){};
        \node[wbullet](En-1)at (240:1){};
        \node[wbullet,  label=below: $E_n$](En)at (300:1){};
        \draw (E1)--(E2)--(E3);
        \draw (E3)edge[dashed, bend right](En-1);
        \draw (En-1)--(En)--(E1);
    \end{tikzpicture}
\end{center}
such that $E_i^2\leq -2$ for any $1\leq i\leq n$ and there exists at least one $i$ satisfying that $E_i^2\leq -3$.
    \item $\sD(X\ni x)$ is of the following form
\begin{center}
\begin{tikzpicture}[font=\small]
    \node[wbullet, label=left: $E_1$](E1)at (-2, 0){};
\node[wbullet, label=below: $E_2$](E2)at (-1, 0){};
\node[wbullet, label=below: $E_{n-1}$](E3) at (1,0){};
\node[wbullet, label=right: $E_{n}$](E4)at (2,0){};
   \foreach \y in {-1,1} 
   \node[wbullet, label=left:$2$](A\y) at (-2, \y){};
   \foreach \y in {-1,1} 
   \node[wbullet, label=right:$2$](B\y) at (2, \y){};
   \draw (A-1)--(E1)--(A1);
      \draw (B-1)--(E4)--(B1);
      \draw (E1)--(E2);
      \draw[dashed](E2)--(E3);
      \draw (E3)--(E4);
\end{tikzpicture}
\end{center}
where $E_i^2\leq - 2$ for each $1\leq i\leq n$, and there exists at least one $i$ such that $E_i^2\leq -3$.
\item $\sD(X\ni x)$ is a tree with a unique fork $E_0$ and three branches $\Gamma_i$, $1\leq i\leq 3$:
\[
\begin{tikzpicture}
    \node[wbullet, label=below: $E_0$] (E0) at (0,0){};
    \node[draw, shape=ellipse] (Ga1) at (0,2){$\Gamma_1$};
        \node[draw, shape=ellipse] (Ga3) at (2, 0){$\Gamma_2$};
            \node[draw, shape=ellipse](Ga2) at (-2, 0){$\Gamma_3$};
    \draw (E0)--(Ga1);
    \draw (E0)--(Ga2);
    \draw (E0)--(Ga3);
\end{tikzpicture}
\]
\end{enumerate}
where, up to relabelling,
\[
\left(\det(\Gamma_1),\det(\Gamma_2), \det(\Gamma_3)\right)\in \{(2,3,6), \,\, (2,4,4),\,\, (3,3,3)\}.
\]
\end{enumerate}
We read from the above classification that, if $(X\ni x, B)$ with $B=\lfloor B\rfloor\neq 0$ passing through $x$ is plt, then $x$ is a cyclic quotient singularity of $X$ and a non-singular point of $B$, and $f^{-1}_*B$ is a tail of $\Dd(X\ni x, B)$.

For an lc surface singularity $X\ni x$, the divisors $E$ over $x$ which compute $\mld(X\ni x)$ play an important role. If $X\ni x$ is non-klt, then such divisors are exactly the lc places of $X$ over $x$. Usually, we extract these divisors using a birational morphism, aiming to enhance the singularities and thereby gaining flexibility to adjust the boundary divisor.
\begin{lem}\label{lem: e^2 and -1/3}
    Let $X\ni x$ be an lc surface germ that is not klt. Then there exists a birational morphism $f\colon Y\rightarrow X$ satisfying the following. Let $E$ be the reduced $f$-exceptional divisor. Then the following statements hold.
    \begin{enumerate}
    \item Any component of $E$ is an lc place of $X$.
    \item The center of any component of $E$ on $X$ is $x$.
    \item $Y$ is klt near $E$.
    \item $-E$ is ample$/X$. 
    \item $E^2\leq-\frac{1}{3}$, and if $\mathcal{D}(X\ni x)$ is not of the following two types in Table \ref{table: two special singularity}, then $E^2\leq -\frac{1}{2}$.
        \begin{center}
    \begin{longtable}{|c|c|}
 			\caption{The graphs of two special singularities with $\mld=0$}\label{table: two special singularity}\\
		\hline
		No.& $\mathcal{D}(X\ni x)$ \\ 
			\hline
   1 &
 \begin{tikzpicture}[font=\tiny, scale=.75]
 \node[wbullet, label=below:$2$] (E0) at (0,0){};
 \node[wbullet, label=below:$2$] (E1) at (-2,0){};
\node[wbullet, label=below:$2$] (E2) at (2,0){};
\node[wbullet, label=right:$3$] (E3) at (0,1){};
\draw (E0)--(E1)node[wbullet, pos=1/2, label=below:$2$](E01){};
\draw (E0)--(E2)node[wbullet, pos=1/2, label=below:$2$](E02){};
\draw (E0)--(E3);
\end{tikzpicture}
\\
    	\hline	
		   2 &
     \begin{tikzpicture}[font=\tiny, scale=.75]
 \node[wbullet, label=below:$2$] (E0) at (0,0){};
 \node[wbullet, label=below:$2$] (E1) at (-5,0){};
\node[wbullet, label=right:$2$] (E2) at (0,1){};
\node[wbullet, label=below:$3$] (E3) at (1,0){};
\draw (E0)--(E1)node[wbullet, pos=1/5, label=below:$2$](E01A){}node[wbullet, pos=2/5, label=below:$2$](E01B){}node[wbullet, pos=3/5, label=below:$2$](E01C){}node[wbullet, pos=4/5, label=below:$2$](E01D){};
\draw (E0)--(E2);
\draw (E0)--(E3);
\end{tikzpicture}
\\
    	\hline
		\end{longtable}
		\end{center}
    \item If $\mathcal{D}(X\ni x)$ is of one of the two types in Table \ref{table: two special singularity}, then $E$ is the image of the unique fork of $\mathcal{D}(X\ni x)$ and $E^2=-\frac{1}{3}$.

    \end{enumerate}
\end{lem}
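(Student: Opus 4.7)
The approach is to construct $f:Y\to X$ as a partial resolution of the minimal resolution $\mu:\tilde X\to X$, treating separately each of the four families (2)(a)--(d) of non-klt lc surface singularities classified just above the lemma. The first step is to identify the lc places in $\Exc(\mu)$: writing $K_{\tilde X}=\mu^*K_X+\sum a_i\tilde E_i$ and solving the linear system $M\vec a=\vec c$ (with $M$ the intersection matrix and $c_j=-2-\tilde E_j^2$), one checks that in (a) and (b) every $\mu$-exceptional component is an lc place, in (c) only the central chain $E_1,\dots,E_n$ is, and in (d) only the fork $E_0$ is.

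The morphism $f$ is obtained from $\mu$ by contracting two sorts of curves: first, all non-lc-place exceptional divisors (the tails in (c), the three branches $\Gamma_i$ in (d)), which form $(-2)$-chains or cyclic quotient chains and thus contract to Du Val or cyclic quotient singularities on an intermediate surface; second, in cases (b) and (c), any $(-2)$-runs of lc-place components lying between ``heavy'' components with self-intersection $\le -3$, which contract further to Du Val $A$-singularities on $Y$. The hypothesis that at least one component satisfies $E_i^2\le-3$ ensures that the surviving exceptional divisor $E$ is nonempty. By construction the components of $E$ are lc places with centre $x$ and $Y$ is klt near $E$, giving (1)--(3). Property (4) is verified via the pullback formula $g^*E_i=\tilde E_i+\sum c_\ell F_\ell$ with $c_\ell$ determined by the equations $g^*E_i\cdot F_\ell=0$; the second round of contractions is precisely what makes $-E\cdot E_i>0$ on each surviving $E_i$, so that $-E$ is $f$-ample.

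The last step is to compute $E^2$. Case (a) is immediate: $E$ is a single irreducible curve with $E^2\le -1$. In cases (b) and (c) a direct intersection calculation, summing the local Hirzebruch--Jung contributions $q/d$ of the contracted $(-2)$-chains, gives $E^2\le-\tfrac12$. In case (d), $E=E_0$ is irreducible and $E_0^2|_Y=\tilde E_0^2+\sum_{i=1}^3 q_i/d_i$, where $(d_i,q_i)$ is the Hirzebruch--Jung data of the branch $\Gamma_i$; the condition that $E_0$ be an lc place, combined with $(d_1,d_2,d_3)\in\{(2,3,6),(2,4,4),(3,3,3)\}$, leaves only finitely many admissible tuples $(\tilde E_0^2;d_i,q_i)$. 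These are enumerated directly: exactly two of them (the graphs in Table~\ref{table: two special singularity}) yield $E_0^2|_Y=-\tfrac13$, while all the others satisfy $E_0^2|_Y\le-\tfrac12$. The main technical obstacle is this case-(d) enumeration, where one must verify the sharp dichotomy $E_0^2\in\{-\tfrac13\}\cup(-\infty,-\tfrac12]$ by going through each continued-fraction configuration and confirming that only the two Table~\ref{table: two special singularity} graphs achieve the extremal value $-\tfrac13$.
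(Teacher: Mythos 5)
Your proposal is correct and follows essentially the same approach as the paper: pass from the minimal resolution to $Y$ by contracting the non-lc-place exceptional components together with the $(-2)$-curves among the lc places, verify properties (1)--(4) from the construction, and then estimate $E^2$ case by case through the classification (a)--(d). The paper's proof does streamline the arithmetic at two points that you may want to incorporate — in cases (a)--(c) it observes that only $(-2)$-curves get contracted by $g\colon\widetilde X\to Y$, so $g^*E=B_{\widetilde X}$ and hence $E^2=B_{\widetilde X}^2$ is a negative integer ($\leq -1$); and in case (d) it uses the coprimality of $q_i$ with $n_i$ together with $(n_1,n_2,n_3)\in\{(3,3,3),(2,4,4),(2,3,6)\}$ to conclude that $2E^2$ or $3E^2$ is always an integer, giving $E^2\leq -\tfrac{1}{3}$ at once and shrinking the enumeration needed to pin down the equality cases — but neither shortcut changes the underlying strategy.
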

\begin{proof}
Let $h\colon \widetilde X\rightarrow X$ be the minimal resolution of $X$ near $x$, and write $K_{\widetilde X}+B_{\widetilde X}=h^*K_X$. Let $E_i$ be the $h$-exceptional prime divisors and write $B_{\widetilde X}=\sum_i b_iE_i$. Let $g\colon {\widetilde X}\rightarrow Y$ be the contraction of all the $E_i$ with coefficient $b_i<1$ or with $K_{\widetilde X}\cdot E_i=0$, and let $f\colon Y\rightarrow X$ be the induced proper birational morphism. In fact, $g\colon {\widetilde X}\rightarrow Y$ simply gives the the ample model of $K_{\widetilde X}+B_{\widetilde X}'$ over $X$, where $B_{\widetilde X}'$ is the reduced divisor consisting of those components $E_i$ contracted by $g$. 

Setting $E=g_*B_{\widetilde X}$, it is quite clear that the properties (1)--(4) hold. For the properties (5) and (6), we use the classification of lc but not klt surface singularities given above. In the cases (a), (b), (c) of the classification, $g$ contracts exactly the $(-2)$-curves over $x$, and we have $B_{\widetilde X} = g^*E$; it follows that $E^2 = B_{\widetilde X}^2$, which is a negative integer and hence at most $-1$. Thus in these cases the property (5) holds and (6) is an empty statement.

In the case (d) of the classification, $\sD(X\ni x)$ is a tree with a unique fork $E_0$ and three branches $\Gamma_i = \cup_{j=1}^{r_i}E_{i,j}$, $1\leq i\leq 3$, where the indices are ordered so that for each $1\leq i\leq 3$,
\begin{itemize}
    \item $E_{i,j}$ intersects $E_{i',k}$ if and only if $|j-k|\leq 1$,
    \item $E_{i,1}$ intersects $E_0$.
\end{itemize}
We may write $\Gamma_i = E_{1,1}\cup \Gamma_i'$, where $\Gamma_i' = \cup_{j=2}^{r_i}E_{i,j}$ is a possibly empty subgraph of $\Gamma_i$. Then the dual graph $\sD(X\ni x)$ takes the following form
\[
\begin{tikzpicture}
    \node[wbullet, label=below: $E_0$] (E0) at (0,0){};
     \node[wbullet, label=right: $E_{1,1}$] (E1) at (0,1){};
    \node[draw, shape=ellipse] (Ga1) at (0,2){$\Gamma_1'$};
        \node[wbullet, label=below: $E_{2,1}$] (E2) at (1, 0){};
        \node[draw, shape=ellipse] (Ga2) at (2, 0){$\Gamma_2'$};
    \node[wbullet, label=below: $E_{3,1}$] (E3) at (-1, 0){};
    \node[draw, shape=ellipse](Ga3) at (-2, 0){$\Gamma_3'$};
    \draw (E0)--(E1)--(Ga1);
    \draw (E0)--(E2)--(Ga2);
    \draw (E0)--(E3)--(Ga3);
\end{tikzpicture}
\]
Let $n_i:=\det \Gamma_i$ and  
$q_i:=\det \Gamma_i'$. 
Then after reordering indices, we have
\[
(n_1,n_2,n_3)\in\{(3,3,3),(2,4,4),(2,3,6)\}.
\]
The three branches $\Gamma_i$ are contracted by $g$, and we have (cf.~\cite[Notation--Lemma~4.5]{LS23})
\[
E^2=E_0^2+\sum_{i=1}^3\frac{q_i}{n_i}.
\]
Since $(n_1,n_2,n_3)\in\{(3,3,3),(2,4,4),(2,3,6)\}$, and $q_i$ and $n_i$ are coprime, either $2E^2$ or $3E^2$ is an integer. Since $E^2<0$, we have $E^2\leq-\frac{1}{3}$. Moreover, it is easy to check that $E^2=-\frac{1}{3}$ if and only if $E_0^2=-2$, and possibly reordering indices,
$$(n_1,n_2,n_3,q_1,q_2,q_3)\in \{(3,3,3,1,2,2),(2,3,6,1,1,5)\}.$$
This implies the required properties (5) and (6) in this case.
\end{proof}

We control the order of a point on a plt surface pair under a $(K_X+B)$-positive contraction.
\begin{lem}\label{lem: index of positive contraction}
    Let $(X\ni x,B)$ be a germ of an lc surface pair such that $B$ is a reduced curve passing through $x$. Let $f\colon X\rightarrow Y$ be a proper birational morphism that is $(K_X+B)$-positive, $B_Y:=f_*B$, and $y:=f(x)$. Suppose that $(Y,B_Y)$ is plt. Let $p$ be the order of $X\ni x$ and $q$ the order of $Y\ni y$. Then the following holds.
    \begin{enumerate}
       \item $\ord(X\ni x)\leq \ord(Y\ni y)$, and equality holds if and only if $f$ is an isomorphism near $x$.
       \item If $\ord(X\ni x)= \ord(Y\ni y) + 1$, then $f^{-1}(y)$ is a smooth rational curve.
    \end{enumerate}
\end{lem}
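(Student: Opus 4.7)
The plan is first to deduce that $(X,B)$ is actually plt at $x$, and then to compare the two orders via adjunction along $B$. Writing the ramification formula $f^{*}(K_Y+B_Y)=K_X+B+F$, where $F$ is supported on the $f$-exceptional locus over $y$, the $(K_X+B)$-positivity of $f$ combined with the negative-definiteness of the intersection form on $f$-exceptional curves forces $F$ to be effective, with strictly positive coefficient $\beta_j:=1-a(C_j,Y,B_Y)\in(0,1)$ on each $f$-exceptional component $C_j$ mapping to $y$; the bound $\beta_j<1$ uses the plt hypothesis on $(Y,B_Y)$. For any divisor $E$ centered at $x$ one then has $a(E,X,B)=a(E,Y,B_Y)+\mult_E(\pi^{*}F)\ge a(E,Y,B_Y)>0$, so $(X,B)$ is plt at $x$. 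By the plt classification recalled above, $X\ni x$ is therefore a cyclic quotient singularity of order $p$, $B$ is smooth at $x$ and appears as the tail of the extended dual graph; similarly $Y\ni y$ is cyclic quotient of order $q$ with $B_Y$ smooth at $y$.

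Because $B$ and $B_Y$ are both smooth at the respective points, $f|_B\colon B\to B_Y$ is an isomorphism near $x$. By the standard formula for the different at a plt cyclic quotient, the different of $(K_X+B)|_B$ at $x$ has coefficient $\tfrac{p-1}{p}$, and that of $(K_Y+B_Y)|_{B_Y}$ at $y$ is $\tfrac{q-1}{q}$. Restricting $f^{*}(K_Y+B_Y)=K_X+B+F$ to $B$ and comparing coefficients at $x$ yields
\[
\mult_x(F|_B)=\frac{q-1}{q}-\frac{p-1}{p}=\frac{q-p}{pq}.
\]
On the other hand $\mult_x(F|_B)=\sum_{C_j\ni x}\beta_j(C_j\cdot B)_x$, a non-negative sum which vanishes precisely when no $C_j$ passes through $x$, equivalently when $f$ is an isomorphism near $x$. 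This gives $q\ge p$, with equality iff $f$ is an isomorphism near $x$, proving~(1). This also forces the natural reading of the inequality in part~(2) to be $\ord(Y\ni y)=\ord(X\ni x)+1$, which is the reading I adopt.

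For part~(2) I would factor $f$ as a composition $f=f_N\circ\cdots\circ f_1$ of single-curve contractions through normal projective intermediate surfaces $X_i$ with boundaries $B_i:=(f_i\circ\cdots\circ f_1)_{*}B$. A short intersection-theoretic computation shows that $(K_{X_{i-1}}+B_{i-1})$-positivity is preserved, and the plt-descent argument of the first paragraph applied to each tail morphism $X_i\to Y$ shows that $(X_i,B_i)$ is plt at the image $x_i$ of $x$. Part~(1) applied to each step then gives a strict increase $\ord(X_{i-1}\ni x_{i-1})<\ord(X_i\ni x_i)$ precisely when the contracted curve passes through $x_{i-1}$, and equality otherwise. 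Since the total increase equals $q-p=1$, exactly one step, say the $i^{*}$-th, contracts a curve through the image of $x$; the other $f_i$ are isomorphisms near $x_{i-1}$, so the contracted curve $D\subset X_{i^{*}-1}$ pulls back to a single irreducible curve in $X$ (still denoted $D$), and it is the unique $f$-exceptional curve over $y$ in a neighborhood of $x$, so $f^{-1}(y)=D$ locally at $x$. To see that $D$ is smooth rational, pass to the minimal resolution $\pi_X\colon\widetilde X\to X$; since $Y\ni y$ is a rational (cyclic quotient) singularity, the exceptional dual graph of $\widetilde X\to Y$ over $y$ is a tree of smooth $\Pp^1$'s with pairwise intersection numbers at most one, so the strict transform $\widetilde D\subset\widetilde X$ is a smooth $\Pp^1$ meeting the $\pi_X$-exceptional chain $G_1\cup\cdots\cup G_m$ in exactly one point (two or more attachment points would create a cycle in the tree). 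Consequently $\pi_X|_{\widetilde D}$ is an isomorphism and $D\cong\widetilde D\cong\Pp^1$.

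The step I expect to be the main obstacle is the plt-propagation together with the preservation of $(K+B)$-positivity through the intermediate surfaces $X_i$ in the factorisation. It amounts to bookkeeping with the ramification formula and negative-definiteness, but must be done carefully; once this is in place, both the adjunction identity behind~(1) and the tree-combinatorics argument for the smoothness of $D$ are essentially formal.
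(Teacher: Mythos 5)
Your proof of part (1) is correct but takes a slightly different route from the paper. You show $(X,B)$ is plt, then use the adjunction formula for the different at a plt cyclic quotient singularity, restricting $f^{*}(K_Y+B_Y)=K_X+B+F$ to $B$ to compute $\mult_x(F|_B)=\tfrac{q-p}{pq}$ directly. The paper instead quotes \cite[Lemma 7.5]{LS23}, which states $\mld(X\ni x,B)=\tfrac{1}{\ord(X\ni x)}$, and reads off the inequality from the effectivity of $F$. The two are essentially equivalent (the different coefficient $\tfrac{p-1}{p}$ and $\mld=\tfrac1p$ are two sides of the same adjunction computation), but your version of the equality characterization is more explicit. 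You also correctly spotted the typo in the statement of (2); the intended reading is $\ord(Y\ni y)=\ord(X\ni x)+1$.

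For part (2), however, there is a genuine gap. Your factorization $f=f_N\circ\cdots\circ f_1$ together with the observation that the orders can increase at only one step $i^{*}$ establishes that $D$ is the \emph{unique} component of $f^{-1}(y)$ passing through $x$ — you explicitly conclude ``$f^{-1}(y)=D$ locally at $x$'' — but the lemma asserts that $f^{-1}(y)$ is \emph{globally} a single smooth rational curve. Nothing in your argument excludes additional components of $f^{-1}(y)$ that do not meet $x$: in a factorization where such a component is contracted \emph{before} $D$, its contraction is an isomorphism near the image of $x$, so it does not contribute a second order-increase and no contradiction arises. The paper closes exactly this loophole using the connectedness of $f^{-1}(y)$: if there were a second component, one can choose $E_1\ni x$ and $E_2$ adjacent to $E_1$, contract $E_1$ first so that the image of $E_2$ passes through $h(x)$, and then apply part (1) twice to get $\ord(Y\ni y)\ge\ord(X\ni x)+2$, a contradiction. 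To repair your argument you would need to fix the order of the factorization (contract $D$ at the first step) and then invoke connectedness to force any remaining component adjacent to $D$ to pass through the image of $x$, producing a second step of strict increase. Once irreducibility is known, your tree-combinatorics argument on the minimal resolution for the smoothness and rationality of $D$ is fine, and is a reasonable expansion of the paper's terse remark that $f^{-1}(y)$ consists of smooth rational curves because $Y\ni y$ is a quotient (hence rational) singularity.
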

\begin{proof}
Since $f$ is $(K_X+B)$-positive, by the negativity lemma (\cite[Lemma~3.39]{KM98}), $f^*(K_Y+B_Y) - (K_X+B)$ is an effective divisor with support $\Exc(f)$. Since $(Y,B_Y)$ is plt, we infer that $(X,B)$ is also plt. By \cite[Lemma 7.5]{LS23}, $$\mld(X\ni x,B)=\frac{1}{\ord(X\ni x)} \text{ and } \mld(Y\ni y,B_Y)=\frac{1}{\ord(Y\ni y)}.$$ We obtain (1) by the effectivity of $f^*(K_Y+B_Y) - (K_X+B)$.

 Since $(Y,B_Y)$ is plt, $f^{-1}(y)$ consists of smooth rational curves in any case. Suppose on the contrary that $f^{-1}(y)$ contains $2$ curves, say $E_1,E_2$. Then we may assume that $E_1$ passes through $x$ and $E_2$ intersects $E_1$. We let $h: X\rightarrow W$ be the contraction of $E_1$, $w:=h(x),B_W:=h_*B$, and let $g: W\rightarrow Y$ be the induced contraction:
    \[
\begin{tikzcd}
    (X\owns x, B)\arrow[rd, "f"']\arrow[rr, "h"] & & (W\owns w, B_W)\arrow[ld, "g"] \\
    & (Y\owns y, B_Y) &
\end{tikzcd}
\]
Since $(Y,B_Y)$ is plt, $(W,B_W)$ is plt. Since $h$ is not an isomorphism near $x$ and $g$ is not an isomorphism near $w$, by (1), we have
    $\ord(X\ni x)< \ord(W\ni w) <\ord(Y\ni y)$
    so $\ord(Y\ni y)\geq \ord(X\ni x)+2$, which is a contradiction to the assumption.
\end{proof}

\subsection{Volumes}
Now we turn to the global geometry of projective surfaces.

\begin{defn}[Volume]
 Let $X$ be a projective surface, and $D$ an effective $\RR$-Cartier $\Rr$-divisor on $X$. Then \emph{volume} of $D$ is defined as
\[
\vol(D):={\limsup}_{m\rightarrow+\infty}\frac{h^0(X,\lfloor mD\rfloor)}{m^{2}/2}.
\]
The $\RR$-divisor $D$ is said to be \emph{big} if $\vol(D)>0$. The volume only depends only on the numerical class of $D$ and thus defines a function on the space  $N^1(X)_\RR$ of numerically classes of $\RR$-Cartier $\RR$-divisors; it turns out to be a continuous function. Big $\RR$-divisors form an open cone in $N^1(X)_\RR$, called the \emph{big cone} of $X$. An $\RR$-Cartier $\RR$-divisor $D$ is called \emph{pseudo-effective}, if its numerical class lies in the closure of the big cone; it is called \emph{nef}, if $D\cdot C\geq 0$ for any curve $C\subset X$. Pseudo-effective (resp.~nef) $\RR$-divisors are limits of big (resp.~ample) $\RR$-divisors. By the Riemann--Roch theorem, if $D$ is nef, then $\vol(D)=D^2$. We refer to \cite{Laz04} for further general properties of volumes. 

\end{defn}
We are mainly interested in the volumes of log canonical divisors $K_X+B$ of surface pairs $(X, B)$. For simplicity, we call $\vol(K_X+B)$ the \emph{(canonical) volume} of $(X, B)$. The surface pair $(X, B)$ is called \emph{of log general type} if $\vol(K_X+B)>0$. By the minimal model program for projective surfaces (cf.~\cite{Fuj21}), for any lc surface pair $(X, B)$ of log general type, there is a unique birational morphism $\pi\colon X\rightarrow \bar X$, such that $(\bar X,\bar B = \pi_*B)$ is lc, $K_{\bar X}+\bar B$ is ample, and $K_X+B\geq \pi^*(K_{\bar X}+\bar B)$. $\pi$ is called the \emph{ample model} of $K_X+B$ or of $(X, B)$. It is easy to see that 
\begin{equation}\label{eq: ample model}
    \vol(K_X+B) = \vol(K_{\bar X}+\bar B)=(K_{\bar X}+\bar B)^2 = (\pi^*(K_{\bar X}+\bar B))^2.
\end{equation}
Indeed, $\pi^*(K_{\bar X}+\bar B)$ is the positive part of $K_X+B$ in the Zariski decomposition, and \eqref{eq: ample model} is an important instrument to compute the volumes in practice. Thus, from the point of view of volumes, we may assume that $K_X+B$ is ample instead of big. In other words, we may assume that $(X, B)$ is a \emph{stable surface pair}. In general, stable pairs admit possibly non-normal singularities, but the ones encountered in this paper are automatically normal (see Corollary~\ref{cor: normal}). We refer to \cite{Kol23} for the moduli theory of stable pairs.

To bound the volume of a surface pair of log general type from below, we employ two invariants: the nef threshold and the pseudo-effective threshold.

\begin{defn}\label{defn: pet}
Let $(X,B)$ be a projective lc surface pair and $D\geq 0$ a non-zero $\Rr$-Cartier $\Rr$-divisor on $X$.
\begin{itemize}
    \item If $K_X+B+D$ is pseudo-effective, then the \emph{pseudo-effective threshold} of $D$ with respect to $(X,B)$ is
$$\pet(X,B;D):=\inf\{t\geq 0\mid (X,B+tD)\text{ is lc},  K_X+B+tD\text{ is pseudo-effective}\}.$$
\item If $K_X+B+D$ is nef, then the \emph{nef threshold} of $D$ with respect to $(X,B)$ is
$$\nt(X,B;D):=\inf\{t\geq 0\mid (X,B+tD)\text{ is lc},  K_X+B+tD\text{ is nef}\}.$$
\end{itemize}
\end{defn}
Both $\pet(X,B;D)$ and $\nt(X,B;D)$ are non-negative real numbers. When both quantities are defined, it holds that $\pet(X,B;D)\leq\nt(X,B;D)$. A key property we leverage in this paper is the existence of gaps among the pseudo-effective thresholds. The recent discovery of explicit gaps in these thresholds by the first author and Shokurov proves crucial to our work here.
\begin{thm}\label{thm: gap pet}
Let $(X,A+B)$ be a projective lc surface pair such that $A$ and $B$ are reduced, $B\neq 0$, and $K_X+A+B$ is big and nef. Then $\pet(X, A; B)=\frac{12}{13}$ or $\leq \frac{10}{11}$. Moreover, if $\pet(X, A; B)=\frac{12}{13}$, then $A=0$ and $B$ is a smooth rational curve containing only cyclic quotient singularities of $X$ of order $\leq 5$.
\end{thm}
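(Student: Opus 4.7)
Let $t_0 := \pet(X,A;B)$. Since $(X,A+B)$ is lc, the pair $(X,A+tB)$ remains lc for every $t\in[0,1]$, so $t_0$ is governed purely by the pseudo-effectivity condition. Because $K_X+A+B$ is big and nef, $t_0\leq 1$, and $K_X+A+t_0 B$ lies on the boundary of the pseudo-effective cone, so $\vol(K_X+A+t_0 B)=0$. The plan is to translate this boundary condition via adjunction into a Diophantine identity, and then exploit gaps in the achievable values to force $t_0\in\{\tfrac{12}{13}\}\cup[0,\tfrac{10}{11}]$.

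First I would run a $(K_X+A+t_0 B)$-MMP. By log abundance in dimension two and $\vol(K_X+A+t_0 B)=0$, one reaches a model on which $K+A+t_0 B$ is semi-ample of Iitaka dimension $0$ or $1$; and since every step is $(K_X+A+B)$-trivial on its contracted locus (by negativity), the bigness and nefness of $K_X+A+B$ survive and $t_0$ is unchanged. For the principal estimate one may then assume $K_X+A+t_0 B\equiv 0$ on $X$ itself (the Iitaka-$1$ case is handled in parallel, testing against a general fibre).

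With $K_X+A+t_0 B\equiv 0$, intersecting with a component $C$ of $B$ gives
\[
(K_X+A+B)\cdot C \;=\; (1-t_0)\,B\cdot C .
\]
Applying adjunction to $C$, and using that a reduced boundary component of a plt pair passes only through cyclic quotient singularities of $X$ (by the classification recalled in Section \ref{sec: preliminaries}), the left-hand side equals
\[
2p_a(C)-2 \;+\; A\cdot C \;+\; (B-C)\cdot C \;+\; \sum_{x\in C}\Bigl(1-\tfrac{1}{n_x}\Bigr),
\]
where the $n_x$ are the orders of those cyclic quotient singularities. This yields a rational identity for $1-t_0$ whose denominator is controlled by the $n_x$ and by $B\cdot C$. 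Running $C$ over all components of $B$ produces a finite system of such identities.

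The crux is then a case analysis under the hypothesis $1-t_0<\tfrac{1}{11}$, which forces the denominators above to be small. Enumerating the admissible combinations of $(p_a(C),\{n_x\},A\cdot C,\text{components of }B)$ and using the structural results of \cite{LS23} on dual graphs of lc surface singularities, one shows that the only value realizable in $(\tfrac{10}{11},1)$ is $t_0=\tfrac{12}{13}$, with $1-t_0=\tfrac{1}{13}$. At this extremum the identity forces $p_a(C)=0$, $A\cdot C=0$, $B=C$ irreducible, and the sum $\sum(1-1/n_x)$ attains a value that is only achievable if every $n_x\leq 5$. Upgrading $A\cdot C=0$ to $A=0$ follows because $K_X+A+t_0 B\equiv 0$ together with $A\cdot C\geq 0$ for all such $C$ and the fact that $K_X+A+B$ is big forces $A$ to be numerically trivial, hence zero. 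I expect the main obstacle to be ruling out numerically tempting ``near-miss'' values such as $\tfrac{11}{12}$ or $\tfrac{21}{22}$: this elimination requires the refined dual-graph combinatorics of \cite{LS23} rather than the raw adjunction identity, while the MMP reduction and the bookkeeping of the extremal case are comparatively routine.
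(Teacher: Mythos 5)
Your proposal attempts a from-scratch derivation of the result, whereas the paper's proof is essentially a one-paragraph reduction: it passes to a dlt modification $f\colon Y\to X$ of $(X,A+B)$, observes that the pseudo-effective threshold is unchanged and that $(Y,A_Y+B^Y)$ is $\Qq$-factorial dlt, and then invokes \cite[Theorem~7.7]{LS23} directly (which is proved in the dlt setting). In the case $\pet=\frac{12}{13}$, the cited theorem yields $A_Y=0$; since $A_Y$ contains all $f$-exceptional divisors, $f$ must be the identity, giving the conclusion on $X$ itself. The key observation you miss is that the theorem is \emph{already} the content of the cited result, modulo relaxing the dlt hypothesis, and a dlt modification (which extracts lc places rather than contracting curves) is the cheapest way to effect that relaxation.

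Beyond the difference of route, there is a genuine gap. What you call the ``crux''---the elimination of candidate values of $t_0$ in the interval $(\frac{10}{11},1)\setminus\{\frac{12}{13}\}$ via a Diophantine/dual-graph analysis---is precisely the nontrivial combinatorial content of \cite[Theorem~7.7]{LS23}, and you explicitly state that you are not carrying it out. A sketch that gestures at ``enumerating the admissible combinations'' and ``the refined dual-graph combinatorics of \cite{LS23}'' is not a proof of the dichotomy; it is a restatement that one would need the cited theorem. Put differently, once you commit to \emph{not} simply citing \cite[Theorem~7.7]{LS23}, the burden of that enumeration is yours, and it is substantial (it occupies a significant portion of \cite{LS23}). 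Your MMP reduction is broadly sound for surfaces (no flips, and one can check nefness/bigness of $K_X+A+B$ survives divisorial contractions and $t_0$ is preserved), though the phrase ``every step is $(K_X+A+B)$-trivial on its contracted locus (by negativity)'' is not accurate: the contracted curve can meet $K_X+A+B$ positively; what actually holds is $K_X+A+B\le f^*(K_{X'}+A'+B')$ by the negativity lemma.

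There is also an unjustified step in your extremal analysis. You claim that $A\cdot C=0$ for components $C$ of $B$, together with $K_X+A+t_0B\equiv 0$ and bigness of $K_X+A+B$, ``forces $A$ to be numerically trivial, hence zero.'' This does not follow as stated: $A\cdot C=0$ constrains $A$ only against components of $B$, and nothing in the chain of displayed inequalities controls $A\cdot A_i$ for components $A_i$ of $A$ or precludes a nonzero $A$ disjoint from $B$. In the paper's argument this issue never arises, because \cite[Theorem~7.7]{LS23} delivers $A_Y=0$ on the dlt model directly, and that propagates to $A=0$ by pushforward.
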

\begin{proof}
This is essentially \cite[Theorem 7.7]{LS23}. We explain how the dlt condition there can be relaxed. Let $f\colon Y\rightarrow X$ be a dlt modification of $(X,A+B)$, $B^Y:=f^{-1}_*B$, and $K_Y+A_Y+B^Y:=f^*(K_X+A+B)$. Then $A_Y$ and $B^Y$ are reduced Weil divisors, $(Y,A_Y+B^Y)$ is $\Qq$-factorial dlt, $\pet(Y,A_Y;B^Y)=\frac{12}{13}$, $K_Y+A_Y+B^Y$ is nef, and $$(K_Y+A_Y+B^Y)\cdot B^Y=(K_X+A+B)\cdot B>0.$$  By \cite[Theorem 7.7]{LS23}, $A_Y=0$, $B^Y$ is a smooth rational curve, and $B^Y$ only contains cyclic quotient singularities of $Y$ of order $\leq 5$.  Thus $f$ is the identity morphism, and the theorem is proven.
\end{proof}
We illustrate the estimation of the volume using the gaps of pseudo-effective thresholds by a simple computation in the situation of Theorem~\ref{thm: gap pet}: Denote $c=\pet(X, 0; B)$. If $(K_X+B)\cdot B>0$, then it is easy to see that  $(K_X+B)\cdot B>\frac{1}{42}$, and  
\begin{align*}
    (K_X+B)^2 &=(K_X+B)\cdot (K_X+cB+(1-c)B) \\
          &= (K_X+B)\cdot (K_X+cB) + (1-c)(K_X+B)\cdot B \\
          &\geq  (1-c)(K_X+B)\cdot B \\
          & \geq \frac{1}{13}\cdot \frac{1}{42} = \frac{1}{546}.
\end{align*}
This lower bound is already very close to the optimal one $\frac{1}{462}$ obtained in \cite{LS23}; see also Theorem~\ref{thm: LS 462}.

The following proposition improves the lower bound when the pseudo-effective threshold achieves the maximum.
\begin{prop}\label{prop: qfact 12/13 smallest volume}
    Keep the notation and assumptions of Theorem~\ref{thm: gap pet}. Suppose that the pseudo-effective threshold $\pet(X,0;B)=\frac{12}{13}$ and $(K_X+B)\cdot B>0$. Then $(K_X+B)^2\geq\frac{1}{260}$.
\end{prop}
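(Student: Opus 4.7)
The plan is to refine the crude estimate carried out in the discussion preceding the proposition by replacing the generic lower bound $(K_X+B)\cdot B \geq \frac{1}{42}$ with a sharper one that becomes available once we exploit the structure of $B$ given by Theorem~\ref{thm: gap pet}. Setting $c=\pet(X,0;B)=\frac{12}{13}$ and decomposing
\[
(K_X+B)^2 \;=\; (K_X+cB)\cdot(K_X+B) \;+\; (1-c)(K_X+B)\cdot B,
\]
the first summand is non-negative because $K_X+cB$ is pseudo-effective by the definition of $\pet$ and $K_X+B$ is nef. Hence everything reduces to showing $(K_X+B)\cdot B \geq \frac{1}{20}$.

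Next, I would invoke Theorem~\ref{thm: gap pet} to conclude that $B\cong \PP^1$ and that every singular point of $X$ lying on $B$ is a cyclic quotient singularity of order at most $5$. Since $(X,B)$ is lc with $B=\lfloor B\rfloor$ a reduced irreducible curve, the pair is in fact plt along $B$, so the standard adjunction formula for plt surface pairs at cyclic quotient singularities yields
\[
(K_X+B)\cdot B \;=\; -2 \;+\; \sum_{p\in B\cap\Sing X}\frac{n_p-1}{n_p},
\]
where $n_p=\ord(X\ni p)\in\{2,3,4,5\}$.

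The remaining step is a finite arithmetic optimization: determine the smallest strictly positive value of $-2+\sum_p\frac{n_p-1}{n_p}$ as $(n_p)$ ranges over finite multisets in $\{2,3,4,5\}$. Since $\frac{n-1}{n}\leq\frac{4}{5}$, at least three singular points must appear on $B$ for the expression to be positive, and an easy direct check shows that the minimum positive deviation of $\sum_p\frac{n_p-1}{n_p}$ from $2$ equals $\frac{41}{20}-2=\frac{1}{20}$, attained only by the configuration $(n_p)=(5,4,2)$. Feeding $(K_X+B)\cdot B\geq\frac{1}{20}$ into the decomposition gives the desired bound $(K_X+B)^2\geq\frac{1}{13}\cdot\frac{1}{20}=\frac{1}{260}$.

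The only non-routine ingredient is the plt-adjunction identification of the different with $\sum_p\frac{n_p-1}{n_p}[p]$, which is classical. The combinatorial enumeration, though elementary, should be presented in a way that genuinely covers multisets of all cardinalities rather than only triples; this is the step most likely to hide an accidental omission.
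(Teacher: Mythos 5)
Your proof takes essentially the same route as the paper: invoke Theorem~\ref{thm: gap pet} to control the singularities on $B$, use plt adjunction to write $(K_X+B)\cdot B=-2+\sum_i(1-\tfrac{1}{n_i})$ with $n_i\le 5$, observe that the smallest positive value this can take is $\tfrac{1}{20}$, and feed that into the decomposition $(K_X+B)^2=(K_X+cB)\cdot(K_X+B)+(1-c)(K_X+B)\cdot B$ with $c=\tfrac{12}{13}$, the first term being non-negative since a pseudo-effective divisor pairs non-negatively with a nef one on a surface. The only difference is cosmetic: the paper asserts the bound $(K_X+B)\cdot B\ge\tfrac{1}{20}$ without spelling out the finite enumeration, whereas you flag the enumeration explicitly (and correctly note that multisets of all sizes must be covered, which is easy since any multiset of size $\ge 4$ forces a strictly larger deviation — a point worth one line in a final write-up).
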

\begin{proof}
By Theorem~\ref{thm: gap pet}, $X$ is klt, and $B$ is a smooth curve which only contains cyclic quotient singularities $x_1, \dots, x_m$ of $X$ of order $\leq 5$. Let $n_i$ be the order of $x_i\in X$ for each $i$, then
    $$0<(K_X+B)\cdot B=-2+\sum_{i=1}^m\left(1-\frac{1}{n_i}\right).$$
    Since $n_i\leq 5$ for each $i$, we have $$-2+\sum_{i=1}^m\left(1-\frac{1}{n_i}\right)\geq\frac{1}{20}.$$ Since $(K_X+B)$ is nef,
    $$(K_X+B)^2=(K_X+B)\cdot\left(K_X+\frac{12}{13}B\right)+\frac{1}{13}(K_X+B)\cdot B\geq\frac{1}{13}(K_X+B)\cdot B\geq\frac{1}{260}.$$
\end{proof}

When $(K_X+B)\cdot B=0$, we need to make a more delicate use of the other invariant, the nef threshold $\nt(X,0;B)$, to obtain the desired optimal lower bound; see the proof of Theorem~\ref{thm: 825} in Section \ref{sec: 825}.

\subsection{A K3 surface with a non-symplectic autmorphism of order 11}
A special $K3$ surface $S$ with non-symplectic autmorphism of order 11, found by \cite{OZ00} and further studied by \cite{OZ11, AST11}, turns out to be relevant to the key Example~\ref{ex: 825} of the current paper. We show in the following lemma the uniqueness of certain configuration of $(-2)$-curves on $S$.
\begin{lem}\label{lem: K3}
    Let $S$ be a smooth K3 surface with a non-symplectic autmorphism $\sigma$  of order 11. Suppose that the fixed locus $S^\sigma$ contains a rational curve $B$. Then, the isomorphism class of $S$ is uniquely determined, and up to automorphisms of $S$, there is a unique configuration of ten $(-2)$-curves containing $B$ of the following form:
    \[
\begin{tikzpicture}
    \node[wbullet, label=below: $B$] (B) at (0,0){};
    \node[wbullet, label=right: $2$] (E1) at (0,1){};
    \node[wbullet, label=below: $2$] (E21) at (1, 0){};
    \node[wbullet, label=below: $2$] (E22) at (2, 0){};
    \node[wbullet, label=below: $2$] (E31) at (-1, 0){};
    \node[wbullet, label=below: $2$] (E32) at (-2, 0){};
    \node[wbullet, label=below: $2$] (E33) at (-3, 0){};
    \node[wbullet, label=below: $2$] (E34) at (-4, 0){};
    \node[wbullet, label=below: $2$] (E35) at (-5, 0){};
    \node[wbullet, label=below: $C$] (E36) at (-6, 0){};
    \draw (B)--(E1);
    \draw (B)--(E21)--(E22);
    \draw (B)--(E31)--(E32)--(E33)--(E34)--(E35)--(E36);
\end{tikzpicture}
\]
\end{lem}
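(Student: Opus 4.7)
The plan is to split the proof into two parts: (a) uniqueness of $S$ as a K3 surface, and (b) uniqueness of the curve configuration up to $\operatorname{Aut}(S)$.

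For part (a), I would invoke the classification of K3 surfaces with a non-symplectic automorphism of prime order $p=11$ due to Oguiso--Zhang \cite{OZ00, OZ11} and Artebani--Sarti--Taki \cite{AST11}. These works show that such K3 surfaces form a very small family, and the extra constraint that $S^\sigma$ contains a (rational) curve cuts out a unique isomorphism class of $S$ which can be described explicitly (e.g., as a specific weighted hypersurface or as the minimal resolution of a specific quotient). In particular, both the transcendental and Picard lattices of $S$ are known explicitly, and one reads off $\rho(S)=10$, with $\sigma^*$ acting on the transcendental lattice through its quotient by $\Phi_{11}$.

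For part (b), I would use the $\sigma$-equivariant geometry of configurations of $(-2)$-curves. Since $B \subset S^\sigma$ is pointwise fixed and $\sigma|_B$ has order $11$ on $B \cong \mathbb{P}^1$, it has exactly two fixed points $p_1, p_2 \in B$. Any $(-2)$-curve $E$ meeting $B$ transversally in a single point must have its $\sigma$-orbit meeting $B$ in the same way; since $\gcd(11,|\mathrm{orbit}|)$ forces the orbit to be either a singleton or of size $11$, and a chain attached to $B$ has only a few curves, each curve $E$ in the configuration must be $\sigma$-stable, with $E \cap B \in \{p_1, p_2\}$. One then enumerates chains of $\sigma$-stable $(-2)$-curves emanating from $p_1$ and $p_2$, constrained by: (i) the intersection matrix of the whole configuration must be negative-definite (the configuration embeds into $B^\perp \subset \operatorname{Pic}(S)$, which is negative-definite of rank $9$), and (ii) the resulting sublattice must isometrically embed into $\operatorname{Pic}(S)$ computed in step (a).

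The main obstacle is the lattice-theoretic combinatorics: ruling out all other branch-length configurations of three chains attached to $B$ except $(1,2,6)$, and showing the configuration has exactly these three branches (no fourth branch, no extra fork, no cycle). I would handle this by comparing the discriminant of the candidate sublattice against the known discriminant of $\operatorname{Pic}(S)$: the tree with branches of length $(1,2,6)$ and central vertex $B$ of self-intersection $-2$ gives a lattice whose discriminant agrees with that of $B^\perp$, while any other admissible tree gives an incompatible discriminant or fails to embed $\sigma$-equivariantly. Finally, the uniqueness up to $\operatorname{Aut}(S)$ follows because the Torelli-type description of $\operatorname{Aut}(S)$ from \cite{OZ11} shows that the automorphism group acts transitively on the set of such sublattice embeddings.
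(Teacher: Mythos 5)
Your proposal has a genuine gap, and it stems from two related numerical errors that make the lattice-theoretic route as you describe it unworkable.

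First, the Picard number is $\rho(S)=12$, not $10$: by \cite{AST11, OZ11} one has $\operatorname{Pic}(S) = H^2(S,\ZZ)^\sigma \cong U\oplus A_{10}$, which has rank $12$. Second, and more seriously, the $10$-curve configuration you want to pin down is \emph{not} negative-definite, and it does not live in $B^\perp$: the curves $E_1, E_{21}, E_{31}$ meet $B$, so they are not in $B^\perp$, and the full configuration spans a lattice of signature $(1,9)$ (in fact isomorphic to $U\oplus E_8$, since the nine curves other than $C$ form an affine $\tilde E_8$ whose fundamental cycle is nef of square zero, and $C$ is a section). So your step (i) --- ``the intersection matrix must be negative-definite because it embeds into a negative-definite $B^\perp$ of rank $9$'' --- is false on two counts. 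Your step (ii), comparing discriminants of the candidate sublattice against $\operatorname{Pic}(S)$, also cannot work as stated because the configuration spans only a rank $10$ sublattice of the rank $12$ Picard lattice, so there is no direct discriminant comparison; one would have to classify overlattices, which is not addressed. Finally, the observation that each $(-2)$-curve is $\sigma$-stable already follows for free from $\operatorname{Pic}(S)$ being $\sigma$-invariant; no orbit-counting is needed.

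The paper takes a different and cleaner route. After identifying $\operatorname{Pic}(S) = H^2(S,\ZZ)^\sigma \cong U\oplus A_{10}$, it invokes Oguiso--Zhang's explicit description \cite[Prop.\ 3.1, Sec.\ 4]{OZ00} of a $\sigma$-invariant elliptic fibration $f\colon S\to\PP^1$ with a section, with one fiber $F_0$ of type $\II^*$ (a tree of nine $(-2)$-curves) and one of type $\III$, given by the Weierstrass equation $y^2 = x^3 + t^7 x + t^5$ with $\sigma$ acting diagonally. From this, $B$ must lie in $F_0$, and $F_0 + C$ (for a section $C$) gives the configuration; any two sections differ by a fiberwise automorphism (translation in the generic fiber), which gives the uniqueness up to $\Aut(S)$. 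Conversely, any configuration as in the statement reconstitutes a type $\II^*$ fiber plus a section, so it must equal some $F_0+C$. This fibration-theoretic argument sidesteps all the lattice-embedding combinatorics you proposed; if you want to salvage the lattice approach, you would need to correct the rank and signature computations and then deal with the overlattice classification, which is substantially more work than what the elliptic fibration gives immediately.
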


\begin{proof}
By \cite[Theorem 7.3]{AST11} and \cite[page 1668]{OZ11}, we know that the invariant lattice $H^2(S, \ZZ)^{\sigma}\cong U\oplus A_{10}$ is exactly the Picard lattice of $X$. In particular, any $(-2)$-curve is stable under the action of $\sigma$. 

By \cite[Proposition 3.1 and Section 4]{OZ00}, there is a $\sigma$-invariant elliptic fibration $f\colon S\rightarrow \PP^1$ with a section such that 
\begin{enumerate}
    \item the fibers $F_0$ and $F_\infty$ over $0$ and $\infty$ are of Kodaira types $\II^*$ and $\III$ respectively, and the remaining 11 singular fibers are of type $\I$, and
    \item the  Weierstrass model of $f$ with the action of $\sigma$ can be given as follows:
   \[
   y^2 = x^3 + t^7x +t^5,\,\, (x,y,t)\mapsto (\zeta^9x, \zeta^8y, \zeta t)
   \]
   where $\zeta$ is a primitive $11$-th root of unity.
\end{enumerate}
It follows that the isomorphism class of $S$ is uniquely determined. Also, $B$ is neccesarily contained in the fiber $F_0$, and $F_0+C$ gives the required configuration of $(-2)$-curves, where $C$ is a section of $f$. For another choice of section $C'$, there is automorphism $\tau\in \Aut(S)$ preserving each fiber of $f$ such that $\tau(C)=C'$, and hence $\tau(F_0+C) = F_0+C'$. 

Conversely, given any configuration $\widetilde F$ as in the theorem, $F=\widetilde F-C$ is of Kodaira type $\II^*$ and it defines a fibration $f\colon S\rightarrow\PP^1$ such that $F_0=F$ and $C$ is a section of $f$. Thus $\widetilde F = F_0+C$ as above.
\end{proof}
\begin{rmk}
    There are two other $\sigma$-invariant elliptic fibrations on $S$; see \cite[Examples 7.1 and 7.4]{AST11} for their Weierstrass models together with the actions of $\sigma$.
\end{rmk}
\begin{rmk}
 Let $\Delta$ be the reduced divisor on $S$  with support exactly the configuration of ten $(-2)$-curves as in Lemma~\ref{lem: K3}. Let $\pi\colon \tS\rightarrow S$ be the blow-up of the nodes of $\Delta$, and $\widetilde\Delta=\pi^{-1}_*\Delta$ the strict transform of $\Delta$ on $\tS$. Then the log canonical model $Z$ of the pair $(\tS,\widetilde\Delta)$ is a stable surface with geometric genus $h^0(Z,K_Z)=1$ and volume $K_Z^2=\frac{1}{143}$, which is the minimal possible for stable surfaces with positive geometric genus (\cite{Liu25}).
\end{rmk}

\section{The log canonical surface \texorpdfstring{$(U,B_U)$}{} with \texorpdfstring{$B_U=\lfloor B_U\rfloor\neq 0$}{} and \texorpdfstring{$(K_U+B_U)^2=\frac{1}{462}$}{} }\label{sec: 462}
We recall the following result of the first author and Shokurov.
\begin{thm}[{\cite[Theorem 1.4]{LS23}}]\label{thm: LS 462}
For any projective lc surface pair $(X,B)$ with $B$ a non-zero reduced divisor and $K_X+B$ ample, we have 
\begin{equation}\label{eq: 462}
    (K_X+B)^2\geq\frac{1}{462}.
\end{equation}
\end{thm}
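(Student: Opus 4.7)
The plan is to combine the pseudo-effective threshold gap from Theorem~\ref{thm: gap pet} with a sharp adjunction lower bound on $(K_X+B)\cdot B$. Set $c := \pet(X,0;B)$. Since $K_X+cB$ is pseudo-effective while $K_X+B$ is nef,
\[
(K_X+B)^2 = (K_X+B)\cdot(K_X+cB) + (1-c)(K_X+B)\cdot B \geq (1-c)(K_X+B)\cdot B.
\]
By Theorem~\ref{thm: gap pet}, either $c=\frac{12}{13}$, in which case Proposition~\ref{prop: qfact 12/13 smallest volume} immediately yields $(K_X+B)^2\geq \frac{1}{260}>\frac{1}{462}$ (the hypothesis $(K_X+B)\cdot B>0$ holds because $K_X+B$ is ample and $B\neq 0$), or $c\leq \frac{10}{11}$, in which case it remains to prove the adjunction bound $(K_X+B)\cdot B\geq \frac{1}{42}$.

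I would establish this bound component-by-component via surface adjunction. For each irreducible $B_i\subset B$ with normalization $\nu\colon B_i^\nu\to B_i$,
\[
(K_X+B)\cdot B_i = \deg\bigl(K_{B_i^\nu}+\Diff_{B_i^\nu}(B-B_i)\bigr),
\]
where $\Diff$ is an effective $\QQ$-divisor whose coefficients are read off the dual graph of $(X,B)$ near each preimage using the classification of lc surface singularities recalled in Section~\ref{sec: preliminaries}. The crucial configuration is when $B$ is a single smooth rational curve passing only through cyclic quotient singularities of $X$ of orders $n_1,\dots,n_r\geq 2$. Then $(K_X+B)\cdot B = -2+\sum_{i=1}^{r}(1-1/n_i)$, which is a positive rational by ampleness, and an elementary Egyptian-fraction analysis shows that the minimum positive value of this expression is $\frac{1}{42}$, realized uniquely by the triple $(n_1,n_2,n_3)=(2,3,7)$. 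Every other possibility (positive arithmetic genus of $B_i^\nu$, a nodal $B_i$, several components of $B$ meeting each other, or $B$ passing through a non-klt singularity of $X$) forces an additional genus contribution or a differential coefficient of size $\geq \frac{1}{2}$, giving a much larger lower bound.

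The main obstacle is a clean and exhaustive enumeration in the adjunction step, in particular handling the interactions of non-cyclic-quotient lc singularities of $X$ (dihedral, tetrahedral, octahedral, icosahedral, and the strictly non-klt types of case (d) of the classification) with the components of $B$, as well as the contributions from intersection points between different components. A compatibility check worth recording is that the extremal triple $(2,3,7)$ is indeed incompatible with $c=\frac{12}{13}$: Theorem~\ref{thm: gap pet} forces all such orders to be $\leq 5$ in that extremal case, so the two extremals live in disjoint ranges and do not interfere. Once $(K_X+B)\cdot B\geq \frac{1}{42}$ is established uniformly, combining with $c\leq \frac{10}{11}$ delivers
\[
(K_X+B)^2\geq \frac{1}{11}\cdot\frac{1}{42}=\frac{1}{462},
\]
completing the proof.
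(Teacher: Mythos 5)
This theorem is not proved in the paper: the bracketed reference \cite[Theorem~1.4]{LS23} in the theorem header marks it as imported, and indeed no proof follows the statement. There is therefore no internal argument of the paper to compare your proposal against. That said, your proposal is a valid reconstruction of the inequality from the \emph{other} LS23 ingredients quoted in the paper, and it refines the paper's own illustrative computation: the paragraph immediately after Theorem~\ref{thm: gap pet} carries out your decomposition $(K_X+B)^2\geq(1-c)(K_X+B)\cdot B$, but applies the cruder uniform bound $1-c\geq\tfrac1{13}$ and lands only at $\tfrac1{546}$, explicitly remarking that this falls short of $\tfrac1{462}$. Splitting on $c=\tfrac{12}{13}$ (funneled through Proposition~\ref{prop: qfact 12/13 smallest volume}, whose hypothesis $(K_X+B)\cdot B>0$ holds because $K_X+B$ is ample and $B\neq0$) versus $c\leq\tfrac{10}{11}$ (giving $1-c\geq\tfrac1{11}$) is precisely what closes the gap, and it is the same case split the paper itself deploys in Step~3 of the proof of Theorem~\ref{thm: 825}.

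Two spots in the adjunction step want tightening, both fixable. First, your statement that nodes of $B_i$, intersection points between components of $B$, and non-klt singular points along $B$ force ``a differential coefficient of size $\geq\tfrac12$'' is not a restriction: $1-\tfrac12=\tfrac12$ is already the smallest positive cyclic-quotient contribution, so $\geq\tfrac12$ rules nothing out. What actually happens in those configurations is a different coefficient \emph{equal to} $1$ (each branch of a node, each transverse meeting with another component of $B$, and each lc center of $(X,B)$ on $B_i$ contribute exactly $1$), after which the minimum positive value of $-2+1+\sum_i(1-\tfrac1{n_i})$ is $\tfrac16$, well above $\tfrac1{42}$. Second, the case ``$B$ passing through a non-klt singularity of $X$'' is vacuous: if $E$ is an lc place over $x$ with $a(E,X,0)=0$ and $B$ passes through $x$, then $\mult_E f^*B>0$, so $a(E,X,B)<0$, contradicting the assumption that $(X,B)$ is lc; hence $X$ is automatically klt along $B$. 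With these corrections the enumeration reduces cleanly to the Hurwitz-type minimum $-2+\sum_{i=1}^3(1-\tfrac1{n_i})=\tfrac1{42}$ at $(n_1,n_2,n_3)=(2,3,7)$ for a smooth rational component through cyclic quotient points, and the proof is complete.
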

The lower bound in \eqref{eq: 462} is attained by an lc surface pair. We recall its construction in the following example.
\begin{ex}[{\cite[Section 5]{AL19a}}]\label{ex: 462}
Let $L_0, L_1, L_2, L_3$ be four lines in general position on $\PP^2$, so their dual graph is 
\begin{center}
\begin{tikzpicture}[font=\tiny, scale=1.5]
\node[wbullet, label=below:$L_2$] (L2) at (330:1) {};
\node[wbullet, label=right:$L_3$] (L3) at (90:1) {};
\node[wbullet, label=below:$L_0$] (L0) at (210:1) {};
\node[wbullet, label=below:$L_1$](L1) at (0,0){}
edge (L0)
edge  (L2)
edge  (L3);
\draw (L0)--(L2)--(L3)--(L0);
\end{tikzpicture}
\end{center}
Let $g_U\colon \widetilde U\rightarrow \PP^2$ be the blow-up of some of the nodes of $\sum_{0\leq i\leq 3} L_i$, so that $g_U^{-1}(\sum_{0\leq i\leq 3} L_i)$ has the following dual graph
\begin{center}
\begin{tikzpicture}[font=\tiny, scale=3.6]
\node[bbullet, label=below:$L_0^{\tilde U}$, label=left: $1(1)$] (L0) at (210:1) {};
\node[bbullet, label=below:$L_2^{\tilde U}$,  label=right:$3(\frac{6}{7})$] (L2) at (330:1) {};
\node[bbullet, label=below:$L_1^{\tilde U}$,  label=north east:$4(\frac{8}{11})$](L1) at (0,0){};
\node[bbullet, label=right:$L_3^{\tilde U}$, label=above:$2(\frac{6}{11})$] (L3) at (90:1) {};
\draw (L0) -- (L3)node[bbullet, pos=1/3, label=left:$2(\frac{1}{2})$](L03A){}node[wbullet, pos=2/3, label=left:$1(0)$](L03B){};
\draw (L0) -- (L1)
node[bbullet, pos=1/4, label=above:$3(\frac{2}{3})$](L01A){}
node[wbullet, pos=2/4, label=above: $1(0)$](L01B){}
node[bbullet, pos=3/4, label=above:$2(\frac{4}{11})$](L02C){};
\draw (L0) --(L2);
\draw (L2) -- (L1) node[bbullet, pos=1/4, label=above:$2(\frac{4}{7})$](L12A){}node[bbullet, pos=2/4, label=above:$2(\frac{2}{7})$](L12B){}node[wbullet, pos=3/4, label=above:$1(0)$](L12C){};
\draw (L1) -- (L3);
\draw (L2) -- (L3)node[wbullet, pos=1/4, label=right: $1(0)$](L23A){}node[bbullet, pos=2/4, label=right: $2(\frac{2}{11})$](L23B){}node[bbullet, pos=3/4, label=right:$2(\frac{4}{11})$](L23C){};
\end{tikzpicture}
\end{center}
where
\begin{itemize}
    \item $L_i^{\widetilde U}$ denotes the strict transforms of $L_i$ for $0\leq i\leq 3$;
    \item a white bullet denotes the exceptional $(-1)$-curve of the last blow-up over a node;
    \item a black bullet denotes either an $L_i^{\widetilde U}$ or a $(-e)$-curve with $e\geq 2$;
    \item the numbers outside the brackets denote the negative of the self-intersections of the corresponding curves;
    \item the numbers in the brackets denote the coefficients of the curve in the divisor $B_{\widetilde U}$ (to be defined below).
\end{itemize} 
We call the curves appearing on the above dual graph \emph{visible} curves on $\widetilde U$. Let $\widetilde B$ be the reduced divisor consisting of all the curves denoted by the black bullets in the dual graph. Then $(\widetilde X, \widetilde B)$ is of log general type, and one can take its ample model $f_U\colon (\widetilde U, \widetilde B)\rightarrow (U, B_U)$. Then all the curves corresponding to the black bullets except $L_0^{\tilde U}$ are contracted by $f_U$, $B_U=f_{U*}L_0^{\tilde U}$ is irreducible and reduced, $K_{U}+B_U$ is ample, and $(K_{U}+B_U)^2=\frac{1}{462}$. 

On the other hand, $f_U$ can also be viewed as the minimal resolution of $U$. Let $B_{\widetilde U}$ be the $\RR$-divisor on $\widetilde U$ such that $K_{\widetilde U} + B_{\widetilde U} = f_U^*(K_{U}+B_U)$ and $f_{U*}B_{\widetilde U} = B_U$. Then the coefficients of the visible curves in $B_{\widetilde U}$ are the numbers in the brackets of the above dual graph. 

The construction can be illustrated by the following diagram:
\[
\begin{tikzcd}
    (\widetilde U, \widetilde B) \arrow[r, symbol = \supset]\arrow[d, "g_U"']& (\widetilde U, B_{\tilde U})  \arrow[d, "f_U"] \\
    (\PP^2, \sum_{i=0}^{3} L_i) & (U, B_U)
\end{tikzcd}
\]
\end{ex}

\begin{rem}
 In a very recent paper \cite{ET23}, it is shown that a general degree $42$ hypersurface $X\subset\mathbb P(6,11, 14, 21)$ with weighted homogeneous coordinates $x_i$ ($0\leq i\leq 3$), and $B=(x_1=0)$ satisfies that $(X,B)$ is lc, $K_X+B$ is ample, and $\vol(K_X+B)=\frac{1}{462}$. By the following Theorem \ref{thm: 462}, the surface pair $(U,B_U)$ constructed in Example \ref{ex: 462} is isomorphic to the surface pair constructed in \cite{ET23}.
\end{rem}

The goal of the rest part of this section is to complement Theorem~\ref{thm: LS 462} by characterizing the surface achieving the minimal volume. This is a key step in showing the uniqueness part of Theorem \ref{thm: 825}.
\begin{thm}\label{thm: 462}
    Let $(X,B)$ be a projective lc surface pair such that $B$ is a non-zero reduced divisor, $K_X+B$ ample, and $\vol(K_X+B)=\frac{1}{462}$. Then $(X,B)$ is isomorphic to the lc surface pair $(U, B_U)$ constructed in Example~\ref{ex: 462}.
\end{thm}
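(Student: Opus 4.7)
The strategy is to revisit the chain of inequalities that established Theorem \ref{thm: LS 462}, force every one of them to be an equality, extract the resulting rigid combinatorial data, and match it with the dual graph of Example \ref{ex: 462}. Setting $c:=\pet(X,0;B)$, Theorem \ref{thm: gap pet} leaves the two options $c=\tfrac{12}{13}$ or $c\le\tfrac{10}{11}$. The first is ruled out at once: since $K_X+B$ is ample and $B\ne 0$ we have $(K_X+B)\cdot B>0$, so Proposition \ref{prop: qfact 12/13 smallest volume} would give $(K_X+B)^2\ge\tfrac{1}{260}>\tfrac{1}{462}$. Therefore $c\le\tfrac{10}{11}$ and
\begin{align*}
\tfrac{1}{462}=(K_X+B)^2 &= (K_X+B)\cdot(K_X+cB)+(1-c)(K_X+B)\cdot B \\
&\ge (1-c)(K_X+B)\cdot B \;\ge\; \tfrac{1}{11}(K_X+B)\cdot B.
\end{align*}
Combined with the bound $(K_X+B)\cdot B\ge\tfrac{1}{42}$ underlying the proof of Theorem \ref{thm: LS 462}, equality must hold throughout, yielding $c=\tfrac{10}{11}$, $(K_X+B)\cdot B=\tfrac{1}{42}$, and $(K_X+B)\cdot(K_X+cB)=0$.

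Using these three equalities I would then determine the singularities of $X$ and their incidence with $B$. Adjunction, together with the classification of lc but non-klt surface singularities recalled before Lemma \ref{lem: e^2 and -1/3}, forces $B$ to be a smooth rational curve, with the equality $(K_X+B)\cdot B=\tfrac{1}{42}=-2+(\tfrac{1}{2}+\tfrac{2}{3}+\tfrac{6}{7})$ realized by $B$ meeting $\Sing(X)$ at exactly three cyclic quotient points of orders $2$, $3$ and $7$. The threshold equality $c=\tfrac{10}{11}$ produces an additional lc place arising from adding $\tfrac{10}{11}B$ whose log discrepancy vanishes; combined with $(K_X+B)\cdot(K_X+\tfrac{10}{11}B)=0$, this forces a fourth singularity of $X$, disjoint from $B$, whose order is divisible by $11$, and pins down the precise way in which the exceptional chains over all four singularities interact with $\widetilde B=f^{-1}_*B$ on the minimal resolution $f\colon\widetilde X\to X$.

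The final step is to identify the dual graph of $\widetilde B\cup\Exc(f)$ with that of Example \ref{ex: 462}. Each of the four singularities contributes a specific branch determined by its order and by its intersection profile with $\widetilde B$; the global identity $(K_X+B)\cdot(K_X+\tfrac{10}{11}B)=0$, unwound via the Zariski decomposition $K_{\widetilde X}+B_{\widetilde X}=f^{*}(K_X+B)$ and formula \eqref{eq: ample model}, prescribes the self-intersections of the remaining visible $(-e)$-curves and eliminates all extraneous $(-2)$-curves on $\widetilde X$. Comparison with the diagram in Example \ref{ex: 462} then shows $(\widetilde X,\widetilde B)\cong(\widetilde U,\widetilde B)$, and passing to ample models yields $(X,B)\cong(U,B_U)$.

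The main difficulty lies in this last step: translating the numerical equalities from the first step into combinatorial rigidity of the resolution graph. The classification of lc surface singularities supplies only finitely many branches that can meet $\widetilde B$ in the required manner, but excluding all parasitic configurations of further $(-2)$-curves on $\widetilde X$ that a priori could coexist with $\widetilde B$, and showing that the sole realization is the blow-up of the four-line configuration on $\PP^2$, requires careful book-keeping with the Zariski decomposition above. The orthogonality of the positive part $f^{*}(K_X+B)$ to every $f$-contracted curve and to every component of $B_{\widetilde X}-\widetilde B$ is the constraint that collapses the combinatorial search to a single possibility, and is the genuine content of the theorem.
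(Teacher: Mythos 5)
Your analysis correctly recovers the preliminary structure established in the paper's Lemma~\ref{lem: basic property log surface min vol}: ruling out $c=\tfrac{12}{13}$ via Proposition~\ref{prop: qfact 12/13 smallest volume}, forcing all inequalities to be equalities, concluding $c=\tfrac{10}{11}$, $K_X+\tfrac{10}{11}B\sim_\QQ 0$, $B$ a smooth rational curve with three cyclic quotient singularities of orders $2,3,7$, and computing the local dual graph of $f^{-1}(B)$. This matches the paper's first steps. Your route to $c\le\tfrac{10}{11}$ (excluding $c=\tfrac{12}{13}$ via Proposition~\ref{prop: qfact 12/13 smallest volume}) is a minor variant of the paper's citation of \cite[Lemma~7.9]{LS23}, and it works.

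However, there is a genuine gap at exactly the point you flag as ``the genuine content of the theorem,'' and the paper resolves it by a method that is qualitatively different from what you sketch. You propose to pin down $X$ by ``careful book-keeping with the Zariski decomposition,'' deducing from $K_X+\tfrac{10}{11}B\equiv 0$ a fourth singularity off $B$ of order divisible by $11$ and then matching the full resolution graph to Example~\ref{ex: 462}. But the equalities you derive give no control over the Picard number of $X$, the number of singular points of $X$ disjoint from $B$, or their types -- a priori there could be several such points, and nothing in a purely numerical Zariski-decomposition argument bounds them. (In the actual surface $(U,B_U)$ the extra singularity has order $22$ and lies at the end of a Picard-rank-many configuration; nothing local to $B$ sees it.) Your proposal never supplies the rigidity that kills all alternative configurations, and the ``easy enumeration'' you appeal to is not finite without an external input.

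The paper's actual mechanism is entirely different: exploiting $K_X+\tfrac{10}{11}B\sim_\QQ 0$, it takes the minimal $m$ with $11m(K_X+\tfrac{10}{11}B)\sim 0$ and forms the associated cyclic cover $\pi\colon \bar S\to X$ of degree $11m$. Then $K_{\bar S}\sim 0$ with canonical singularities, the presence of an $A_1$-point forces the minimal resolution $S$ to be a K3 surface, and the residual $\ZZ/11\ZZ$-action on $S$ is a non-symplectic automorphism of order $11$ fixing a rational curve. At this point the classification results of Oguiso--Zhang and Artebani--Sarti--Taki (\cite{OZ00,OZ11,AST11}) -- encapsulated here in Lemma~\ref{lem: K3} -- determine $S$, the automorphism, $m$, and, crucially, the configuration of $(-2)$-curves through the fixed rational curve, uniquely up to automorphism. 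Descending back through the cover then gives the uniqueness of $(X,B)$. This global-to-local argument converts an unbounded combinatorial problem into a rigid question about a single K3 surface; it is the missing ingredient in your proposal.
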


Before giving the proof of Theorem~\ref{thm: 462}, we control the singularities and the pseudo-effective threshold of $(X, B)$, based on the results of \cite{LS23}.
\begin{lem}\label{lem: basic property log surface min vol}
      Keep the notation and assumptions as in Theorem~\ref{thm: 462}.  Then the following holds.
\begin{enumerate}
        \item $B$ is a smooth rational curve, $(X,B)$ is plt. In particular, $X$ is klt.
        \item $K_X+\frac{10}{11}B\sim_{\mathbb Q}0$.
        \item Let $f: \widetilde X\rightarrow X$ be the minimal resolution of $X$, and $\widetilde B = f^{-1}_*B$. Then the dual graph of $\Supp f^{-1}(B)$ is as follows:
        \[
\begin{tikzpicture}
    \node[wbullet, label=below: $\tilde B$] (B) at (0,0){};
    \node[wbullet, label=below: $3$] (E2) at (1, 0){};
    \node[wbullet, label=right: $2$] (E1) at (0,1){};
    \node[wbullet, label=below: $3$] (E3) at (-1, 0){};
    \node[wbullet, label=below: $2$] (E31) at (-2, 0){};
    \node[wbullet, label=below: $2$] (E32) at (-3, 0){};
    \draw (B)--(E1);
    \draw (B)--(E2);
    \draw (B)--(E3)--(E31)--(E32);
\end{tikzpicture}
\]
where $\widetilde B$ is a $(-1)$-curve. In particular, $B$ contains exactly three singular points of $X$, which are all cyclic quotient singularities, and their orders are equal to $2,3,7$ respectively.
        \end{enumerate}
\end{lem}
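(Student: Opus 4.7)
The plan is to trace the inequalities in the proof of the lower bound $(K_X+B)^2 \ge \tfrac{1}{462}$ from Theorem~\ref{thm: LS 462} and extract the rigidity forced by equality. Set $c := \pet(X, 0; B)$. By Theorem~\ref{thm: gap pet} with $A = 0$, either $c = \tfrac{12}{13}$ or $c \le \tfrac{10}{11}$. Ampleness of $K_X+B$ and $B\neq 0$ give $(K_X+B)\cdot B > 0$, so if $c = \tfrac{12}{13}$, Proposition~\ref{prop: qfact 12/13 smallest volume} would force $(K_X+B)^2 \ge \tfrac{1}{260}$, contradicting the hypothesis; hence $c \le \tfrac{10}{11}$. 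Expanding
\[
(K_X+B)^2 = (K_X+B)\cdot(K_X+cB) + (1-c)(K_X+B)\cdot B \ge \tfrac{1}{11}(K_X+B)\cdot B,
\]
and invoking the lower bound $(K_X+B)\cdot B \ge \tfrac{1}{42}$ from the proof of \cite[Theorem~1.4]{LS23} (arising from the orbifold Gauss--Bonnet estimate on the minimal positive value of $\deg(K_{B^\nu}+\Diff_{B^\nu})$), we recover $(K_X+B)^2 \ge \tfrac{1}{462}$. Equality forces $c = \tfrac{10}{11}$, $(K_X+B)\cdot(K_X+\tfrac{10}{11}B) = 0$, and $(K_X+B)\cdot B = \tfrac{1}{42}$. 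Ampleness of $K_X+B$ then yields $K_X+\tfrac{10}{11}B \equiv 0$; treating $(X,\tfrac{10}{11}B)$ as a klt log Calabi--Yau pair (justified once (1) below is established), finiteness of the torsion index for klt CY surfaces promotes this to $K_X+\tfrac{10}{11}B \sim_{\QQ} 0$. This gives (2).

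For (1), I exploit the equality $(K_X+B)\cdot B = \tfrac{1}{42}$. If $B = \sum_j B_j$ had $\ge 2$ components, each $(K_X+B)\cdot B_j$ would be strictly positive and, by the same Gauss--Bonnet estimate, bounded below by $\tfrac{1}{42}$, forcing the sum to exceed $\tfrac{1}{42}$ -- a contradiction. Hence $B$ is irreducible, and $\tfrac{1}{42} = \deg(K_{B^\nu}+\Diff_{B^\nu})$. The coefficients of $\Diff_{B^\nu}$ are $1$ at non-plt points of $(X,B)$ on $B$ and $1-\tfrac{1}{n}$ at cyclic quotient singularities of $X$ of order $n$ through which $B$ passes as a smooth plt branch. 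Since $\tfrac{1}{42} < 1$, no coefficient-$1$ contributions can appear, so $(X,B)$ is plt, $B$ is normal (hence smooth), and $X$ is klt. Next, $g(B^\nu) > 0$ would leave no room for a positive contribution from $\Diff_{B^\nu}$, so $B \cong \PP^1$. The equation $\sum_{i=1}^{m}(1-\tfrac{1}{n_i}) = \tfrac{85}{42}$ with integers $n_i \ge 2$ admits only $m = 3$ and $\{n_1,n_2,n_3\} = \{2,3,7\}$ ($m \le 2$ fails by size, and $m \ge 4$ is ruled out by $\sum(1-\tfrac{1}{n_i}) \ge m/2 > \tfrac{85}{42}$ once $m \ge 5$, with $m=4$ dispatched by a short explicit check). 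This proves (1) and locates the three singular points of $X$ on $B$.

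For the dual graph in (3), I work on the minimal resolution $f\colon \widetilde X \to X$. From (2), $K_X+B \equiv \tfrac{1}{11}B$, so $B^2 = 121\cdot(K_X+B)^2 = \tfrac{11}{42}$. Each singularity on $B$ resolves to a Hirzebruch--Jung chain, and plt-ness places $\widetilde B$ transversally at a tail of each. Writing $f^*B = \widetilde B + \sum_j c_j E_j$ with $c_j$ determined by $(f^*B)\cdot E_i = 0$, one has $\widetilde B^2 = B^2 - (c^{(2)} + c^{(3)} + c^{(7)})$, where $c^{(n)}$ denotes the coefficient of the tail curve in the order-$n$ chain. The order-$2$ chain is forced to be $(-2)$ with $c^{(2)} = \tfrac{1}{2}$; the order-$3$ chain is $(-3)$ or $(-2)(-2)$ with $c^{(3)} \in \{\tfrac{1}{3}, \tfrac{2}{3}\}$; and the order-$7$ chain, ranging over Hirzebruch--Jung presentations of $\tfrac{1}{7}(1,q)$ for $q \in \{1,\dots,6\}$ (with tail on either end), gives $c^{(7)} = \tfrac{q}{7}$. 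The identity $42\widetilde B^2 = -10 - 42c^{(3)} - 42c^{(7)} \in 42\ZZ$ then picks out, by a direct check of the twelve combinations, the unique case $c^{(3)} = \tfrac{1}{3}$ and $c^{(7)} = \tfrac{3}{7}$ -- corresponding to the $(-3)$-chain and the $(-3)(-2)(-2)$-chain with tail adjacent to the $(-3)$-end -- yielding $\widetilde B^2 = -1$. This establishes (3). The main obstacle is precisely this final case analysis at the order-$7$ point; fortunately the $B^2$-integrality constraint is sharp enough to select the unique valid configuration.
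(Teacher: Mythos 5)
Your argument takes a somewhat different route from the paper in part (1): instead of passing to a dlt modification and invoking \cite[Theorem 7.7, Lemma 7.8]{LS23} (as the paper does), you first rule out $\pet(X,0;B)=\tfrac{12}{13}$ via Proposition~\ref{prop: qfact 12/13 smallest volume}, then squeeze the equality case of the inequality chain and apply the orbifold Gauss--Bonnet (Hurwitz) bound $1/42$ to pin down the three different coefficients $\tfrac12, \tfrac23, \tfrac67$. This is a valid and arguably more self-contained way to see that $B$ is irreducible and smooth rational and that $(X,B)$ is plt \emph{along} $B$. Parts (2) and (3) are essentially the paper's computations (abundance vs.~torsion-index finiteness is a cosmetic difference; the integrality/enumeration argument for the Hirzebruch--Jung chains is the same and your bookkeeping is correct).

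However, there is a genuine gap in (1): the Hurwitz argument only controls points of $X$ lying \emph{on} $B$, since the different $\Diff_{B^\nu}$ sees nothing away from $B$. You conclude ``$(X,B)$ is plt, $\dots$, $X$ is klt,'' but nothing in your argument excludes an lc center of $X$ (equivalently of $(X,B)$) at some point $x_0\notin B$. The pair $(X,B)$ is merely assumed lc, so $X$ may a priori have non-klt singularities off $B$; the numerical relation $K_X+\tfrac{10}{11}B\equiv 0$ does not obviously forbid this. This is precisely the reason the paper first passes to a dlt modification $g\colon Y\to X$ of $(X,B)$---which extracts lc places over \emph{all} lc centers, including any away from $B$---and then uses \cite[Theorem 7.7, Lemma 7.8]{LS23} to deduce that the extracted boundary $B_Y-B^Y$ must vanish, i.e.~$g$ is the identity and hence $(X,B)$ is ($\QQ$-factorial) plt everywhere. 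Your proof would need a step along these lines (or another argument showing $\Nklt(X)\subset B$) before the conclusion ``$X$ is klt'' and the subsequent use of $(X,\tfrac{10}{11}B)$ as a klt log Calabi--Yau pair are legitimate. The paper's version of (2) sidesteps this by citing abundance for lc surface pairs, which does not require klt, so that part can be salvaged independently of the gap; but the claim in (1) as stated is not fully proved by your argument.
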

\begin{proof}
(1) We let $g: Y\rightarrow X$ be a dlt modification of $(X,B)$, and $B^Y:=g^{-1}_*B$. Let $B_Y$ be the effective divisor on $Y$ such that $K_Y+B_Y=g^*(K_X+B)$ and $g_*B_Y=B$. By \cite[Theorem 7.7]{LS23}, there are two cases:

\medskip

\noindent\textbf{Case 1}. $\pet(Y,0;B^Y)=\frac{12}{13}$ and $B^Y$ is a smooth rational curve. In this case, $g$ is the identity morphism, $B$ is a smooth rational curve, and $(X,B)$ is $\Qq$-factorial plt. In particular, $X$ is klt.

\medskip

\noindent\textbf{Case 2}.  $\pet(Y,0;B^Y)\leq\frac{10}{11}$.  In this case, $\pet(Y,B_Y-B^Y;B^Y)\leq\frac{10}{11}$. Since $K_X+B$ is ample, $$(K_Y+B_Y)\cdot B^Y=(K_X+B)\cdot B>0.$$
By \cite[Lemma 7.8]{LS23}, $B_Y=B^Y$ and $B^Y$ is a smooth rational curve. Thus $g$ is the identity morphism, $B$ is a smooth rational curve, and $(X,B)$ is $\Qq$-factorial plt. In particular, $X$ is klt.

(2) Let $c:=\pet(X,0;B)$. Let $x_1,\dots,x_m$ be the singular points of $X$ lying on $B$, and let $n_i$ be  the order of $x_i\in X$ for $1\leq i\leq m$. We may assume that $n_i\leq n_j$ when $i\leq j$. By \cite[Lemma 7.9]{LS23}, $c\leq\frac{10}{11}$. In particular, $K_X+\frac{10}{11}B$ is pseudo-effective. We have
\begin{align*}
    \frac{1}{462}&=(K_X+B)^2=\left(K_X+\frac{10}{11}B\right)\cdot (K_X+B)+\frac{1}{11}(K_X+B)\cdot B\\
    &\geq \frac{1}{11}(K_X+B)\cdot B\\
    &=\frac{1}{11}\cdot\left(-2+\sum_{i=1}^m\left(1-\frac{1}{n_i}\right)\right)\geq\frac{1}{11}\cdot \frac{1}{42}=\frac{1}{462}.
\end{align*}
Therefore, the above inequalities are all equalities. It follow that $m=3$, $(n_1,n_2,n_3)=(2,3,7)$, and  
\begin{equation}\label{eq: 10/11B}
\left(K_X+\frac{10}{11}B\right)\cdot (K_X+B)=0.
\end{equation}
Since $K_X+B$ is ample and $K_X+\frac{10}{11}B$ is pseudo-effective, we have $K_X+\frac{10}{11}B\equiv 0$. By the abundance for lc surface pairs, $K_X+\frac{10}{11}B\sim_{\mathbb Q}0$. 

(3) The dual graph of $\Supp f^{-1}(B)$ takes the form
\[
\begin{tikzpicture}
    \node[wbullet, label=below: $\widetilde B$] (B) at (0,0){};
        \node[wbullet, label=right: $E_1$] (E1) at (0,1){};
    \node[draw, shape=ellipse] (Ga1) at (0,2){$\Gamma_1'$};
    \node[wbullet, label=below: $E_3$] (E3) at (-1, 0){};
    \node[draw, shape=ellipse](Ga3) at (-2, 0){$\Gamma_3'$};
    \node[wbullet, label=below: $E_2$] (E2) at (1, 0){};
        \node[draw, shape=ellipse] (Ga2) at (2, 0){$\Gamma_2'$};
    \draw (B)--(E1)--(Ga1);
    \draw (B)--(E2)--(Ga2);
    \draw (B)--(E3)--(Ga3);
\end{tikzpicture}
\]
where for each $1\leq i\leq 3$, $E_i$ is the component of $f^{-1}(x_i)$ intersecting $\widetilde B$, and $\Gamma_i$ is a (possibly empty) tree, so that $E_i\cup \Gamma_i$ is the dual graph of $f^{-1}(x_i)$. 

Let $e_i=-E_i^2$ and $q_i=\det(\Gamma_i)$. Then $e_i\geq 2$, and the quotient singularity $x_i\in X$ is of type $\frac{1}{n_i}(1, q_i)$. By \eqref{eq: 10/11B}, we have
\[
B^2=11\cdot\frac{1}{11}B\cdot(K_X+B) = 11(K_X+B)^2=\frac{11}{42}.
\]
and hence, as in \cite[Lemma~4.6]{LS23}, 
\[
\ZZ\owns\widetilde B^2 = B^2 - \sum_{1\leq i\leq 3}\frac{q_i}{n_i} = \frac{11}{42} - \frac{q_1}{2} - \frac{q_2}{3} - \frac{q_3}{7}.
\]
A simple enumeration gives us 
\[
(q_1, q_2, q_3) = (1, 1, 3),\, (e_1, e_2, e_3)=(2,3,3), \text{ and } \left(\widetilde B\right)^2=-1.
\]
Correspondingly, we have $\Gamma_1=\Gamma_2=\emptyset$,  $\Gamma_3$ is a chain of two $(-2)$-curves, and the dual graph of $f^{-1}(B)$ is as follows:
\[
\begin{tikzpicture}
    \node[wbullet, label=below: $\widetilde B$] (B) at (0,0){};
    \node[wbullet, label=below: $3$] (E2) at (1, 0){};
    \node[wbullet, label=right: $2$] (E1) at (0,1){};
    \node[wbullet, label=below: $3$] (E3) at (-1, 0){};
    \node[wbullet, label=below: $2$] (E31) at (-2, 0){};
    \node[wbullet, label=below: $2$] (E32) at (-3, 0){};
    \draw (B)--(E1);
    \draw (B)--(E2);
    \draw (B)--(E3)--(E31)--(E32);
\end{tikzpicture}
\]
\end{proof}

\begin{proof}[Proof of Theorem~\ref{thm: 462}]
We divide the proof into several steps.

\medskip

\noindent\textbf{Step 1}. In this step, we set up a ramified cover of $X$. 

Recall from Lemma \ref{lem: basic property log surface min vol} that $B$ is a smooth rational curve, $(X,B)$ is plt, $X$ is klt, and $K_X+\frac{10}{11}B\sim_{\mathbb Q}0$. Let $m$ be the minimal positive integer such that $$11m\left(K_X+\frac{10}{11}B\right)\sim 0.$$ This relation induces a ramified $(\ZZ/11m\ZZ)$-cover $\pi\colon \bar S\rightarrow X$ such that $\bar S$ has canonical singularities, $K_{\bar S}\sim 0$, and $B$ is the branch divisor of $\pi$.
By \cite[Corollary~2.51]{Kol13}, for any closed point $\bar s\in \bar S$, $$\pi\colon \left(\bar S\ni\bar s\right)\rightarrow \left(X\ni\pi(\bar s), \frac{10}{11}B\right)$$ is the index one cover.

Let $G$ be the Galois group $\Gal(\bar S/X)$, and $\sigma$ a generator of $G$. Then the stabilizer over $B$ is $\langle \sigma^m\rangle$, which has order 11. Let $G$ be the Galois group $\Gal(\bar S/X)$, and $\sigma$ a generator of $G$. Then the stabilizer over $B$ is $\langle \sigma^m\rangle$, which has order 11. Since $\bar S$ has canonical singularities and $K_{\bar S}\sim 0$, $\bar S$ is birationally a K3 surface or an abelian surface. But the latter case does not occur, since an abelian surface cannot have an automorphism of order $11$ fixing a point (\cite[Theorems 13.4.4 and 13.4.5]{BL04}). The surface $\bar S$ being birationally K3, we infer that $m\leq 6$ by \cite[Theorem 1.5]{OZ11}.

By the minimality of $m$, $G$ does not contain any non-trivial symplectic automorphism subgroup. In other words, the representation $$G\rightarrow \GL(H^0(\bar S, K_{\bar S}))\cong\mathbb{C}^*$$ is injective. 
 
Let $\pi''\colon \bar S\rightarrow\bar T=\bar S/\langle \sigma^{11}\rangle$ and $\pi'\colon \bar T\rightarrow X$ be the intermediate quotient maps, and $h\colon S\rightarrow\bar S$ the minimal resolution. Let $B_{\bar S}$ (resp.~$B_{\bar T}$, resp.~$B_S$) be the (reduced) preimages of $B$ in $\bar S$ (resp.~$\bar T$, resp.~$S$) respectively. We record the morphisms in the following commutative diagram:
\[
\begin{tikzcd}
S\arrow[r, "h"]& \bar S \arrow[rr, bend left = 40, "\pi"] \arrow[r, "\pi''"] &\bar T \arrow[r, "\pi'"] & X \\
B_S\arrow[u, symbol=\subset] \arrow[r] &B_{\bar S}\arrow[u, symbol=\subset]\arrow[r]& B_{\bar T}\arrow[u, symbol=\subset]\arrow[r] & B\arrow[u, symbol=\subset] \\
\end{tikzcd}
\] 
where
\begin{itemize}
\item $\pi$, $\pi'$ and $\pi''$ are finite morphisms of degrees $11m$, $11$ and  $m$ respectively,
\item  $\pi''$ is \'etale outside finitely many points of $\bar S$, and
\item  $\pi'$ is totally ramified over $B$.
\end{itemize}
We have the following easy observations:
\begin{itemize}
    \item The action of $G$ lifts to $S$.
    \item The restriction $h|_{B_S}\colon B_S\rightarrow B_{\bar S}$ of the resolution $h$ is birational.
    \item Since $\pi'$ is totally ramified over the smooth rational curve $B$, the restriction $\pi'|_{B_{\bar T}}\colon B_{\bar T}\rightarrow B$ is an isomorphism. 
\end{itemize}  
\medskip

\noindent\textbf{Step 2}.  In this step, we show that $m=1$ (hence $\bar S=\bar T$), and $S$ together with the action of $G$ is uniquely determined as in \cite[Theorem~7.3]{AST11}. 

By Lemma \ref{lem: basic property log surface min vol}, $B$ contains exactly three singular points $x_1,x_2,x_3$ of $X$, which are cyclic quotient singularities of order $n_1=2, n_2=3,n_3=7$ respectively. Since $\pi'^*(K_X+B) = K_{\bar T}+B_{\bar T}$ and $(X,B)$ is plt, $(\bar T,B_{\bar T})$ is plt. Let $\bar t_i\in \bar T$ be the inverse image of $x_i$ in $\bar T$ for $1\leq i\leq 3$. Then the Cartier index of $K_{\bar T}+B_{\bar T}$ near $\bar t_i$ (resp.~order of $\bar t_i\in \bar T$) is the same as that of $K_X+B$ near $x_i$ (resp.~order of $x_i\in X$), which is $n_i$ by \cite[Lemma 7.5]{LS23}. 

In particular, $\bar T$ is has an $A_1$-singularity at $\bar t_1$. It follows that $\pi''\colon \bar S\rightarrow\bar T$ is \'etale over $x_1$, and $\bar S$ has $A_1$-singularities over $x_1$. Note also that $K_{\bar T}$ is Cartier near $\bar t_3$, since otherwise the local degree of $\pi''$ over $\bar t_3$ would be $7$, exceeding the global degree $\deg\pi'' = m\leq 6$. Thus the induced morphism $B_{\bar S}\rightarrow B_{\bar T}$ is not ramified over $\bar t_3$. Therefore, $\pi$ is branched over at most two points on $B$, and the components of $B_{\bar S}$ and $B_S$ are still rational curves. In particular, the order $11$ non-symplectic automorphism $\sigma^m\in \Aut(S)$ fixes a rational curve. 

Then the surface $S$ together with the automorphism $\sigma^m$ are uniquely determined as in Lemma~\ref{lem: K3}, and $m\leq 2$ by \cite[Theorem 1.5]{OZ11}. Thus $K_{\bar T}$ is Cartier near $\bar t_2$, so $\bar S\rightarrow \bar T$ and hence $B_{\bar S}\rightarrow B_{\bar T}$ is not ramified over $\bar t_2$ for degree reasons again. Thus $\pi$ is branched over at most one point over $B$, and $B_{\bar S}$ and $B_S$ has $m$ irreducible components. By \cite[Theorem 7.3]{AST11}, $B_S$ has exactly one component. Therefore, $m=1$, and $\bar T\cong\bar S$.

\medskip

\noindent\textbf{Step 3}. In this step we show that the birational morphism $h\colon S\rightarrow \bar S$ is uniquely determined up to automorphisms of $S$ and $\bar S$.

By Step 2, the order of $\bar s_i\in \bar S$ is $n_i$ for $1\leq i\leq 3$. Since $\bar S$ has at most canonical singularities, $\bar s_i\in \bar S$ is of type $\frac{1}{n_i}(1, n_i-1)$. Therefore, $E_i:=h^{-1}(\bar s_i)$ is a chain of $n_i-1$ $(-2)$-curves for $1\leq i\leq 3$, which we view as a reduced Weil divisor, and the dual graph of $B_S+\sum_{1\leq i\leq 3} E_i$ is as follows:
\[
\begin{tikzpicture}
    \node[wbullet, label=below: $B_S$] (B) at (0,0){};
    \node[wbullet, label=right: $2$] (E1) at (0,1){};
    \node[wbullet, label=below: $2$] (E21) at (1, 0){};
    \node[wbullet, label=below: $2$] (E22) at (2, 0){};
    \node[wbullet, label=below: $2$] (E31) at (-1, 0){};
    \node[wbullet, label=below: $2$] (E32) at (-2, 0){};
    \node[wbullet, label=below: $2$] (E33) at (-3, 0){};
    \node[wbullet, label=below: $2$] (E34) at (-4, 0){};
    \node[wbullet, label=below: $2$] (E35) at (-5, 0){};
    \node[wbullet, label=below: $2$] (E36) at (-6, 0){};
    \draw (B)--(E1);
    \draw (B)--(E21)--(E22);
    \draw (B)--(E31)--(E32)--(E33)--(E34)--(E35)--(E36);
\end{tikzpicture}
\]
where the smooth rational curve $B_S$ is also a $(-2)$-curve by the adjunction formula. By Lemma~\ref{lem: K3}, $(S, B_S+\sum_{1\leq i\leq 3} E_i)$ is uniquely determined up to  automorphism.

\medskip

\noindent\textbf{Step 4}. Note that $(K_{\bar S}+B_{\bar S}) = \pi^*(K_X+B)$ is ample, and $(\bar S, B_{\bar S})$ is the ample model of $(S, B_S+\sum_{1\leq i\leq 3} E_i)$. Hence the uniqueness of  $(\bar S, B_{\bar S})$ follows from that of $(S, B_S+\sum_{1\leq i\leq 3} E_i)$. Also the actions of the automorphism $\sigma$ on $S$ and $\bar S$ are uniquely determined. It follows that the quotient surface pair $(X, B)$ is uniquely determined, and hence must be the same as the surface pair $(U, B_U)$ in Example~\ref{ex: 462}.
\end{proof}

\begin{cor}\label{cor: normal}
Let $X$ be a stable surface that is not normal. Then $K_X^2>\frac{1}{462}$.
\end{cor}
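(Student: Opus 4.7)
The plan is to normalize $X$ and reduce to Theorems~\ref{thm: LS 462} and~\ref{thm: 462}. Let $\pi\colon \widetilde{X}\to X$ be the normalization of the stable (hence semi-log-canonical) surface $X$, and let $D$ be the conductor divisor on $\widetilde{X}$. By Koll\'ar's theory of slc gluing, $D$ is a non-zero reduced Weil divisor, the pair $(\widetilde{X},D)$ is lc, and $K_{\widetilde{X}}+D=\pi^*K_X$; since $\pi$ is finite and $K_X$ is ample, $K_{\widetilde{X}}+D$ is ample. Write the connected components of $\widetilde{X}$ as $\widetilde{X}_1,\ldots,\widetilde{X}_k$ with induced conductors $D_i$. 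Because $X$ is connected and not normal, every $D_i$ is non-zero, since otherwise $\widetilde{X}_i$ would embed isomorphically as a connected component of $X$, contradicting either the connectedness of $X$ or its non-normality.

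Now
\[
K_X^2=(K_{\widetilde{X}}+D)^2=\sum_{i=1}^k (K_{\widetilde{X}_i}+D_i)^2\geq \frac{k}{462}
\]
by Theorem~\ref{thm: LS 462}. If $k\geq 2$ then $K_X^2\geq\frac{2}{462}>\frac{1}{462}$, as required. In the remaining case $k=1$, I will suppose for contradiction that $K_X^2=\frac{1}{462}$. Then Theorem~\ref{thm: 462} applies to $(\widetilde{X},D)$ and identifies it with $(U,B_U)$ of Example~\ref{ex: 462}. By Lemma~\ref{lem: basic property log surface min vol}, the conductor $D$ is a smooth rational curve, and passes through exactly three cyclic quotient singularities $x_1,x_2,x_3$ of $\widetilde{X}$, of orders $2,3,7$ respectively.

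To derive a contradiction, I use the gluing involution $\tau\colon D\to D$ that recovers $X$ from its normalization. Since $X\neq \widetilde{X}$, the involution $\tau$ is non-trivial, and by the slc compatibility condition it preserves the different
\[
\Diff_D(0)=\tfrac{1}{2}[x_1]+\tfrac{2}{3}[x_2]+\tfrac{6}{7}[x_3],
\]
whose three coefficients are pairwise distinct. Therefore $\tau$ must fix each of $x_1,x_2,x_3$ individually. However, a non-trivial involution of $\PP^1$ has exactly two fixed points, giving the desired contradiction and forcing $K_X^2>\frac{1}{462}$.

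The principal obstacle I anticipate is justifying the slc gluing data cleanly, namely the existence of $\tau$ and its compatibility with the different on $D$; these are not developed in the excerpt and require invoking Koll\'ar's framework for semi-log-canonical surfaces. Once this input is granted, the argument is a direct combination of Theorems~\ref{thm: LS 462} and~\ref{thm: 462} with the explicit singularity data of Lemma~\ref{lem: basic property log surface min vol} together with an elementary fixed-point count on $\PP^1$.
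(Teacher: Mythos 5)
Your proof is correct and takes essentially the same approach as the paper: normalize, apply Theorem~\ref{thm: LS 462} to each component of the normalization, reduce to the case of a single component achieving equality, and derive a contradiction from the gluing involution on the smooth rational conductor curve, which must fix the three distinct points carrying the different and hence cannot be a non-trivial involution of $\PP^1$. The only cosmetic difference is that you spell out the case $k\geq 2$ separately, whereas the paper absorbs it into one appeal to Theorems~\ref{thm: LS 462} and~\ref{thm: 462}; the decisive fixed-point argument is identical.
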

\begin{proof}
Suppose on the contrary that $K_X^2\leq\frac{1}{462}$. Take the normalization $\nu\colon \bar X\rightarrow X$. Then the conductor divisor $\bar D\subset \bar X$ is nonempty, and $(\bar X, \bar D)=\sqcup_{1\leq i\leq m} (\bar X_i, \bar D_i)$ is a union of log canonical surfaces $(\bar X_i, \bar D_i)$ with $K_{\bar X_i}+\bar D_i$ ample and 
\[
\sum_{1\leq i\leq m}(K_{\bar X_i}+\bar D_i)^2 = K_X^2\leq \frac{1}{462}.
\]
By Theorems~\ref{thm: LS 462} and \ref{thm: 462}, we have $m=1$, $K_X^2=(K_{\bar X}+\bar D)^2= \frac{1}{462}$, and $\bar D$ is a smooth rational curve such that the different is $\Diff_{\bar D}(0)=\frac{1}{2}\bar x_1 + \frac{2}{3}\bar x_2 + \frac{6}{7} \bar x_3$, where the $\bar x_i$'s are the singular points of $\bar X$ lying on $\bar D$. The generically two-to-one morphism $\bar D\rightarrow \nu(\bar D)$ induces an involution $\tau$ on $\bar D$ preserving $\Diff_{\bar D}(0)$. But then $\tau$ fixes the three points $\bar x_i$, $1\leq i\leq 3$, and hence must be the identity, which is a contradiction.
\end{proof}

\section{The minimal volume of surfaces with \texorpdfstring{$K_X$}{} ample and \texorpdfstring{$\Nklt(X)\neq \emptyset$}{}}\label{sec: 825}
In this section, we find the minimal volume of surfaces $X$ with $K_X$ ample and $\Nklt(X)\neq \emptyset$. The proof consists of two major parts: The first part is the construction of the lc surface $X$ realizing the lower bound (Example~\ref{ex: 825}); the second more theoretic part  uses the technique of taking pseudo-effective thresholds and nef thresholds to prove the lower bound $\min\KK^2_\nklt\geq \frac{1}{825}$.  

\subsection{The surface \texorpdfstring{$V$}{} with \texorpdfstring{$K_V$}{} ample, \texorpdfstring{$\Nklt(V)\neq \emptyset$}{}, and \texorpdfstring{$K_V^2=\frac{1}{825}$}{}}

In this subsection, we construct the surface realizing the equality of Theorem~\ref{thm: 825}. The construction is parallel to Example~\ref{ex: 462}; in fact, the minimal resolution $\widetilde V$ of $V$ in Example~\ref{ex: 825} is obtained by further blowing up the surface $\widetilde U$ in Example~\ref{ex: 462}. For the convenience of the readers, we will spell out more details about this new example.
\begin{ex}[]\label{ex: 825}\label{ex: lc surface smallest volume}
 Let $L_0, L_1, L_2, L_3$ be four lines in general position on $\PP^2$. Let $g_V\colon \widetilde V\rightarrow \PP^2$ be the blow-up of some of the nodes of $\sum_{0\leq i\leq 3} L_i$, so that $g_V^{-1}(\sum_{0\leq i\leq 3} L_i)$ has the following dual graph
\begin{center}
\begin{tikzpicture}[font=\tiny, scale=5.5]
\node[bbullet, label=below:$L_0^{\widetilde V}$, label=left: $2(1){[\frac{10}{11}]}$] (L0) at (210:1) {};
\node[bbullet, label=below:$L_2^{\widetilde V}$,  label=right:${[\frac{9}{11}]}9(\frac{21}{25})$] (L2) at (330:1) {};
\node[bbullet, label=below:$L_1^{\widetilde V}$,  label=north east:$4(\frac{8}{11})$, label=north west:{$[\frac{8}{11}]$}](L1) at (0,0){};
\node[bbullet, label=right:$L_3^{\widetilde V}$, label=left:$2(\frac{6}{11})$, label=above: {$[\frac{6}{11}]$}] (L3) at (90:1) {};
\draw (L0) -- (L3)node[bbullet, pos=1/3, label=left:$2(\frac{1}{2})$, label=right:{$[\frac{5}{11}]$}](L03A){}node[wbullet, pos=2/3, label=left:$1(0)$, label=right:{$[0]$}](L03B){};
\draw (L0) -- (L1)
node[bbullet, pos=1/4, label=above:$3(\frac{2}{3})$, label=right:{$[\frac{7}{11}]$}](L02A){}
node[wbullet, pos=2/4, label=above: $1(0)$, label=right: {$[0]$}](L02B){}
node[bbullet, pos=3/4, label=above:$2(\frac{4}{11})$, label=right: {$[\frac{4}{11}]$}](L02C){};
\draw (L0) --(L2)
node[bbullet, pos=1/7, label=below:$2(\frac{5}{6})$, label=above:{$[\frac{8}{11}]$}](L01A){}
node[bbullet, pos=2/7, label=below:$2(\frac{4}{6})$, label=above:{$[\frac{6}{11}]$}](L01B){}
node[bbullet, pos=3/7, label=below:$2(\frac{3}{6})$, label=above:{$[\frac{4}{11}]$}](L01C){}
node[bbullet, pos=4/7, label=below:$2(\frac{2}{6})$, label=above:{$[\frac{2}{11}]$}](L01D){}
node[bbullet, pos=5/7, label=below:$2(\frac{1}{6})$, label=above:{$[0]$}](L01E){}
node[wbullet, pos=6/7, label=below:$1(0)$, label=above:{\color{red}$C_0[-\frac{2}{11}]$}](L01F){};
\draw (L2) -- (L1) node[bbullet, pos=1/4, label=above:$2(\frac{14}{25})$, label=left:{$[\frac{6}{11}]$}](L12A){}node[bbullet, pos=2/4, label=above:$2(\frac{7}{25})$, label=left:{$[\frac{3}{11}]$}](L12B){}node[wbullet, pos=3/4, label=above:$1(0)$, label=left:{$[0]$}](L12C){};
\draw (L1) -- (L3);
\draw (L2) -- (L3)node[wbullet, pos=1/4, label=right: $1(0)$, label=left:{$[0]$}](L23A){}node[bbullet, pos=2/4, label=right: $2(\frac{2}{11})$, label=left:{$[\frac{2}{11}]$}](L23B){}node[bbullet, pos=3/4, label=right:$2(\frac{4}{11})$, label=left:{[$\frac{4}{11}$]}](L23C){};
\end{tikzpicture}
\end{center}
where
\begin{itemize}
    \item $L_i^{\widetilde V}$ denotes the strict transforms of $L_i$ for $0\leq i\leq 3$;
    \item a white bullet denotes the exceptional $(-1)$-curve of the last blow-up over a node;
    \item a black bullet denotes either an $L_i^{\widetilde V}$ or a $(-e)$-curve with $e\geq 2$;
    \item the numbers outside the brackets denote the negative of the self-intersections of the corresponding curves; and
    \item the numbers in the square and round brackets denote the coefficients of the curve in the divisors $C_{\widetilde V}$ and $C_{\widetilde V}'$ (to be defined below) respectively.
\end{itemize} 
We will call the curves appearing on the above dual graph \emph{visible} curves on $\widetilde V$. Define the following $\QQ$-divisors on $\widetilde V$ supported on the visible curves:
\begin{itemize}
    \item $\widetilde C$ be the reduced divisor consisting of all the curves denoted by the black bullets.
    \item $C_{\widetilde V}$ is an effective $\QQ$-divisor with coefficients indicated in the round brackets.
    \item $C'_{\widetilde V}$ is a non-effective $\QQ$-divisor with coefficients indicated in the square brackets. There is exactly one curve, namely $C_0$, with negative coefficient in $C'_{\widetilde V}$. 
\end{itemize}
It is now straightforward to check the following properties:
\begin{enumerate}
    \item $C'_{\widetilde V} \leq C_{\widetilde V} \leq  \widetilde C$.
    \item $K_{\widetilde V} + C'_{\widetilde V} =g_V^*(K_{\PP^2}+ \frac{10}{11}L_0+\frac{8}{11}L_1 + \frac{9}{11}L_2 + \frac{6}{11}L_3)\sim_\QQ 0$. It follows that $K_{\widetilde V} + \widetilde C$ and $K_{\widetilde V} + C_{\widetilde V}$ are $\QQ$-linearly equivalent to the effective $\QQ$-divisors $\widetilde C-C'_{\widetilde V}$ and $C_{\widetilde V}-C'_{\widetilde V}$ respectively. 
    \item $K_{\widetilde V} + C_{\widetilde V}$ is nef since its intersection numbers with the visible curves are non-negative. In fact, for a visible curve $C$, one can check
    \[
    (K_{\widetilde V} + C_{\widetilde V})\cdot C 
    \begin{cases}
        =0 & \text{if $C$ is a black bullet}\\
        >0 & \text{if $C$ is not a black bullet}.
    \end{cases}
    \]
    \item $K_{\widetilde V} + C_{\widetilde V}$ is the positive part of $K_{\widetilde V} + \widetilde C$ in the Zariski decomposition.
    \item We can write $C_{\widetilde V}-C'_{\widetilde V} = \frac{2}{11} C_0 + D$, where $D$ is an effective $\QQ$-divisor supported on the visible curves corresponding to the black bullets. Therefore,
    \[
    (K_{\widetilde V} + C_{\widetilde V})^2 =(K_{\widetilde V} + C_{\widetilde V})\cdot\left(\frac{2}{11}C_0+D\right) = (K_{\widetilde V} + C_{\widetilde V})\cdot \left(\frac{2}{11}C_0\right) = \frac{2}{11}\left(\frac{1}{6}+\frac{21}{25}-1\right) =\frac{1}{825}.
    \]
    \item Since the visible curves form a simple normal crossing divisor on $\widetilde V$, the pairs $(\widetilde V, \widetilde C)$ and $(\widetilde V, B_{\widetilde V})$ are both lc. 
\end{enumerate}
Now take the ample model $\pi\colon \widetilde V\rightarrow V$ of $K_{\widetilde V}+ C_{\widetilde V}$. Since $K_{\widetilde V}+ C_{\widetilde V}$ is big and nef, and $(K_{\widetilde V}+ C_{\widetilde V})\cdot C_{\widetilde V}=0$, the boundary divisor $C_{\widetilde V}$ is contracted by $\pi$, and $K_V = \pi_*(K_{\widetilde V}+ C_{\widetilde V})$ is ample. We have $\pi^* K_V = K_{\widetilde V}+ C_{\widetilde V}$ and by (5), we have
\[
K_V^2 = (K_{\widetilde V}+ C_{\widetilde V})^2 = \frac{1}{825}
\]
The whole construction can be illustrated by the following diagram:
\[
\begin{tikzcd}
    (\widetilde V, \widetilde C) \arrow[r, symbol = \supset]\arrow[d, "g_V"']& (\widetilde V, C_{\widetilde V})  \arrow[d, "\pi"] \\
    (\PP^2, \sum_{i=0}^{3} L_i) & V
\end{tikzcd}
\]
Since exactly one component of $C_{\widetilde V}$, namely $L_0^{\widetilde V}$, has coefficient $1$, we infer that $X$ has a unique lc center $P_0=\pi(L_0^{\widetilde V})$ and two other non-canonical singularities $P_1=\pi_*(L_1^{\widetilde V})$ and $P_2=\pi_*(L_2^{\widetilde V})$, where $P_1$ and $P_2$ are quotient singularities of types $\frac{1}{22}(1,13)$ and $\frac{1}{25}(1,3)$ respectively. One can check as in \cite[Theorem 5.2]{AL19a} that $\pi$  contracts exactly visible curves corresponding to the black bullets of the dual graph. It follows that the Picard number $\rho(V)=2$, and $P_0, P_1, P_2$ are the only singularities of $V$.
\end{ex}

Recall that the canonical ring of a stable surface $X$ is given by $R(X, K_X):=\bigoplus_{n\geq 0} H^0(X, nK_X)$, which is finitely generated. It provides an embedding of $X$ into the weighted projective space $\Proj R(X,K_X)$, called the \emph{canonical embedding} in this paper. We utilized the Riemann--Roch theorem for singular surfaces, as presented by Blache in \cite{Bla95}, to compute the plurigenera of $V$. From our calculations, we figure out that $V$ is possibly a hypersurface of degree $86$ in the weighted projective space $\PP:=\PP(6,11,25,43)$. The following theorem confirms this hypothesis by examining the linear system $|\sO_{\PP}(86)|$, and it also sheds light on the moduli space $M_{\frac{1}{825}}$ of stable surfaces $X$ with $K_X^2=\frac{1}{825}$. 
\begin{thm}\label{thm: WPS86}
Let $x_0, x_1, x_2, x_3$ be the homogeneous coordinates of $\PP$ of degrees $6,11,25,43$ respectively. Then the following holds.
\begin{enumerate}
    \item Any hypersurface of degree $86$ in $\PP$ is projectively equivalent to some $V_{\vec{\epsilon},(s,t)}$ defined by
\[
H_{\vec{\epsilon}, (s,t)}:=\epsilon_1 x_3^2+ \epsilon_2 x_3 x_2 x_0^3+\epsilon_3 x_2^3x_1+ sx_2^2 x_0^6 +\epsilon_4 x_2x_1^5 x_0 +tx_1^4 x_0^7 =0
\]
with $\epsilon_i\in\{0,1\}$ for $1\leq i\leq 4$ such that $\epsilon_1\epsilon_2=0$ and $s, t\in \CC$.
\item $V_{\vec{\epsilon},(s,t)}$ is lc if and only if $\epsilon_1=\epsilon_3=\epsilon_4=1$,  $\epsilon_2=0$, and $s,t$ are not both zero. More precisely, setting $V_{(s,t)}:=V_{(1,0,1,1), (s,t)}$, the following holds.
\begin{enumerate}
    \item The canonical indices of $P_0, P_1, P_2$ in $V_{(s,t)}$ are $6,11,25$ respectively, and $V_{(s,t)}$ has at most $A_1$-singularities elsewhere;
    \item $V_{(s,t)}$ is klt if and only if $s\neq 0$.
\end{enumerate} 
\item For $(s,t)\in \CC\setminus\{(0,0)\}$, the two hypersurfaces $V_{(s,t)}$ and $V_{(s',t')}$ are projetively equivalent if and only if $(s:t)=(s':t')$ as points in $\PP^1$. 
\item For $(s,t)\in \CC\setminus\{(0,0)\}$, there is an isomorphism of graded $\CC$-algebras:
\[
R(V_{(s,t)}, K_{V_{(s,t)}}) \cong \CC[x_0, x_1, x_2, x_3]/(H_{(s,t)}).
\]
where $H_{(s,t)} = H_{(1,0,1,1), (s,t)}$ is the defining polynomial of $V_{(s,t)}$.
\item The induced morphism to the moduli space $\PP^1_{(s:t)}\rightarrow M_{\frac{1}{825}}$ is a homeomorphism onto the image.
\end{enumerate}
\end{thm}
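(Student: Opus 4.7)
The plan is to establish the five parts sequentially using the explicit structure of $\PP=\PP(6,11,25,43)$. For part (1), I first enumerate the monomials of weighted degree $86$ by solving $6a_0+11a_1+25a_2+43a_3=86$ in non-negative integers; a direct check produces exactly the six monomials displayed in $H_{\vec{\epsilon},(s,t)}$. The effective torus acting on $\PP$ is $(\CC^*)^4/\CC^*\cong(\CC^*)^3$, and when $\epsilon_1\neq 0$ the linear substitution $x_3\mapsto x_3+cx_2x_0^3$ contributes one more parameter that eliminates the $x_3x_2x_0^3$ coefficient via completing the square. Together these give a four-dimensional group acting on the five-dimensional projective space of degree-$86$ polynomials, so four of the six coefficients can be normalized to values in $\{0,1\}$ (namely $\epsilon_1,\epsilon_2,\epsilon_3,\epsilon_4$) while $s,t$ remain as free parameters.

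For part (2), the singular locus of $\PP$ consists of the four coordinate vertices, and I analyze the local equation of $V_{\vec\epsilon,(s,t)}$ at each. At $[0\!:\!0\!:\!0\!:\!1]$ (the $\tfrac{1}{43}$-point), $V$ passes through this point iff $\epsilon_1=0$, in which case the local multiplicity is too high to be compatible with lc after passing to the $\tfrac{1}{43}$-quotient; thus $\epsilon_1=1$ is forced, and consequently $\epsilon_2=0$ by the normal form. Similar analyses at $[0\!:\!0\!:\!1\!:\!0]$ and $[0\!:\!1\!:\!0\!:\!0]$ force $\epsilon_3=1$ and $\epsilon_4=1$ respectively. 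Finally, at $[1\!:\!0\!:\!0\!:\!0]$ (the $\tfrac{1}{6}$-point), the local equation reads $x_3^2+sx_2^2+\text{higher}$ modulo the $\tfrac{1}{6}(5,1,1)$-action: when $s\neq 0$ this defines an $A_1$-singularity upstairs, which is klt of index $6$ after the quotient; when $s=0$ and $t\neq 0$ the remaining lower-order terms yield a non-klt lc singularity; when $s=t=0$ the resulting singularity is non-isolated, and $V$ fails to be lc. A Jacobian check at non-vertex points shows all other singularities of $V_{(s,t)}$ are at worst $A_1$. This establishes the canonical indices asserted in (a); for (b), the identification of the unique non-klt member ($s=0$) with the surface $V$ of Example~\ref{ex: 825} follows from the uniqueness assertion of Theorem~\ref{thm: 825}.

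For parts (3) and (4), imposing the three normalizations $\epsilon_1=\epsilon_3=\epsilon_4=1$ cuts out a one-parameter residual subgroup of the effective torus $(\CC^*)^3$, and a direct weight computation shows this subgroup acts on $(s,t)$ by a common scaling $(s,t)\mapsto(\mu s,\mu t)$ for $\mu\in\CC^*$; hence the projective invariant is precisely $(s:t)\in\PP^1$, proving part (3). For part (4), adjunction gives $K_{V_{(s,t)}}=\mathcal{O}_{V_{(s,t)}}(86-85)=\mathcal{O}_{V_{(s,t)}}(1)$, which is ample by part (2), so the short exact sequence
\[
0\to\mathcal{O}_{\PP}(m-86)\to\mathcal{O}_{\PP}(m)\to\mathcal{O}_{V_{(s,t)}}(m)\to 0
\]
combined with the vanishing $H^1(\PP,\mathcal{O}(k))=0$ on weighted projective space yields the ring isomorphism $R(V_{(s,t)},K_{V_{(s,t)}})\cong\CC[x_0,x_1,x_2,x_3]/(H_{(s,t)})$.

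For part (5), the family $\{V_{(s,t)}\}$ is a flat family of stable surfaces of volume $\tfrac{1}{825}$ over $\bA^2\setminus\{(0,0)\}$, and the $\CC^*$-equivariance from part (3) descends it to a morphism $\varphi\colon\PP^1\to M_{\frac{1}{825}}$. Injectivity of $\varphi$ is precisely part (3), and since $\PP^1$ is compact while $M_{\frac{1}{825}}$ is Hausdorff (as a proper KSBA moduli space, cf.~\cite{Kol23}), $\varphi$ is a closed continuous injection, whence a homeomorphism onto its image. The principal obstacle throughout is the singularity analysis in part (2)—particularly the precise vanishing conditions at $[1\!:\!0\!:\!0\!:\!0]$ and the identification of the unique non-klt member with Example~\ref{ex: 825}, which relies crucially on the uniqueness assertion of Theorem~\ref{thm: 825}.
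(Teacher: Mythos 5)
Your proposal follows essentially the same approach as the paper's proof: enumerate the six degree-$86$ monomials, use the torus $(\CC^*)^3$ together with the unipotent substitution $x_3\mapsto x_3+cx_2x_0^3$ to normalize, analyze local equations at the four coordinate vertices via the affine covers $S^i_{(s,t)}$, apply weighted adjunction and the vanishing of $H^1$ on $\PP$ for the canonical ring, and descend the family to a map from $\PP^1$ to the moduli space using the injectivity from part (3).

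There is, however, one genuinely incorrect justification in your treatment of part (2): you claim that when $s=t=0$ the singularity of $V_{(0,0)}$ is \emph{non-isolated}, and deduce failure of lc from this. That is false — a direct Jacobian computation on $\{x_3^2 + x_2^3x_1 + x_2x_1^5x_0=0\}$ shows $\Sing V_{(0,0)}=\{P_0,P_1\}$, so the singular locus is finite. The correct reason $V_{(0,0)}$ fails to be lc is a local computation at $P_0$: the affine cover $S^0_{(0,0)}=\{x_3^2 + x_2^3x_1 + x_2x_1^5=0\}\subset\CC^3$ has an isolated but non-lc singularity at the origin, which one sees for instance by a weighted blow-up with weights $(2,4,7)$ on $(x_1,x_2,x_3)$: the exceptional divisor has log discrepancy $2+4+7-14=-1<0$ for the pair $(\CC^3, S^0_{(0,0)})$, and inversion of adjunction then shows $(S^0_{(0,0)}\ni 0)$ is not lc. (The paper handles this case somewhat tersely under "Similarly," but never makes the incorrect non-isolatedness claim.) A smaller point: the identification of $V_{(0,1)}$ with the surface of Example~\ref{ex: 825}, which you fold into part (2), is not part of Theorem~\ref{thm: WPS86} — it is stated separately in the subsequent corollary, precisely because it needs the uniqueness part of Theorem~\ref{thm: 825}, which is proved after this theorem.
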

\begin{proof}
(1) A weighted homogeneous polynomial of degree $86$ in $\CC[x_0, x_1, x_2, x_3]$ can be written as 
\[
H = a_1 x_3^2+ a_2  x_3 x_2 x_0^3 + a_3 x_2^3x_1+a_4 x_2^2 x_0^6 +a_5 x_2x_1^5 x_0 +a_6 x_1^4 x_0^7
\]
with $a_i\in \CC$ are not all zero. An automorphism $\sigma\in \Aut(\PP)$ can be written as
\[
\sigma(x_0, x_1, x_2, x_3) = (c_0 x_0, c_1 x_1, c_2 x_2, c_3 x_3  + d x_2 x_0^3)
\]
with $c_i\in \CC\setminus\{0\}$ for $0\leq i\leq 3$ and $d\in \CC$, and we also use $\sigma_{(\Vec{c},d)}$ to denote this $\sigma$. Two hypersurfaces $\{H=0\}$ and $\{H'=0\}$ are projectively equivalent if for some $\sigma\in\Aut(\PP)$, 
\[
H'(x_0, x_1, x_2, x_3)=(\sigma^*H)(x_0, x_1, x_2, x_3):=H\circ\sigma(x_0, x_1, x_2, x_3).
\]  
If $a_1\neq 0$, then one can use the automorphism $(x_0, x_1, x_2, x_3)\mapsto (x_0, x_1, x_2, x_3-\frac{a_2}{2a_1}x_2x_0^3)$ to eliminate the term $a_2x_3x_2x_0^3$ in $H$. Then it is straightforward to check that there is $(c_0, c_1, c_2, c_3)\in (\CC\setminus\{0\})^4$ such that 
\[
\sigma_{(\vec{c},0)}^*H = \lambda(\epsilon_{1} x_3^2+\epsilon_2x_3x_2x_0^3 + \epsilon_{3}   x_2^3x_1+ sx_2^2 x_0^6 +\epsilon_{4}  x_2x_1^5 x_0 +t x_1^4 x_0^7)
\]
with $\epsilon_i\in \{0,1\}$ for $1\leq i\leq 4$ satisfying $\epsilon_1\epsilon_2=0$, $t\in \CC$ and $\lambda\in \CC\setminus\{0\}$. 

\medskip

(2) Since the weights $n_1=6, n_2=11, n_3=25, n_4=43$ are prime to each other, $\PP$ is well-formed, with exactly the four coodinate points $P_i=\{x_j=x_k=x_l=0\}$ as singularities, where $\{i,j,k,l\}=\{0,1,2,3\}$ (\cite[5.15]{IF00}). Note that $P_i\in V_{\vec{\epsilon},(s,t)}$ for $0\leq i\leq 2$, and $P_3 \in V_{\vec{\epsilon},(s,t)}$ if and only if $\epsilon_1=0$. 

For $0\leq i\leq 3$, let $H_{\vec{\epsilon},(s,t)}^{i}$ be the polynomial in $x_j, x_k, x_l$ obtained by setting $x_i=1$ in $H_{\vec{\epsilon},(s,t)}$, where $\{i,j,k,l\}=\{0, 1,2,3\}$. Then
\[
S^i_{\vec{\epsilon},(s,t)} :=\{H_{\vec{\epsilon},(s,t)}^i=0\} \subset \CC^3
\]
is a hypersurface with affine coordinates $x_j, x_k, x_l$, and $V_{\vec{\epsilon},(s,t)}\cap\{x_i\neq 0\}$ is obtained as the quotient of $(S^i_{\vec{\epsilon},(s,t)}\ni 0)$ by a $(\ZZ/n_i\ZZ)$-action that is free away from the origin. Moreover, $0\in S^i_{\vec{\epsilon},(s,t)}$ if and only if $P_i\in V_{\vec{\epsilon},(s,t)}$. By \cite[Proposition 5.20]{KM98}, $(V_{\vec{\epsilon},(s,t)}\ni P_i)$ is lc (resp.~klt) if and only if $(S^i_{\vec{\epsilon},(s,t)}\ni 0)$ is so. 

If $\epsilon_1 =0$, then $(V_{\vec{\epsilon},(s,t)}\ni P_3)$ is the quotient of the hypersurface singularity $(S^3_{\vec{\epsilon},(s,t)}\ni 0)$, where $S^3_{\vec{\epsilon},(s,t)}=\{\epsilon_2 x_3 x_2 x_0^3+\epsilon_3 x_2^3x_1+ sx_2^2 x_0^6 +\epsilon_4 x_2x_1^5 x_0 +t x_1^4 x_0^7 = 0\}\subset\CC^3$. Since the multiplicity of  the hypersurface $S^3_{\vec{\epsilon},(s,t)}$ at $0$ is at least $4$, it cannot be lc. It follows that $(V_{\vec{\epsilon},(s,t)}\ni P_3)$ is not lc. Similarly, if $\epsilon_3 = 0$ (resp.~$\epsilon_4=0$, resp.~$s=t=0$), then $(V_{\vec{\epsilon},(s,t)}\ni P_i)$ is not lc for $i=2$ (resp.~$i=1$, resp.~$i=0$).

Now suppose that $\epsilon_1=\epsilon_3=\epsilon_4=1$,  $\epsilon_2=0$, and $s,t$ are not both zero. In this case, for simplicity of notation, we denote 
\[
V_{(s,t)}= V_{\vec{\epsilon},(s,t)},\,\, 
H_{(s,t)} =H_{\vec{\epsilon},(s,t)},\,\,
S^i_{(s,t)} = S^i_{\vec{\epsilon},(s,t)},\,\, H^i_{(s,t)} =H^i_{\vec{\epsilon},(s,t)}. 
\]
where $0\leq i\leq 3$. We need to show that $V_{(s,t)}$ is lc. Recall that $V_{(s,t)}$ is now defined by
\[
x_3^2+ x_2^3x_1+ sx_2^2 x_0^6 + x_2x_1^5 x_0 +tx_1^4 x_0^7 = 0,\,\,(s,t)\in \CC^2\setminus\{(0,0)\}
\]
and thus $P_3\notin V_{(s,t)}$. 

If $s=0$, then we may assume that $t=1$ by applying an appropriate diagonal projective transformation. It can be checked as above that $(S^0_{(0,1)}\ni 0)$ is a hypersurface elliptic singularity of type $\widetilde E_7$ (see \cite[Theorem~7.6.4]{Ish18}), $(S^1_{(0,1)}\ni 0)$ an $A_1$-singularity,  $(S^2_{(0,1)}\ni 0)$ smooth. It follows that $V_{(0,1)}$ is lc at $P_i$ for $0\leq i\leq 2$, and non-klt at $P_0$. One can use the Jacobian criterion, by hand or with the help of computer algebra such as SAGE, to see that $V_{(0,1)}$ does not have any other singularities.

If $s\neq 0$, then one can check as before that $(S^i_{(s,t)}\ni 0)$ is canonical for $0\leq i\leq 2$, and it follows that $(V_{(s,t)}\ni P_i)$ is klt. It is more involved to check that $V_{(s,t)}\setminus\{P_0, P_1, P_2\}$ is klt, but the idea is similar. In fact, we can prove a stronger result:
\begin{claim}
If $s\neq 0$, then $V_{(s,t)}$ has at most $A_1$-singularities away from the set $\{P_0,P_1,P_2\}$. 
\end{claim}
\begin{proof}[Proof of the claim.]
For $0\leq i\leq 2$, the morphism $S^i_{(s,t)}\rightarrow V_{(s,t)}\cap \{x_i\neq 0\}$
is \'etale away from $P_i$. Thus it suffices to show that $S^i_{(s,t)}\setminus\{0\}$ has at most $A_1$-singularities. For this, consider the double cover 
\[
\pi_i\colon S^i_{(s,t)}\rightarrow \CC^2, \, (x_j, x_k, x_3)\mapsto (x_j, x_k)
\]
where $\{i,j,k\}=\{0,1,2\}$. The branch curve 
of $\pi_i$ is given by $B_i=\{H^i_{(s,t)}(x_j, x_k, 0)=0\}$. With the help of SAGE or by hand, one can check that the zeros of the following ideal
\[
J_i:=\left(H^i_{(s,t)}(x_j, x_k, 0), \frac{\partial H^i_{(s,t)}}{\partial x_j}(x_j, x_k, 0), \frac{\partial H^i_{(s,t)}}{\partial x_k}(x_j, x_k, 0), \Hess\left(H^i_{(s,t)}(x_j, x_k, 0)\right)\right)
\]
are contained in $\{(0,0)\}$, where the last generator of $J_i$ is the Hessian of $H^i_{(s,t)}(x_j, x_k, 0)$ with respect to $x_j$ and $x_k$. This means that $B_i$ has at most nodes as singularities away from $(0,0)$, and hence the double cover $S^i_{(s,t)}$ has at most $A_1$-singularities away from the origin. 
\end{proof}

Note that for any $(s,t)\in \CC^2\setminus\{(0,0)\}$ and $0\leq i\leq 2$, $(S^i_{(s,t)}\ni 0)$ is the canonical cover of $(V_{(s,t)}\ni P_i)$, and thus the canonical index of $(V_{(s,t)}\ni P_i)$ is $n_i$. 
This finishes the proof of the assertions in (2) of the theorem.

\medskip

(3) Projective transformations sending $V_{(s',t')}$  to $V_{(s, t)}$ must be of the diagonal form, that is, of the form $\sigma(x_0, x_1, x_2, x_3)=(c_0 x_0, c_1 x_1, c_2 x_2, c_3 x_3)$ for some $c_0, c_1, c_2, c_3\in \CC\setminus\{0\}$. Then $
\sigma(V_{(s', t')}) = V_{(s,t)}$ if and only if $\sigma^*H_{(s,t)}=\lambda H_{(s',t')}$ for some $\lambda\in \CC\setminus\{0\}$, that is, comparing the coefficients of each monomial on both sides, we have the following equations:
\[
c_3^2 = c_2^3c_1 = c_2 c_1^5 =\lambda,\,\, c_2^2c_0^6s=\lambda s', \,\, c_1^4c_0^7 t= \lambda t'
\]
which gives $s=\frac{\lambda^{25}}{c_2^{86}}s'$ and $t=\frac{\lambda^{25}}{c_2^{86}}t'$ with $\lambda$ and $c_2$ freely chosen. The assertion (3) follows.

\medskip

(4) The weighted projective space $\PP$ and the hypersurface $V_{(s,t)}$ are both well-formed (\cite[6.10]{IF00}). Thus we may apply the adjunction formula to get 
\[
\sO_{V_{(s,t)}}(K_{V_{(s,t)}}) =\sO_{V_{(s,t)}}(86-6-11-25-43) = \sO_{V_{(s,t)}}(1).
\]
By the short exact sequence
\[
0\rightarrow \sO_{\PP}(n-86) \xrightarrow{H_{(s,t)}} \sO_{\PP}(n)\rightarrow  \sO_{V_{(s,t)}}(n)\rightarrow 0
\]
and the vanishing $H^1(\PP, \sO_{\PP}(n-86)) =0$ for each $n\in \ZZ$ (\cite{Dol81}), we obtain the required isomorphism of graded $\CC$-algebras
\[
R\left(V_{(s,t)}, K_{V_{(s,t)}}\right)\cong \bigoplus_{n\geq 0} H^0\left(V_{(s,t)}, \sO_{V_{(s,t)}}(n)\right)\cong\CC[x_0, x_1, x_2, x_3]/\left(H_{(s,t)}\right)
\]

\medskip

(5) By (2), each $V_{(s,t)}$ is lc for $(s,t)\in \CC^2\setminus\{(0,0)\}$ . Since $\sO_{
V(s,t)}(K_{V_{(s,t)}}) = \sO_{V_{(s,t)}}(1)$ is ample, $V_{(s,t)}$ is a stable surface with volume 
\[
K_{V_{(s,t)}}^2 = \frac{86}{6\cdot 11\cdot 25\cdot 43} =\frac{1}{825}.
\]
Thus we have a morphism $\Phi\colon\CC^2\setminus\{(0,0)\}\rightarrow M_{\frac{1}{825}}$, $(s,t)\mapsto V_{(s,t)}$ into the moduli space. By (3), $\Phi$ descends to a one-to-one morphism $\varphi\colon \PP^1\rightarrow M_{\frac{1}{825}}$. Since bijective morphisms between curves are homeomorphisms, the assertion (5) follows.
\end{proof}

\begin{cor}
    The surface $V$ constructed in Example~\ref{ex: 825} is isomorphic to the surface $V_{(0,1)}$ of Theorem~\ref{thm: WPS86}. As a consequence, $V$ can be embbeded into $\PP(6,11,25,43)$ as a hypersurface of degree $86$, with defining equation $x_3^2+ x_2^3x_1 + x_2x_1^5 x_0 +x_1^4 x_0^7 = 0$.
\end{cor}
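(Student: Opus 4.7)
The strategy is to compute the canonical ring $R(V, K_V)$ of the surface $V$ constructed in Example~\ref{ex: 825} well enough to exhibit $V$ as a degree-$86$ hypersurface in $\PP(6,11,25,43)$, and then invoke Theorem~\ref{thm: WPS86} to pin down $V\cong V_{(0,1)}$.

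First I would record the local structure inherited from Example~\ref{ex: 825}: $V$ has exactly three singular points, namely a non-klt lc point $P_0$ with Cartier index $6$ for $K_V$, a cyclic quotient $P_1$ of type $\frac{1}{22}(1,13)$ with canonical index $11$, and a cyclic quotient $P_2$ of type $\frac{1}{25}(1,3)$ with canonical index $25$. Since $K_V$ is ample, Kawamata--Viehweg vanishing gives $h^i(V, mK_V)=0$ for $i>0,\, m\geq 2$, so the plurigenera $P_m(V)=h^0(V, mK_V)=\chi(V, mK_V)$ can be computed via Blache's singular Riemann--Roch~\cite{Bla95} from $K_V^2=\frac{1}{825}$, $\chi(\sO_V)$, and the explicit local corrections at $P_0, P_1, P_2$. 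Matching the generating function $\sum_m P_m(V)\, t^m$ against
\[
\frac{1-t^{86}}{(1-t^6)(1-t^{11})(1-t^{25})(1-t^{43})},
\]
I would exhibit generators $x_0, x_1, x_2, x_3$ of $R(V, K_V)$ in degrees $6, 11, 25, 43$ respectively (the low-degree ones forced by the canonical indices at the three singularities, the degree-$43$ one appearing as the first ``new'' section) and verify that their only relation is a single weighted-homogeneous polynomial of degree $86$. This identifies $V=\mathrm{Proj}\, R(V, K_V)$ with a hypersurface of degree $86$ in $\PP(6,11,25,43)$.

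By Theorem~\ref{thm: WPS86}(1), this hypersurface is projectively equivalent to some $V_{\vec{\epsilon},(s,t)}$. Since $V$ is lc, Theorem~\ref{thm: WPS86}(2) forces $\vec{\epsilon}=(1,0,1,1)$ and $(s,t)\neq(0,0)$. Because $P_0\in V$ is a non-klt singularity (inherited from the coefficient-$1$ component $L_0^{\widetilde V}$ in Example~\ref{ex: 825}, which becomes an lc center after taking the ample model), the surface $V$ is not klt, so Theorem~\ref{thm: WPS86}(2b) forces $s=0$. An admissible diagonal rescaling then yields $V\cong V_{(0,1)}$, whose defining equation is precisely $x_3^2+x_2^3x_1+x_2x_1^5x_0+x_1^4x_0^7=0$. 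The main obstacle is the Riemann--Roch bookkeeping at the index-$6$ non-Cartier point $P_0$: the local corrections must be tracked precisely to ensure that the canonical ring is generated in exactly the degrees $6, 11, 25, 43$ and that the kernel of the natural map from the weighted polynomial ring is generated by one element of degree $86$, rather than, say, two smaller relations or a single larger one.
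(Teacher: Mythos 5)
Your approach is genuinely different from the paper's and logically independent of it. The paper's proof is a one-liner: both $V$ (from Example~\ref{ex: 825}) and $V_{(0,1)}$ (from Theorem~\ref{thm: WPS86}) are stable surfaces with non-empty non-klt locus and volume $\frac{1}{825}$, so the uniqueness assertion of Theorem~\ref{thm: 825} immediately forces $V\cong V_{(0,1)}$. Your route instead sidesteps Theorem~\ref{thm: 825} entirely: you propose to compute the canonical ring $R(V,K_V)$ directly, exhibit $V$ as a degree-$86$ hypersurface in $\PP(6,11,25,43)$, and then run the classification of Theorem~\ref{thm: WPS86}(1)--(3) to pin down $V\cong V_{(0,1)}$. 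The second half of your argument (once the hypersurface embedding is granted) is correct and efficient: lc-ness forces $\vec\epsilon=(1,0,1,1)$ and $(s,t)\neq(0,0)$, non-klt-ness forces $s=0$, and a diagonal rescaling then gives $V_{(0,1)}$.

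The genuine gap, which you partially flag yourself, is in the first half. Matching plurigenera against the power series $\frac{1-t^{86}}{(1-t^6)(1-t^{11})(1-t^{25})(1-t^{43})}$ only controls the Hilbert function of $R(V,K_V)$; it does not by itself establish that the ring is \emph{generated} by four elements in degrees $6,11,25,43$. That generation statement is the crux of the matter, and it requires either an explicit study of the multiplication maps on pluricanonical sections or some structural input beyond dimension counts (for instance a hyperplane-section or Koszul-type argument). Once generation is granted, the Hilbert series does force the kernel of $\CC[x_0,x_1,x_2,x_3]\to R(V,K_V)$ to be principal of degree $86$, so your concern about ``two smaller relations or a single larger one'' resolves itself; but generation is precisely what cannot be read off from Blache's Riemann--Roch bookkeeping alone. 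The paper explicitly acknowledges that the Riemann--Roch computation only suggested that $V$ is ``possibly'' a hypersurface of $\PP(6,11,25,43)$, and the rigorous confirmation goes through the uniqueness of Theorem~\ref{thm: 825} rather than through the canonical ring. So your plan could in principle be carried out, but as written it has not resolved the key step, whereas the paper's argument is essentially free once Theorem~\ref{thm: 825} is in hand.
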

\begin{proof}
The surfaces $V$ and $V_{(0,1)}$ are both stable surfaces with non-klt locus and volume $\frac{1}{825}$. By the uniqueness part of Theorem~\ref{thm: 825}, we have $V\cong V_{(0,1)}$.
\end{proof}

\subsection{Proof of the main theorem}\label{sec: pf main thm}

Now we turn to the proof of Theorem~\ref{thm: 825}, which is  divided into several steps.

\medskip

\noindent\textbf{Step 1}. In this step, we introduce two birational models $Y$ and $Z$ of $X$, on which we can decrease the number of lc places related to $K_X$ in a managable way. They sit in the following diagram:
\[
\begin{tikzcd}
    &Y\arrow[ld, "f"']\arrow[rd, "g"]& \\
   X&& Z
\end{tikzcd}
\]
Let $\{x_1,\dots,x_n\}$ be the set of lc centers of $X$. By Lemma \ref{lem: e^2 and -1/3}, we can take a projective birational morphism $f: Y\rightarrow X$ extracting only divisors with log discrepancy 0. Morever, setting $E_i=f^{-1}(x_i)$ for $1\leq i\leq n$ and $E=\sum E_i$, we may assume that the following holds.
\begin{enumerate}
    \item $Y$ is klt.
    \item $-E$ is ample$/X$.
    \item $E_i^2\leq -\frac{1}{3}$ for each $1\leq i\leq n$. Moreover, $E_i^2=-\frac{1}{3}$ if and only if the dual graph $\mathcal{D}(X\ni x_i)$ is as in Table \ref{table: two special singularity} of Lemma~\ref{lem: e^2 and -1/3}, and $E_i$ is the image of the unique fork of $\mathcal{D}(X\ni x_i)$.
\end{enumerate}
It follows that $f^*K_X= K_Y+E$, and
\[
E^2=\sum_{i=1}^nE_i^2\leq -\frac{1}{3},
\]
and equality holds if and only if $n=1$, $E=E_1$, $\mathcal{D}(X\ni x_1)$ is as in Table \ref{table: two special singularity} of Lemma~\ref{lem: e^2 and -1/3}, and $E_1$ is the image of the unique fork of $\mathcal{D}(X\ni x_1)$.

Now that $K_Y+E$ is big and nef, we may define 
$$t:=\inf\{s\geq 0\mid K_Y+sE\text{ is nef}\}.$$
Since $K_X$ is ample and $-E$ is ample$/X$, we have $t<1$. Since $(K_Y+E)\cdot E=0$, we have
\begin{equation}\label{eq: bound vol 1}
    K_X^2=(K_Y+E)^2=(K_Y+tE)^2+(1-t)^2(-E^2)\geq (1-t)^2(-E^2)\geq\frac{1}{3}(1-t)^2.
\end{equation}
We may assume that 
\[
1-t\leq\sqrt{\frac{1}{275}}<\frac{1}{16},
\]
since otherwise we obtain $K_X^2> \frac{1}{825}$ from \eqref{eq: bound vol 1}.  In particular, $t>0$. Since $(Y,tE)$ is lc and $K_Y+tE$ is nef, $K_Y+tE$ is semi-ample and hence defines a contraction $g: Y\rightarrow Z$ such that  $K_Y+tE\equiv_Z0$. Since $(K_Y+tE)\cdot E>0$,  $E$ is horizontal$/Z$. Since $t>\frac{15}{16}$, $g$ is birational by \cite[Corollary 7.1]{LS23}. Set $E_Z:=g_*E$. Then $K_Z+tE_Z$ is ample.

\medskip

\noindent\textbf{Step 2}. In this step we study the structure of $(Z,E_Z)$ by proving the following two claims.

\begin{claim}\label{claim: K+E positive}
The exceptional loci $\Exc(f)$ and $\Exc(g)$ do not have any common component.    For each prime divisor $C\subset \Exc(g)$, both of $(K_Y+E)\cdot C$ and $E\cdot C$ are positive.
\end{claim}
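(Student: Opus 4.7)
The plan is to exploit the identity $f^{*}K_{X}=K_{Y}+E$ together with the two numerical facts produced in Step~1: $-E$ is $f$-ample, and every $g$-exceptional curve $C$ satisfies $(K_{Y}+tE)\cdot C=0$ with $t<1$. I will handle the two assertions in the order given.

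First I will show no prime divisor $C$ lies in both $\Exc(f)$ and $\Exc(g)$. Such a $C$ would be a component of the reduced divisor $E$, and since $C$ is $f$-exceptional we would have $(K_{Y}+E)\cdot C = f^{*}K_{X}\cdot C=0$. Being $g$-exceptional, we would also have $(K_{Y}+tE)\cdot C=0$. Subtracting these two equalities gives $(1-t)E\cdot C=0$, hence $E\cdot C=0$ because $t<1$. But the $f$-ampleness of $-E$ forces $E\cdot C<0$ for every $f$-exceptional curve $C$, a contradiction.

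For the second assertion, take any prime divisor $C\subset\Exc(g)$. By the previous paragraph $C$ is not $f$-exceptional, so $f_{*}C$ is a nonzero effective curve on $X$; ampleness of $K_{X}$ then yields
\[
(K_{Y}+E)\cdot C \;=\; f^{*}K_{X}\cdot C \;=\; K_{X}\cdot f_{*}C \;>\; 0.
\]
Writing $K_{Y}+E=(K_{Y}+tE)+(1-t)E$ and using $(K_{Y}+tE)\cdot C=0$ gives
\[
(1-t)\,E\cdot C \;=\; (K_{Y}+E)\cdot C \;>\;0,
\]
and since $1-t>0$ we conclude $E\cdot C>0$ as desired.

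This is essentially a bookkeeping argument with no serious obstacle: the only thing that must be checked is that the two numerical conditions ($f$-ampleness of $-E$ and the definition of the $g$-contraction) combine with $t<1$ to prevent any overlap between $\Exc(f)$ and $\Exc(g)$, after which positivity of $(K_{Y}+E)\cdot C$ and $E\cdot C$ for $C\subset\Exc(g)$ is immediate from the ampleness of $K_{X}$.
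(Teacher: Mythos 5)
Your proof is correct and follows essentially the same route as the paper: both use $f^{*}K_{X}=K_{Y}+E$, the $f$-ampleness of $-E$, and the vanishing $(K_{Y}+tE)\cdot C=0$ for $g$-exceptional $C$. The only cosmetic difference is that you argue the first part by contradiction, whereas the paper directly computes $(K_{Y}+tE)\cdot C=(t-1)E\cdot C>0$ for $C\subset\Exc(f)$ and contrasts it with the vanishing on $\Exc(g)$; the second part is identical.
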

\begin{proof}[Proof of the claim.]
    For each component $C\subset \Exc(f)$, we have 
    \[
    (K_Y+tE)\cdot C = (K_Y+E)\cdot C + (t-1)E\cdot C =(t-1)E\cdot C >0
    \]
    where the last inequality is because of the relative ampleness of $-E$ over $X$. On the other hand, the components $C'$ of $\Exc(g)$ is characterized by the property $(K_Y+tE)\cdot C'=0$. Therefore, $C\neq C'$, and it follows that $\Exc(f)$ and $\Exc(g)$ do not have any common component. 
    
    For a component $C'\subset \Exc(g)$, we have
    \[
    (K_Y+E)\cdot C' = K_X\cdot f_*C'>0
    \]
   Combined with the fact that $(K_Y+tE)\cdot C'=0$, we obtain $E\cdot C'>0$. 
\end{proof}

\begin{claim}\label{claim: basic property z}
    We have the following:
    \begin{enumerate}
        \item $Z$ is klt.
        \item $(Z,E_Z)$ is lc.
        \item $K_Z+E_Z$ is big and nef.
        \item $(K_Z+E_Z)\cdot E_Z>0$.
        \item $(K_Z+E_Z)\cdot D>0$ for any curve $D\not\subset E_Z$.
    \end{enumerate}
\end{claim}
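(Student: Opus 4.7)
Proof plan:

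My plan is to set up and exploit the formula
\[ g^*(K_Z+E_Z) \;=\; K_Y + E + (1-t)M, \qquad M := g^*E_Z - E. \]
By Claim~\ref{claim: K+E positive}, the divisors $E$ and $\Exc(g)$ share no common components, so $E$ is the strict transform of $E_Z$ under $g$ and $M$ is supported on $\Exc(g)$. Moreover $M\cdot C' = g^*E_Z\cdot C' - E\cdot C' = -E\cdot C' < 0$ for each $g$-exceptional component $C'$, by Claim~\ref{claim: K+E positive} again, so the negativity lemma yields $M\geq 0$.

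Given this identity, items (1), (3), (4) and (5) are routine. For (1), the pair $(Y,tE)$ is klt (since $Y$ is klt and $t<1$) and $g$ is $(K_Y+tE)$-trivial, so the crepantly contracted pair $(Z,tE_Z)$ is klt, whence so is $Z$. For (3), the projection formula applied to the strict transform $\tilde D\subset Y$ of any irreducible curve $D\subset Z$ gives
\[ (K_Z+E_Z)\cdot D = (K_Y+E)\cdot\tilde D + (1-t)M\cdot\tilde D \;\geq\; 0,\]
since $K_Y+E=f^*K_X$ is nef and $M\geq 0$ with $\tilde D\not\subset\Exc(g)$. Meanwhile $g_*M=0$, combined with negative-definiteness of the intersection matrix of $\Exc(g)$, gives
\[ (K_Z+E_Z)^2 = (K_Y+E)^2 - (1-t)^2 M^2 \;\geq\; K_X^2 \;>\; 0, \]
so $K_Z+E_Z$ is big. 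Item (4) reduces to $(1-t)M\cdot E>0$ via $(K_Y+E)\cdot E=0$ and Claim~\ref{claim: K+E positive}, and item (5) to $K_X\cdot f_*\tilde D + (1-t)M\cdot\tilde D>0$, which is positive since $\tilde D$ lies in neither $E$ nor $\Exc(g)$ while $K_X$ is ample.

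The substantive step, and the one I expect to be hardest, is (2): the lc-ness of $(Z,E_Z)$. By the identity this is equivalent to $(1-t)\mult_{C'}M\leq 1$ for every $g$-exceptional component $C'$. I would work one connected component $F=\bigcup_j C'_j$ of $\Exc(g)$ at a time. From $(K_Y+tE)\cdot C'_j=0$ and adjunction on the klt surface $Y$, each $C'_j$ is a smooth rational curve satisfying $(C'_j)^2 + K_Y\cdot C'_j = -2+\delta_j$ with $\delta_j\geq 0$ encoding the contribution of the singular points of $Y$ lying on $C'_j$; combined with $K_Y\cdot C'_j=-tE\cdot C'_j$, this bounds $E\cdot C'_j$ in terms of $t$ and the local structure at the singularities. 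Feeding these bounds into the linear system $M\cdot C'_j=-E\cdot C'_j$ and using the negative-definiteness of the intersection matrix of $F$ controls $\mult_{C'_j}M$. The hypothesis $1-t<1/16$ from Step~1 is crucial here, being exactly what allows the bound to be pushed across the various configurations of $F$ that arise from the classification of $(K+B)$-trivial contractions on klt surfaces.
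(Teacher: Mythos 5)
Your arguments for parts (1), (3), (4), and (5) are essentially the same as the paper's, modulo superficial rephrasing (the paper writes $g^*(K_Z+E_Z)=K_Y+E+G$; your $G=(1-t)M$). These are fine.

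The crucial divergence is in (2), and there you have both a different approach and a genuine gap. The paper disposes of (2) in one line by citing the known log canonical threshold gap for surfaces: since $(Z,tE_Z)$ is klt with $t>\frac{15}{16}$, the results of Kuwata \cite{Kuw99} and Prokhorov \cite[Corollary 3.3]{Pro02} force $\lct(Z,0;E_Z)\geq 1$, so $(Z,E_Z)$ is lc. You instead propose to re-derive this from scratch by bounding the multiplicities of $M$ directly. Two problems. First, your claimed reduction is incorrect: $(Z,E_Z)$ being lc is \emph{not} equivalent to $(1-t)\mult_{C'}M\leq 1$ for all $g$-exceptional $C'$. Since $g^*(K_Z+E_Z)=K_Y+E+(1-t)M$, the condition is that the pair $\bigl(Y,\,E+(1-t)M\bigr)$ be lc, and on a klt surface a boundary with all coefficients at most $1$ can easily fail to be lc (already on $\mathbb A^2$, three lines through a point with coefficient $\frac{2}{3}$ each is not lc). You must also control the local configuration at points where $E$ meets $\Supp M$ and at the singular points of $Y$ lying on $E\cup\Supp M$; this is exactly the content that the cited gap theorems package up. Second, even granting a corrected reduction, you never actually carry out the case analysis you describe --- you explicitly defer it as the hardest step --- so the proposal does not constitute a proof of (2). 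If you want to avoid citing Kuwata/Prokhorov, you would essentially be reproving a special case of their gap theorem, which is a substantially longer route than the paper's.
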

\begin{proof}[Proof of the claim.]
    (1) Since $(Y,tE)$ is klt and $K_Y+tE=g^*(K_Z+tE_Z)$, we infer that $(Z,tE_Z)$ and hence $Z$ is klt.

(2) Since $t>\frac{15}{16}$ and $(Z,tE_Z)$ is lc, by \cite{Kuw99}, \cite[Corollary 3.3]{Pro02}, $(Z,E_Z)$ is also lc.

(3) Since $K_Y+E$ is big and nef, $K_Z+E_Z=g_*(K_Y+E)$ is big and nef. 

(4) By Claim~\ref{claim: K+E positive} and the negativity lemma, we have $g^*(K_Z+E_Z) = K_Y+E+ G$, where $\Supp(G)=\Exc(g)$. Therefore,
\[
(K_Z+E_Z)\cdot E_Z=g^*(K_Z+E_Z) \cdot E= (K_Y+E+G)\cdot E =G\cdot E.
\]
By Claim \ref{claim: K+E positive}, $G\cdot E_Z>0$. Thus $(K_Z+E_Z)\cdot E_Z>0$.

(5) For any curve $D\not\subset E_Z$, since $K_Z+tE_Z$ is ample, we have
$$(K_Z+E_Z)\cdot D\geq (K_Z+tE_Z)\cdot D>0.$$
\end{proof}

\noindent\textbf{Step 3}. In this step, we prove the lower bound $K_X^2\geq \frac{1}{825}$, and provide necessary conditions for the equality case.

By Claim \ref{claim: basic property z}(1-3), $c:=\pet(Z,0;E_Z)$ is well-defined and satisfies $c<1$. By Theorem~\ref{thm: gap pet}, $c\leq\frac{10}{11}$ or $c=\frac{12}{13}$. We have
\begin{align*}
   (K_Y+E)^2&=(K_Y+tE)^2+(1-t)^2(-E^2)\\
   &=(K_Z+tE_Z)^2+(1-t)^2(-E^2)\\
   &=\left(\frac{t-c}{1-c}(K_Z+E_Z)+\frac{1-t}{1-c}(K_Z+cE_Z)\right)^2+(1-t)^2(-E^2)\\
   &\geq\left(\frac{t-c}{1-c}\right)^2(K_Z+E_Z)^2+(1-t)^2(-E^2).
\end{align*}
By Lemma \ref{lem: e^2 and -1/3},
$$(K_Y+E)^2\geq\left(\frac{t-c}{1-c}\right)^2(K_Z+E_Z)^2+\frac{1}{3}(1-t)^2,$$
and the inequality holds only if $n=1$, $E=E_1$, $\mathcal{D}(X\ni x_1)$ is as in Table \ref{table: two special singularity}, and $E_1$ is the image of the unique fork of $\mathcal{D}(X\ni x_1)$. There are two cases:

\medskip

\noindent\textbf{Case 1}. $c=\frac{12}{13}$. In this case, by Proposition \ref{prop: qfact 12/13 smallest volume}, $(K_Z+E_Z)^2\geq\frac{1}{260}$. Therefore,
$$(K_Y+E)^2\geq \frac{1}{260}\left(\frac{t-c}{1-c}\right)^2+\frac{1}{3}(1-t)^2=\frac{1}{260}(13t-12)^2+\frac{1}{3}(1-t)^2.$$
The quadratic function $$\frac{1}{260}(13t-12)^2+\frac{1}{3}(1-t)^2$$ has minimum $\frac{1}{767}$ (achieved at $t=\frac{56}{59}$). Therefore, $$(K_Y+E)^2\geq\frac{1}{767}>\frac{1}{825}$$ and we are done.

\medskip

\noindent\textbf{Case 2}. $c\leq\frac{10}{11}$. In this case, we let $\phi: Z\rightarrow T$ be the ample model of $K_Z+E_Z$ and let $E_T:=\phi_*E_Z$. Then $K_T+E_T$ is ample and $\phi^*(K_T+E_T)=K_Z+E_Z$. In particular, $(T,E_T)$ is lc. By Claim \ref{claim: basic property z}(4), $(K_Z+E_Z)\cdot E_Z>0$, so $E_Z$ is not contracted by $\phi$. Thus $E_T\not=0$.  By \cite [Theorem 1.4]{LS23}, $$(K_Z+E_Z)^2=(K_T+E_T)^2\geq\frac{1}{462}.$$ 
Therefore,
$$K_X^2=(K_Y+E)^2\geq \frac{1}{462}\left(\frac{t-c}{1-c}\right)^2+\frac{1}{3}(1-t)^2\geq\frac{1}{462}(11t-10)^2+\frac{1}{3}(1-t)^2,$$
and the equality holds only if $n=1$, $E=E_1$, $\mathcal{D}(X\ni x_1)$ is as in Table \ref{table: two special singularity} of Lemma~\ref{lem: e^2 and -1/3}, $E_1$ is the image of the unique fork of $\mathcal{D}(X\ni x_1)$, and $(K_Z+E_Z)^2=\frac{1}{462}$. By Theorem \ref{thm: 462}, the equality $(K_T+E_T)^2=\frac{1}{462}$ implies that $(T,E_T)$ is isomorphic to the surface pair $(U, B_U)$ presented in Example~\ref{ex: 462}. In particular, $(T,E_T)$ is plt.  By Claim \ref{claim: basic property z}(5), $(K_Z+E_Z)\cdot D>0$ for any prime divisor $D\not\subset\Supp E_Z$. Thus $\phi$ only contracts some irreducible components of $E_Z$. In particular, for any prime divisor $D$ contracted by $\phi$, we have
$$a(D,T,E_T)=a(D,Z,E_Z)=0.$$
Since $(T,E_T)$ is plt, $\phi$ is an isomorphism, and $(Z,E_Z)=(T,E_T)$.

The quadratic function $$\frac{1}{462}(11t-10)^2+\frac{1}{3}(1-t)^2$$ has minimum $\frac{1}{825}$ (achieved at $t=\frac{24}{25}$). Therefore, $K_X^2\geq\frac{1}{825}$, and equality holds only if
\begin{itemize}
    \item there is exactly one lc center $x_1\in X$,
    \item $\mathcal{D}(X\ni x_1)$ is as in Table \ref{table: two special singularity} of Lemma~\ref{lem: e^2 and -1/3},
    \item $E_1$ is the unique fork of $\mathcal{D}(X\ni x_1)$, 
    \item $(Z,E_Z)$ is isomorphic to the surface pair $(U, B_U)$ in Example~\ref{ex: 462}, and
    \item $t=\frac{24}{25}$.
\end{itemize}

\noindent\textbf{Step 4}. In this step, we show the uniqueness of $X$ with $K_X^2=\frac{1}{825}$.

We let $y_1,y_2,y_3$ be the three singular points of $Y$ on $E$, such that the order of $y_1$ (resp.~$y_2,y_3$) is $2$ (resp.~$3,\,6$), and let $z_i=g(y_i)$ be the image on $Z$. Since $g$ is $(K_Y+E)$-positive by Claim~\ref{claim: K+E positive}, we have the comparison $\ord(Z\ni z_i)\geq \ord(Y\ni y_i)$ for each $1\leq i\leq 3$ by Lemma~\ref{lem: index of positive contraction}. By Lemma \ref{lem: basic property log surface min vol}, $E_Z$ is a smooth rational curve, and the orders of $z_1,z_2,z_3$ are equal to $2,3,7$ respectively. By Lemma \ref{lem: index of positive contraction}, $\mathcal{D}(X\ni x_1)$ is of type 2 as in Table \ref{table: two special singularity}. Since $\ord(Z\ni z_i) = \ord(Y\ni y_i)$ for $i\in\{1,2\}$ and $\ord(Z\ni z_3)=\ord(Y\ni y_3)+1$, we know that $g$ is an isomorphism near $y_1,y_2$, and $g$ contracts exactly one rational curve $B$ through $y_3$ by Lemma~\ref{lem: index of positive contraction}. Since $g$ is $E$-positive, $g$ is an isomorphism outside the inverse image of $E_Z$.

Since $(Z,E_Z)$ is uniquely determined as in Example~\ref{ex: 462} and $a(B,Z,\frac{24}{25}E_Z)=1$, there are only finitely many possibilities of $B$. By an easy enumeration, we see that there is only one possible choice of $B$. Therefore, as the extraction of $B$ from $(Z, E_Z)$, the lc surface $(Y,E)$ is unique. It follows that $X$, the canonical model of $(Y, E)$, is unique, and hence  must be the surface $V$ constructed in Example \ref{ex: lc surface smallest volume}.
\qed

\subsection{Some variants and applications of the main results}
We provide several variants and applications of the main results, which might be useful in practice.

\begin{thm}\label{thm: general version lc pair vol min}
    Let $(X,B)$ be a projective lc surface pair such that $B$ is  reduced, $K_X+B$ is big and nef, and $(X,B)$ is not klt. Then $\vol(K_X+B)\geq\frac{1}{825}$, and equality holds if and only if its ample model is isomorphic to the surface $V$ constructed in Example \ref{ex: lc surface smallest volume}.
\end{thm}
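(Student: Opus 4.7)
The plan is to reduce to Theorem~\ref{thm: 825} by passing to the ample model of $(X,B)$, and then split into two cases according to whether the pushforward boundary vanishes.

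First, since $K_X+B$ is big and nef, the pair $(X,B)$ is of log general type, and the surface MMP produces an ample model $\pi\colon X \to \bar X$ with $\bar B = \pi_* B$, such that $(\bar X, \bar B)$ is lc, $K_{\bar X}+\bar B$ is ample, and by \eqref{eq: ample model} we have $\vol(K_X+B) = (K_{\bar X}+\bar B)^2$. The crucial observation is that non-kltness is inherited by $(\bar X, \bar B)$: since $K_X+B - \pi^*(K_{\bar X}+\bar B)$ is effective, for every prime divisor $E$ over $X$ we have $a(E,X,B) \leq a(E,\bar X,\bar B)$, so any lc place of $(X,B)$ remains an lc place of $(\bar X, \bar B)$, and therefore $(\bar X, \bar B)$ is not klt.

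Next, I would split into two cases according to whether $\bar B$ is zero. If $\bar B \neq 0$, then $(\bar X, \bar B)$ is a projective lc surface pair with nonzero reduced boundary and $K_{\bar X}+\bar B$ ample, so Theorem~\ref{thm: LS 462} gives the strict inequality $\vol(K_X+B) = (K_{\bar X}+\bar B)^2 \geq \frac{1}{462} > \frac{1}{825}$, and no equality case occurs in this branch. If $\bar B = 0$, then $\bar X$ is a stable surface that is not klt (by the inheritance above), so Theorem~\ref{thm: 825} applies directly, yielding $K_{\bar X}^2 \geq \frac{1}{825}$ with equality if and only if $\bar X$ is isomorphic to the surface $V$ of Example~\ref{ex: lc surface smallest volume}.

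Combining the two cases gives $\vol(K_X+B) \geq \frac{1}{825}$, and the equality case is forced into the second branch, where it is characterized by $\bar X \cong V$ (and automatically $\bar B = 0$). This amounts to the ample model of $(X,B)$ being isomorphic to $V$, as desired. The argument is essentially a reduction with no real obstacle; the only point requiring some care is the inheritance of non-kltness under $\pi$, which follows immediately from the effectivity of the discrepancy divisor $K_X+B - \pi^*(K_{\bar X}+\bar B)$.
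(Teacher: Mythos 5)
Your proposal follows the same route as the paper: pass to the ample model $\pi\colon X \to \bar X$ with $\bar B = \pi_*B$, then split by whether $\bar B = 0$, using Theorem~\ref{thm: LS 462} in one branch and Theorem~\ref{thm: 825} in the other. That reduction is correct.

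However, the step you single out as "the only point requiring some care" — the inheritance of non-kltness — is justified incorrectly. Effectivity of $(K_X+B) - \pi^*(K_{\bar X}+\bar B)$ does give $a(E, X, B) \leq a(E, \bar X, \bar B)$ for every divisor $E$, as you state, but this inequality goes the \emph{wrong} way: from $a(E,X,B)=0$ you only deduce $a(E,\bar X, \bar B) \geq 0$, which is no new information since $(\bar X,\bar B)$ is already known to be lc. In general, effectivity alone does not preserve non-kltness under passing to an ample model; singularities can genuinely improve. What rescues the step here is that $K_X+B$ is \emph{nef} as well as big: by abundance for lc surfaces $K_X+B$ is semi-ample, so $\pi$ is a morphism and $\pi^*(K_{\bar X}+\bar B)$ is the Zariski positive part of $K_X+B$; nefness forces the negative part to vanish, giving the equality $K_X+B = \pi^*(K_{\bar X}+\bar B)$. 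Hence $a(E,X,B) = a(E,\bar X,\bar B)$ for every $E$, and lc places (and the non-klt locus) genuinely persist. With that correction the argument is complete and coincides with the paper's.
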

\begin{proof}
Since $(X,B)$ is lc, $K_X+B$ is semi-ample. Let $f: X\rightarrow X'$ be the ample model of $K_X+B$ and let $B':=f_*B$. Then $\vol(K_X+B)=\vol(K_{X'}+B')$, $B'$ is an effective Weil divisor, $K_{X'}+B'$ is ample, and $(X',B')$ is not klt. If $B'\not=0$, then by \cite[Theorem 1.4]{LS23}, $$\vol(K_{X'}+B')\geq\frac{1}{462}>\frac{1}{825}$$ and we are done. Thus we may assume that $B'=0$. By Theorem \ref{thm: 825}, we have $\vol(K_{X'})\geq\frac{1}{825}$, and $\vol(K_{X'})=\frac{1}{825}$ if and only if $X'=V$ where $V$ is the surface as in Example \ref{ex: lc surface smallest volume}.
\end{proof}

Recall that $\KK^2:=\{K_X^2\mid X\in \sS\}$ and $\KK^2_\klt :=\{K_X^2\mid X\in \sS_\klt\}$. We introduce one more class of surface pairs 
\[
\sS(\{1\}):=\{(X, B)\mid (X,B)\text{ is a projective lc surface with $B$ reduced and $K_X+B$ ample}\},
\]
and the corresponding set of volumes
\[
\KK^2(\{1\}):=\{(K_X+B)^2\mid (X,B)\in \sS(\{1\})\},
\]
All of the three sets $\KK^2$, $\KK^2_\klt$, and $\KK^2(\{1\})$ are DCC sets (\cite[Theorem 8.2]{Ale94}, \cite[Theorem 1.3]{HMX14}), and so are their closure in $\RR$. In particular, each of them has a minimal accumulation point.
\begin{thm}\label{thm: smallest accu of surface vol general version}
The smallest accumulation points of $\KK^2_\klt$, $\KK^2$, and $\KK^2(\{1\})$ are all equal to $\frac{1}{825}$.
\end{thm}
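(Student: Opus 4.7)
The plan is to deduce the statement from Corollary~\ref{cor: min acc}, Theorem~\ref{thm: 825} and Theorem~\ref{thm: general version lc pair vol min}, combined with the DCC property of the three sets $\KK^2_\klt$, $\KK^2$, $\KK^2(\{1\})$ (noting that $\sS\hookrightarrow \sS(\{1\})$ via $X\mapsto (X,0)$, so $\KK^2_\klt\subset\KK^2\subset\KK^2(\{1\})$).

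For the \emph{lower bounds}, I would first observe that any accumulation point of $\KK^2_\klt$ is automatically one of the larger set $\KK^2$, so Corollary~\ref{cor: min acc} immediately gives $\geq\frac{1}{825}$ for $\KK^2_\klt$. For $\KK^2(\{1\})$, I would take an accumulation point $p=\lim v_n$ with pairwise distinct $v_n=(K_{X_n}+B_n)^2$ for $(X_n,B_n)\in \sS(\{1\})$; after passing to a subsequence, either (i) $B_n=0$ for all $n$, so $p$ is an accumulation point of $\KK^2$ and hence $\geq\frac{1}{825}$, or (ii) $B_n\neq 0$ for all $n$, in which case every $(X_n,B_n)$ is non-klt (since a reduced non-zero $B_n$ is itself an lc center), and Theorem~\ref{thm: general version lc pair vol min} forces $v_n\geq\frac{1}{825}$, whence $p\geq\frac{1}{825}$.

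For the matching \emph{upper bound}, I would start from Corollary~\ref{cor: min acc} and pick a sequence of pairwise distinct $v_n\in \KK^2=\KK^2_\klt\sqcup\KK^2_\nklt$ converging to $\frac{1}{825}$. The pivotal observation is that $\frac{1}{825}$ cannot be an accumulation point of $\KK^2_\nklt$: by Theorem~\ref{thm: 825} the value $\frac{1}{825}=\min\KK^2_\nklt$ is attained (uniquely by $V$), and a DCC subset of $\RR$ cannot accumulate at its own minimum, because any sequence of distinct elements converging to the minimum necessarily contains a strictly decreasing subsequence, contradicting DCC. Consequently only finitely many $v_n$ lie in $\KK^2_\nklt$, and after truncation the whole sequence lies in $\KK^2_\klt$, certifying that $\frac{1}{825}$ is an accumulation point of $\KK^2_\klt$. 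The inclusions $\KK^2_\klt\subset\KK^2\subset\KK^2(\{1\})$ then propagate this to the other two sets, and together with the lower bounds, finish the proof.

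The only genuinely non-formal ingredient is this DCC-exclusion step for $\KK^2_\nklt$, which relies crucially on the uniqueness clause of Theorem~\ref{thm: 825} (so that $\frac{1}{825}$ is attained only finitely many times in $\KK^2_\nklt$); everything else is bookkeeping with Corollary~\ref{cor: min acc} and Theorem~\ref{thm: general version lc pair vol min}.
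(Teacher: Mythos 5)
Your proof is correct, and for the lower bound on $\KK^2(\{1\})$ it takes a genuinely different and more elementary route than the paper. The paper invokes \cite[Theorem 1.1]{AL19b} to conclude that the smallest accumulation point $v''$ of $\KK^2(\{1\})$ is itself realized as the volume of a non-klt pair $(X,B)$ with $B$ reduced and $K_X+B$ ample, and then applies Theorem~\ref{thm: general version lc pair vol min} to that single pair. You avoid this deeper input entirely: you take a sequence $v_n\to p$ realized by pairs $(X_n,B_n)\in\sS(\{1\})$ and split by whether $B_n=0$ (so $p$ is an accumulation point of $\KK^2$ and Corollary~\ref{cor: min acc} applies) or $B_n\ne 0$ (so the pairs are automatically non-klt and Theorem~\ref{thm: general version lc pair vol min} bounds each $v_n$ directly). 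This self-contained case analysis is a nice simplification. For the upper bound, your argument and the paper's are essentially the same in substance: you both use DCC to rule out $\KK^2_\nklt$ from supplying the accumulating sequence. The paper says it directly (the sequence approaches $\frac{1}{825}$ from below, and any value $<\frac{1}{825}$ cannot lie in $\KK^2_\nklt$ by Theorem~\ref{thm: 825}), whereas you package the same fact as the general observation that a DCC set cannot accumulate at its own minimum; these are equivalent.

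One small inaccuracy worth fixing: you write $\KK^2=\KK^2_\klt\sqcup\KK^2_\nklt$, but the union of the two volume sets need not be disjoint — indeed Theorem~\ref{thm: WPS86} shows that $\frac{1}{825}$ lies in both. The decomposition $\sS=\sS_\klt\sqcup\sS_\nklt$ is disjoint at the level of surfaces, but the induced volume sets overlap. This does not affect your argument (you only use that each $v_n$ lies in at least one of the two sets), but the notation should be $\cup$, not $\sqcup$. Also, note that the uniqueness clause of Theorem~\ref{thm: 825} is not actually needed for your DCC-exclusion step: the attainment of the minimum is automatic for a nonempty DCC set, and a single real number is attained at most once in any set of reals, so $\frac{1}{825}$ being \emph{in} $\KK^2_\nklt$ already suffices; the uniqueness of the surface $V$ plays no role in this part.
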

\begin{proof}
   Let $v$ (resp.~$v'$ resp.~$v''$) be the smallest accumulation point of $\KK^2$ (resp.~$\KK^2_\klt$, resp.~$\KK^2(\{1\})$). Since $\KK^2_\klt\subset\KK^2\subset\KK^2(\{1\})$, it is clear that 
   \begin{equation}\label{eq: min acc}
       v'\geq v\geq v''.
   \end{equation} 

    By Corollary~\ref{cor: min acc}, $v=\frac{1}{825}$. By \cite[Theorem 1.1]{AL19b}, there exists a projective lc surface pair $(X,B)$ such that $B$ is a reduced divisor, $K_X+B$ is ample, and $(X,B)$ is not klt, such that $\vol(K_X+B)=v''$. By Theorem \ref{thm: general version lc pair vol min}, $v''\geq\frac{1}{825}$. Thus $v''=\frac{1}{825}$ by \eqref{eq: min acc} and the equality $v=\frac{1}{825}$.
    
    Since $v=\frac{1}{825}$ is an accumulation point of the DCC set $\KK^2$, there exists a sequence of projective lc surfaces $X_i\in \sS$ such that $K_{X_i}^2$ is strictly increasing, and 
    \[
\lim_{i\rightarrow+\infty}K_{X_i}^2=\frac{1}{825}.
    \]
     Since $K_{X_i}^2<\frac{1}{825}$ for each $i$, by Theorem \ref{thm: 825}, $X_i$ is klt for each $i$. Thus $\frac{1}{825}$ is an accumulation point of $\KK^2_\klt$, so $v'\leq\frac{1}{825}$. It follow from \eqref{eq: min acc} and the equality $v=\frac{1}{825}$ that $v'=\frac{1}{825}$.
\end{proof}

Theorem \ref{thm: 825} and Corollary~\ref{cor: min acc} also have implications for the classification of stable surfaces with volume less than $\frac{1}{825}$.

\begin{cor}\label{cor: finite <1/825}
For any positive rational number $v<\frac{1}{825}$, the following statements hold:
\begin{enumerate}
    \item $\KK^2_{\leq v}:=\{u\in \KK^2 \mid u\leq v\}$ is a finite set.
    \item $\sS_{\leq v}:=\{X\in \sS\mid K_X^2\leq v\}$ is a subset of $\sS_{\klt}$, and forms a bounded family of stable surfaces.
\end{enumerate}  
\end{cor}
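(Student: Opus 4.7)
The plan is to deduce both assertions directly from DCC for $\KK^2$ combined with the location of its smallest accumulation point at $\frac{1}{825}$, and Alexeev's boundedness theorem \cite{Ale94}; no new geometric input is needed.

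For (1), I would first observe that every accumulation point of $\KK^2_{\leq v}$ is an accumulation point of $\KK^2$, hence, by Corollary~\ref{cor: min acc}, must be at least $\frac{1}{825}$. But any accumulation point of $\KK^2_{\leq v}$ also satisfies $\leq v < \frac{1}{825}$, so $\KK^2_{\leq v}$ has no accumulation point in $\RR$. Since $\KK^2$ is a nonempty DCC set it has a positive minimum, so $\KK^2_{\leq v}$ is a bounded subset of $\RR$. A bounded subset of $\RR$ with no accumulation point is finite by Bolzano--Weierstrass, giving (1).

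For (2), the inclusion $\sS_{\leq v} \subseteq \sS_\klt$ is immediate from Theorem~\ref{thm: 825}: if $X \in \sS$ were not klt, then $K_X^2 \geq \min \KK^2_\nklt = \frac{1}{825} > v$, a contradiction. Boundedness then follows from Alexeev's theorem \cite{Ale94}, which asserts that stable surfaces with $K_X^2$ bounded above form a bounded family; alternatively, using (1), I would write
\[
\sS_{\leq v} = \bigsqcup_{u \in \KK^2_{\leq v}} \{X \in \sS \mid K_X^2 = u\}
\]
as a finite disjoint union of bounded families (Alexeev's boundedness in the fixed-volume case), which is automatically bounded. The argument is essentially formal given the inputs already assembled in the paper; the only point to be careful about is to cite Alexeev's boundedness in the correct form.
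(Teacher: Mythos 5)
Your proposal is correct and follows essentially the same route as the paper: Corollary~\ref{cor: min acc} rules out accumulation points of $\KK^2_{\leq v}$ (giving finiteness), Theorem~\ref{thm: 825} with the decomposition \eqref{eq: decompose S} gives the inclusion into $\sS_\klt$, and Alexeev's boundedness (for fixed or bounded-above volume) gives boundedness. The paper phrases (1) as a DCC set without accumulation points being finite, while you argue via Bolzano--Weierstrass on a bounded set — a cosmetic difference only.
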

\begin{proof}
By Corollary \ref{cor: min acc}, $\KK^2_{\leq v}$ is a DCC set of positive rational numbers without accumulation points and is therefore finite. The inclusion $\sS_{\leq v}\subset \sS_\klt$  follows directly from the assumption $v<\frac{1}{825}$, Theorem~\ref{thm: 825}, and the decomposition \eqref{eq: decompose S}. The boundedness of $\sS_{\leq v}$ follows from (1) and the fact that surfaces in $\sS$ with a fixed volume are bounded (\cite{Ale94, HMX18}).
\end{proof}

In view of the uniqueness assertion of Theorem \ref{thm: 825}, we propose the following question:
\begin{ques}
For any rational number $0<v<\frac{1}{825}$, is there, up to isomorphism, \emph{at most} one klt stable surface $X$ such that $K_X$ is ample and $K_X^2=v$? Note that any such surface must be rational (cf. \cite[Theorem 1.1]{Liu25}).
\end{ques}






\end{document}